\def\blfootnote{\xdef\@thefnmark{}\@footnotetext}
\newtheorem{thm}{Theorem}[section]
\newtheorem{thma}{Theorem}
\newtheorem{prop}[thm]{Proposition}
\newtheorem{lem}[thm]{Lemma}
\newtheorem{cor}[thm]{Corollary}
\theoremstyle{remark}
\newtheorem{rem}[thm]{Remark}
\newtheorem{quest}[thm]{Question}
\newtheorem{ex}[thm]{Example}
\theoremstyle{definition}
\newtheorem{defn}[thm]{Definition}
\newtheorem{conv}[thm]{Convention}
\newcommand{\R}{\mathbb{R}}
\renewcommand{\H}{\mathbb{H}}
\newcommand{\N}{\mathbb{N}}
\newcommand{\Ext}{\operatorname{Ext}}
\numberwithin{equation}{section}
\title{Transverse measures to infinite type laminations}
\author{Mladen Bestvina \and Alexander J. Rasmussen}
\date{}
\begin{document}

\maketitle

\begin{abstract}
We study the cone of transverse measures to a fixed geodesic lamination on an infinite type hyperbolic surface. Under simple hypotheses on the metric, we give an explicit description of this cone as an inverse limit of finite-dimensional cones. We study the problem of when the cone of transverse measures admits a base and show that such a base exists for many laminations. Moreover, the base is a (typically infinite-dimensional) simplex (called a \emph{Choquet simplex}) and can be described explicitly as an inverse limit of finite-dimensional simplices. We show that on any fixed infinite type hyperbolic surface, every Choquet simplex arises as a base for \emph{some} lamination. We use our inverse limit description and a new construction of geodesic laminations to give other explicit examples of cones with exotic properties.
\end{abstract}
	
\section{Introduction}

Geodesic laminations on infinite type surfaces are currently poorly understood. However, they promise to be valuable tools in the study of the mapping class groups and Teichm\"{u}ller theory of infinite type surfaces. As an example, understanding geodesic laminations would help to advance the study of hyperbolic graphs associated to infinite type type surfaces such as the \emph{ray graph} (\cite{ray},\cite{ray_boundary}). What is missing is a structure theory for laminations on infinite type surfaces.

Lacking such a structure theory, one may attempt to understand the \emph{ergodic theory} of infinite type laminations; i.e. the theory of transverse measures to infinite type laminations. Such a goal has been undertaken recently (\cite{thurston_boundary}, \cite{length_spectrum}, \cite{measured_lams}). Both the structure and the ergodic theory of laminations on finite type surfaces are well understood. Geodesic laminations on finite type surfaces consist of finitely many minimal sub-laminations together with finitely many isolated leaves (see e.g. \cite[Theorem I.4.2.8]{fundamentals}). The cone of transverse measures to a finite type lamination is a finite-dimensional simplicial cone. Its base is a simplex which embeds projectively into the \emph{Thurston boundary} of Teichm\"{u}ller space. Moreover, the space of all measured laminations on a fixed closed hyperbolic surface has a natural piecewise-linear structure. Our goals in this paper are to give a very explicit description of the cone of transverse measures to an infinite type lamination and to highlight similarities and differences with the finite type theory as well as connections with ergodic theory and functional analysis.

Fix a complete hyperbolic surface $X$ of infinite type, without boundary, and a geodesic lamination $\Lambda$ on $X$. We will assume that $X$ is \emph{of the first kind}, meaning that the limit set of $\pi_1(X)$ acting on the universal cover $\widetilde X$ is the entire circle $\partial \widetilde X$. A \emph{transverse measure} to $\Lambda$ assigns to each arc transverse to $\Lambda$ a finite Borel measure and these measures are invariant under isotopies respecting $\Lambda$. Transverse measures may be compared to \emph{invariant measures} of dynamical systems. The space of all transverse measures to $\Lambda$ has the structure of a topological convex cone with the \emph{weak${}^*$ topology}. We denote it by $\mathcal M(\Lambda)$. 

Our initial result gives a rough description of $\mathcal M(\Lambda)$. This will be refined momentarily into a much more explicit description of $\mathcal M(\Lambda)$ as an inverse limit of finite-dimensional cones. Here $\R_+=[0,\infty)\subset \R$.

\begin{thma}
\label{mainthm:subcone}
The cone $\mathcal M(\Lambda)$ is linearly homeomorphic to a closed sub-cone of the product $\R_+^\N$ cut out by countably many linear equations.
\end{thma}

\noindent This appears to be related to the main result of \cite{measured_lams}, which describes the cone of measured laminations carried by a train track via equations. However, Theorem \ref{mainthm:subcone} does not follow immediately from this.

To understand the cone $\mathcal M(\Lambda)$ better, we fix an \emph{exhaustion} of $X$, $X_1\subset X_2\subset \ldots$, by surfaces with geodesic boundary which are compact, minus finitely many punctures. The intersection $\Lambda \cap X_n$ consists of finitely many compact minimal sub-laminations contained in the interior of $X_n$, geodesics spiraling onto these minimal sub-laminations, plus some collection of proper arcs. Moreover, there are finitely many proper arcs in $\Lambda \cap X_n$ up to homotopy. We attach to each $X_n$ a finite-dimensional cone $C(X_n)$, which records all the transverse measures to the compact minimal sub-laminations, plus assignments of non-negative numbers to all the homotopy classes of proper arcs. There are natural \emph{transition maps} $\pi_n:C(X_{n+1})\to C(X_n)$ which record how the arcs and minimal sub-laminations in $X_{n+1}$ traverse those in $X_n$. This leads to our explicit description: 

\begin{thma}
\label{mainthm:inverselim}
The cone $\mathcal M(\Lambda)$ is linearly homeomorphic to the inverse limit of the cones $C(X_n)$ together with the transition maps $\pi_n$.
\end{thma}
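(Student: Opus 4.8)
The plan is to assemble the restriction maps into a single map $r\colon \mathcal{M}(\Lambda)\to\varprojlim C(X_n)$ and then exhibit an explicit inverse via measure extension. For each $n$, define $r_n\colon\mathcal{M}(\Lambda)\to C(X_n)$ by restricting a transverse measure $\mu$ to $\Lambda\cap X_n$: on each compact minimal sublamination of $\Lambda\cap X_n$ take the induced transverse measure, and to each homotopy class of proper arc assign the total $\mu$-mass of the leaves in that class, measured on a transversal dual to the class and contained in $X_n$. The defining property of the transition maps $\pi_n$ --- recording how the arcs and sublaminations of $X_{n+1}$ traverse those of $X_n$ --- is exactly what is needed to verify $\pi_n\circ r_{n+1}=r_n$, so by the universal property of the inverse limit the $r_n$ assemble into a single map $r$. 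Each $r_n$ is weak${}^*$-continuous, since every coordinate of $C(X_n)$ is either the evaluation of $\mu$ on a fixed transversal in $X_n$ or the restriction of $\mu$ to a compact sublamination, both of which depend continuously on $\mu$; hence $r$ is continuous.

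For injectivity, note that a transverse measure is determined by its values on compact transversals, and every compact transversal lies in some $X_n$. The mass $\mu$ assigns to any sub-arc of such a transversal is recovered from the data $r_m(\mu)$ for $m\geq n$: enlarging the subsurface subdivides each homotopy class into finer classes and pins down the distribution of mass along the transversal. Thus $\mu$ is determined by the sequence $(r_n(\mu))$, and $r$ is injective.

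The substantial step is surjectivity, where I would reconstruct a measure $\mu = s((c_n))$ from a compatible sequence $(c_n)\in\varprojlim C(X_n)$. Fix a compact transversal $\tau\subset X_n$. The leaves of $\Lambda$ meeting $\tau$ fall into finitely many homotopy classes, and passing to $X_{n+1},X_{n+2},\dots$ refines this into a nested sequence of finite partitions of $\tau\cap\Lambda$ generating its Borel $\sigma$-algebra. The entries of $c_n,c_{n+1},\dots$ --- the arc weights together with the transverse measures on the minimal sublaminations onto which leaves spiral --- assign a mass to each partition piece, and the relations $\pi_n(c_{n+1})=c_n$ say precisely that these masses are consistent under refinement. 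This yields a finitely additive premeasure on the algebra generated by the partitions, and a Carath\'eodory/Kolmogorov extension argument produces a Borel measure $\mu^\tau$ on $\tau$. One then checks that the family $\{\mu^\tau\}$ is additive under concatenation of transversals and invariant under isotopies respecting $\Lambda$, so that it defines an element of $\mathcal{M}(\Lambda)$ with $r(\mu)=(c_n)$.

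Linearity of $r$ and $s$ is immediate. For continuity of $s=r^{-1}$ it suffices to verify that each generating weak${}^*$ evaluation $\mu\mapsto\int_\tau f\,d\mu^\tau$ becomes, after composing with $s$, a continuous function of $(c_n)$; this holds because the integral is a uniform limit of finite sums of the partition masses above, each a continuous coordinate function on $\varprojlim C(X_n)$. I expect the main obstacle to be the surjectivity step: upgrading consistent finite-dimensional data to a genuinely countably additive transverse measure. The delicate points are (i) showing that the partitions coming from the exhaustion actually generate the full Borel structure on $\tau\cap\Lambda$, so that distinct measures are distinguished and the premeasure extends uniquely, and (ii) matching the compatibility relation $\pi_n(c_{n+1})=c_n$ to the additivity demanded of a premeasure --- in particular, confirming that the weight of an $X_n$-arc class equals the total weight of the finer $X_{n+1}$-classes and minimal-sublamination masses lying over it. Once this accounting is in place, the extension theorem and the invariance checks finish the argument.
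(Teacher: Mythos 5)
Your overall architecture (restriction maps $r_n$, injectivity by refining along the exhaustion, surjectivity by extending compatible finite data to a measure) parallels the paper's, but there is a genuine gap at the point both your injectivity and surjectivity arguments lean on. The coordinate of $C(X_m)$ attached to a homotopy class $\ell_i^m$ is the mass of a \emph{dual} transversal $\tau_i^m$ crossing each arc of the class exactly once; a given transversal $\tau$, by contrast, may cross some arcs of $\ell_i^m$ twice, others once, and others not at all (it can turn around inside the strip $A_i^m\times I_i^m$, or end in its interior). So the data $c_m$ does \emph{not} by itself ``assign a mass to each partition piece'' of $\tau\cap\Lambda$: the mass of the piece corresponding to $\ell_i^m$ is determined by the weight $b_i^m$ only once one knows that every arc of $\ell_i^m$ meets $\tau$ the \emph{same} number of times $E_i$, in which case the mass is $E_i b_i^m$. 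You flag the adjacent issue (do the partitions generate the Borel structure?) as delicate point (i), but the multiplicity problem is a separate and prior obstruction, and resolving it is the main technical content of the paper's proof: its Lemma \ref{lem:sameintnum} shows that for $m$ large enough the classes $\ell_i^m$ become fine enough to separate the last arc of a strip that $\tau$ crosses from the first arc it misses, so the intersection number stabilizes. That lemma is where the first-kind hypothesis enters (via Lemma \ref{lem:firstkindgeodesics}, distinct lifted geodesics eventually become non-homotopic rel $\partial_0\widetilde{X_m}$ because $\bigcup_n\widetilde{X_n}=\widetilde X$); your proposal never invokes this hypothesis, and without it the injectivity claim is simply not established.

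Separately, and less seriously, your surjectivity route differs from the paper's. The paper does not reconstruct measures by Carath\'eodory extension in its main proof: it factors $\Psi$ through the honest measure cones $\mathcal M(\Lambda_n)$, shows each $\Psi_n:\mathcal M(\Lambda_n)\to C_n$ is proper and surjective (point masses plus Banach--Alaoglu), and deduces surjectivity and properness of $\Psi$ from the fact that an inverse limit of non-empty compact Hausdorff spaces is non-empty; continuity of the inverse then comes for free since a proper map of metrizable spaces is closed. Your direct extension scheme is essentially the paper's later ``effectivizing'' section, where the approximate masses $w_m(\tau)$ (taken with the \emph{maximum} intersection number, precisely because of the multiplicity issue above) are shown to decrease and converge; it can be made to work, but it is the harder road and still requires the stabilization lemma to identify the limit with the correct premeasure.
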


\noindent The advantage of Theorem \ref{mainthm:inverselim} is that the finite-dimensional cones $C(X_n)$ and transition maps $\pi_n$ are easily computable in practice, so that the theorem gives a very explicit description of the cone $\mathcal M(\Lambda)$. As first examples, we construct a lamination with a single non-zero transverse measure up to scaling (Example \ref{ex:choquetexample3}), and another lamination with no non-zero transverse measures at all:

\begin{thma}
\label{mainthm:zeromeasure}
Let $X$ be a complete, infinite type hyperbolic surface of the first kind. Then there exists a geodesic lamination $\Lambda$ on $X$ that has no non-zero transverse measures.
\end{thma}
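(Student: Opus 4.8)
The plan is to construct $\Lambda$ explicitly as an inverse limit and use Theorem~\ref{mainthm:inverselim} to certify that no non-zero transverse measure survives. Let me think about how to engineer the obstruction.

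A transverse measure in $\mathcal{M}(\Lambda)$ corresponds to a coherent sequence $(\mu_n)$ with $\mu_n \in C(X_n)$ and $\pi_n(\mu_{n+1}) = \mu_n$. To kill all measures, I want each transition map $\pi_n$ to force mass to strictly decrease, so that the only coherent sequence is zero. Let me think about the simplest model for $C(X_n)$ and $\pi_n$.

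The cleanest approach is to avoid minimal sub-laminations entirely: build $\Lambda$ so that $\Lambda \cap X_n$ consists purely of proper arcs (no compact minimal pieces). Then $C(X_n) \cong \R_+^{k_n}$, one coordinate per homotopy class of proper arc, and $\pi_n$ is a non-negative integer matrix recording how each arc in $X_{n+1}$ crosses the arcs of $X_n$. The goal becomes purely combinatorial: design the surface and lamination so that the associated matrices $A_n$ (with $\pi_n = A_n$) admit no non-zero coherent backward sequence.

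Let me work out what "no coherent sequence" means. I need $(\mu_n)$ with $A_n \mu_{n+1} = \mu_n$ and all $\mu_n \geq 0$. Observe that an arc of $X_{n+1}$, being longer, must cross arcs of $X_n$: if I arrange that every proper arc in $X_{n+1}$ crosses some arc of $X_n$ a definite number of times, then there is some contraction. The issue is that $\ell^1$ mass can leak out to infinity. The key trick: design so that the total mass is forced to decrease by a fixed factor. Concretely, if each entry-column structure of $A_n$ guarantees that $\|\mu_n\|_1 = \|A_n \mu_{n+1}\|_1 \geq \lambda \|\mu_{n+1}\|_1$ with $\lambda > 1$ (because each arc upstairs crosses $\geq \lambda$ arcs downstairs counted with multiplicity), then $\|\mu_{n+1}\|_1 \leq \lambda^{-1}\|\mu_n\|_1 \leq \cdots \leq \lambda^{-(n+1)}\|\mu_1\|_1$. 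Since all $\mu_n$ come from the single bounded quantity $\|\mu_1\|_1$, pushing $n \to \infty$ forces $\|\mu_n\|_1 \to 0$ for every fixed truncation, hence $\mu_n = 0$ for all $n$, so the measure is zero.

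So the concrete construction I would pursue: take $X$ to be, say, the infinite-genus surface or a tree-like exhaustion where each $X_{n+1} \setminus X_n$ is an annulus or pants-type piece, and choose a lamination whose leaves are arcs that, upon entering $X_{n+1} \setminus X_n$, wrap enough to hit each arc of $X_n$ at least twice (or with definite total multiplicity $\geq 2$). I must verify such a geodesic lamination genuinely exists on a surface of the first kind — here I would invoke the new construction of geodesic laminations the introduction promises, realizing a prescribed combinatorial arc pattern as an honest geodesic lamination (this requires the metric to be chosen so that the arcs are realized geodesically and the spiraling behavior is controlled). The main obstacle is precisely this realization step: ensuring the combinatorial recipe for the transition matrices is realized by an actual geodesic lamination on a first-kind surface, and that $\Lambda \cap X_n$ contains no compact minimal sub-lamination (which would contribute its own transverse measures that the arc-crossing argument does not obviously kill). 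Once the arc-only structure and the uniform contraction $\lambda > 1$ are established, the inverse-limit argument giving $\mathcal{M}(\Lambda) = \{0\}$ is immediate from Theorem~\ref{mainthm:inverselim}.
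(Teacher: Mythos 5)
Your framework is the right one --- the paper also builds $\Lambda$ as the inverse limit of a directed system of arcs on punctured disks, with no compact minimal sub-laminations, and reads off $\mathcal M(\Lambda)$ from the transition matrices via Theorem~\ref{mainthm:inverselim} --- but the mechanism you propose for killing the measures does not work, and in fact provably cannot work. The contraction step is a non sequitur: from $\mu_n = A_n\mu_{n+1}$ with column sums $\geq \lambda > 1$ you get $\|\mu_{n+1}\|_1 \leq \lambda^{-n}\|\mu_1\|_1$, which bounds the \emph{later} masses in terms of the \emph{earlier} ones; it never forces any fixed $\mu_k$ to vanish. The $1\times 1$ system $A_n = (2)$ already shows this: the coherent sequences are exactly $\mu_{n+1} = 2^{-n}\mu_1$, and the inverse limit is $\R_+$, not $0$. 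Worse, your design requires every arc of $A_{n+1}$ to cross arcs of $A_n$ with positive multiplicity, which by induction forces every leaf of $\Lambda$ to meet $X_1$; then Corollary~\ref{cor:baseexistence} together with Remark~\ref{rem:choquetcriterion} and Lemma~\ref{lem:coneinverselim} shows $\mathcal M(\Lambda)$ has a \emph{nonempty} compact base (an inverse limit of nonempty compact simplices), so $\mathcal M(\Lambda)\neq 0$ for any nonempty lamination produced this way. Expansion of the transition matrices is not an obstruction to transverse measures --- minimal laminations realizing arbitrary Choquet simplices (Theorem~\ref{mainthm:choquetrealization}) are built from exactly such expanding systems.

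The missing idea is that the leaves must escape every compact subsurface, and the obstruction has to be projective rather than metric. In the paper's construction each stage introduces a new arc $\ell_{n+1}^{n+1}$ disjoint from $X_n$, so each $\pi_n$ has a zero last column, and the remaining entries are chosen (the lower-triangular matrices with $1$'s on the diagonal and $2$'s below) so that the columns of the compositions $\pi_{nm}$ converge \emph{projectively} to $(0,\ldots,0,1)$ as $m\to\infty$. Hence any point of the inverse limit has $n$-th coordinate in $0^{n-1}\times\R_+$, i.e.\ all weight concentrates on the arc that is about to escape, and that coordinate is annihilated by the previous transition map. Equivalently, each leaf $L_i$ is isolated once $L_1,\ldots,L_{i-1}$ are removed yet accumulates onto $L_{i+1}$, so no leaf can carry measure. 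Your realization worries (building an honest geodesic lamination from the combinatorial recipe on a first-kind surface) are legitimate but are handled by the paper's inverse-limit-lamination machinery; the real gap is the choice of transition data.
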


We next study the problem of when the cone $\mathcal M(\Lambda)$ admits a convex, compact cross section (a \emph{base}). We show that such bases do exist in many examples and are examples of \emph{Choquet simplices}. Choquet simplices are infinite-dimensional versions of finite-dimensional simplices, familiar from dynamics and functional analysis. As is well known, the space of invariant probability measures of a homeomorphism of a compact metric space is always a Choquet simplex. 

\begin{thma}
\label{mainthm:choquetbase}
Suppose that there is a compact subsurface of $X$ which intersects every leaf of $\Lambda$. Then $\mathcal M(\Lambda)$ has a base which is a compact metrizable Choquet simplex. Further, there is an exhaustion $X_1\subset X_2\subset \ldots$ of $X$ for which this Choquet simplex is the inverse limit of bases of the cones $C(X_n)$ with the restrictions of the maps $\pi_n$.
\end{thma}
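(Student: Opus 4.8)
The plan is to produce the base directly from the hypothesis, verify compactness and metrizability using Theorem \ref{mainthm:subcone}, and then identify the base as an inverse limit of finite-dimensional simplices using Theorem \ref{mainthm:inverselim}, so that the Choquet property follows from a standard characterization.

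First I would fix an exhaustion $X_1\subset X_2\subset\cdots$ with the given compact subsurface $K$ contained in $X_1$, so that $K\subset X_n$ for all $n$, and choose a finite system of arcs $A\subset K$, transverse to $\Lambda$, which meets every leaf of $\Lambda$ passing through $K$; since $K$ meets every leaf, $A$ meets every leaf of $\Lambda$. Define the continuous linear functional $L(\mu)\defeq\sum_{\alpha\in A}\mu(\alpha)$. If $\mu\neq 0$ then $\mu$ charges some transverse arc, hence charges a leaf, and that leaf crosses $A$; transverse invariance then forces $L(\mu)>0$. Thus $L$ is strictly positive on $\mathcal M(\Lambda)\setminus\{0\}$, and $B\defeq\{\mu\in\mathcal M(\Lambda):L(\mu)=1\}$ meets every ray of the cone exactly once, i.e.\ is a candidate base.

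Next I would prove that $B$ is compact and metrizable. Using the embedding of Theorem \ref{mainthm:subcone}, $\mathcal M(\Lambda)$ is a closed subcone of $\R_+^\N$, whose coordinates are the $\mu$-masses of a countable family of transversals $\tau_i$; metrizability of $B$ is then immediate as a subspace of $\R_+^\N$. For compactness, by Tychonoff it suffices to bound each coordinate on $B$, i.e.\ to establish a comparison estimate $\mu(\tau_i)\le C_i\,L(\mu)$ valid for all $\mu\in\mathcal M(\Lambda)$. This is the technical heart of the compactness claim: working in a compact piece $X_m$ containing $\tau_i\cup K$, one shows that a segment of a leaf can cross $\tau_i$ only a bounded number $N_i$ of times between consecutive crossings of $A$ --- otherwise a no-bigon and limiting argument would produce a leaf entirely missing $K$, contradicting the hypothesis --- and integrating this bound against $\mu$ yields $\mu(\tau_i)\le N_i\,L(\mu)$. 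Hence $B$ lies in the compact box $\prod_i[0,N_i]$ and, being closed, is compact.

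Finally I would exhibit $B$ as an inverse limit of simplices. By Theorem \ref{mainthm:inverselim}, $\mathcal M(\Lambda)\cong\varprojlim(C(X_n),\pi_n)$, and each $C(X_n)$ is a simplicial cone, being a product of the (finite-type, hence simplicial) transverse-measure cones of the compact minimal sub-laminations with a copy of $\R_+$ for each proper-arc class. Because $A\subset K\subset X_n$ for every $n$, the functional $L$ descends to a functional $\ell_n$ on $C(X_n)$ with $\ell_n\circ\pi_n=\ell_{n+1}$ and $L=\ell_n\circ p_n$. The key point --- and the main obstacle --- is to choose the exhaustion so that every transition map $\pi_n$ is injective on the cone $C(X_{n+1})$, equivalently so that no essential arc or sub-lamination of $\Lambda$ is hidden in a collar $X_{n+1}\setminus X_n$ and invisible in $X_n$; here the hypothesis that $K$ meets every leaf is exactly what prevents laminar structure from escaping to infinity. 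Granting this, each $\ell_n$ is strictly positive on $C(X_n)$, so $B_n\defeq\{\ell_n=1\}\cap C(X_n)$ is a finite-dimensional simplex, the compatibility $\ell_n\circ\pi_n=\ell_{n+1}$ gives $\pi_n(B_{n+1})\subseteq B_n$, and since $\ell_n(x_n)=\ell_{n+1}(x_{n+1})$ along any coherent sequence we obtain $B\cong\varprojlim(B_n,\pi_n|_{B_{n+1}})$ with affine bonding maps. As an inverse limit of finite-dimensional simplices under affine maps, $B$ is a metrizable Choquet simplex (Lazar--Lindenstrauss), which is the desired conclusion. I expect the cone-injectivity of the transition maps, i.e.\ the correct choice of exhaustion making the $B_n$ genuine bases, to be the chief difficulty, with the uniform crossing estimate of the compactness step the other essential lemma.
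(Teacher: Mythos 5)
Your overall strategy coincides with the paper's: realize $\mathcal M(\Lambda)$ as $\varprojlim(C(X_n),\pi_n)$, cut out a base $B_n$ in each finite-dimensional simplicial cone, and invoke the fact that an inverse limit of finite-dimensional simplices under affine bonding maps is a metrizable Choquet simplex (the paper quotes the Davies--Vincent-Smith theorem, Theorem \ref{thm:choquetinverselim}, for this direction; Lazar--Lindenstrauss is used in the paper only for the converse realization statement). Your normalizing functional $L$ and the crossing estimate $\mu(\tau_i)\le N_i\,L(\mu)$ are serviceable, though the paper obtains compactness more cheaply (Corollary \ref{cor:baseexistence}) and defines $B_n$ as $\pi_{1n}^{-1}(B_1)$ rather than as a level set of a descended functional; these differences are cosmetic.

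The genuine gap is the step you explicitly defer with ``Granting this.'' The hypothesis that a compact $K$ meets every \emph{leaf} of $\Lambda$ does not, for an arbitrary exhaustion, imply that every \emph{arc} of $\Lambda\cap X_{n+1}$ (that is, every component of a leaf intersected with $X_{n+1}$) meets $X_n$: a leaf can cross $K$, exit $X_n$, and re-enter $X_{n+1}$ in a component lying entirely in $X_{n+1}\setminus X_n$. Such a component contributes a coordinate ray of $C(X_{n+1})$ that is killed by $\pi_n$, and on which your $\ell_{n+1}$ vanishes, so $\{\ell_{n+1}=1\}$ misses that ray and is not a base of $C(X_{n+1})$. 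This failure is not excluded by the hypothesis alone --- it is a property that must be engineered into the exhaustion, and doing so is the technical heart of the paper's proof. The paper's Lemma \ref{lem:exhaustionmodification} accomplishes it: given $U\subset V$ with every leaf of $\Lambda$ meeting $U$, one enlarges $V$ to $W$ by attaching, along a finite cover of $\Lambda\cap\partial V$, strips that follow a leaf segment until it first returns to $U$, then capping off inessential boundary components and straightening to geodesic boundary; every geodesic of $\Lambda\cap W$ then meets $U$. Iterating yields an exhaustion $Y_1\subset Y_2\subset\cdots$ in which every arc of $\Lambda\cap Y_i$ meets $Y_1$, which is exactly the cone-injectivity $\pi_n(w)=0\Rightarrow w=0$ that you need (Remark \ref{rem:choquetcriterion} combined with Lemma \ref{lem:coneinverselim}). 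Without an argument of this kind your proof establishes that $\mathcal M(\Lambda)$ has a compact base, but not that the base is an inverse limit of simplices, and hence not the Choquet simplex conclusion.
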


\noindent In particular, this theorem applies to any \emph{minimal} lamination. Choquet simplices can have exotic spaces of extreme points. For example, in Example \ref{ex:choquetexample1} the space of extreme points is homeomorphic to the ordinal $\omega+1$. In Example \ref{ex:choquetexample2} the space of extreme points is not closed. An even more exotic example is the \emph{Poulsen simplex} (\cite{poulsen_simplex}), which has a dense set of extreme points. Our next results show that cones of transverse measures can be arbitrarily strange. Namely, there is no obstruction whatsoever to the Choquet simplex that can appear as a base:

\begin{thma}
\label{mainthm:choquetrealization}
Let $X$ be a complete, infinite type hyperbolic surface of the first kind. Let $\Delta$ be a compact metrizable Choquet simplex. Then there exists a minimal geodesic lamination $\Lambda$ on $X$ for which the cone $\mathcal M(\Lambda)$ has a base which is affinely homeomorphic to $\Delta$.
\end{thma}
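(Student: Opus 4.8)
The plan is to combine the explicit inverse-limit descriptions in Theorems~\ref{mainthm:inverselim} and \ref{mainthm:choquetbase} with the classical representation of metrizable Choquet simplices as inverse limits of finite-dimensional simplices, and then to realize the resulting combinatorial data by an honest minimal geodesic lamination. This mirrors the way arbitrary Choquet simplices arise as invariant-measure simplices of minimal (e.g. Bratteli--Vershik) systems, and the geometric construction plays the role of the Bratteli diagram.

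\textbf{Step 1 (represent $\Delta$).} By the Lazar--Lindenstrauss theorem every compact metrizable Choquet simplex is affinely homeomorphic to an inverse limit $\varprojlim(\Delta_{d_n-1}, p_n)$ of standard finite-dimensional simplices with affine surjective bonding maps. I would refine this representation so that each $p_n$ is represented by a rational column-stochastic matrix, and then clear denominators to obtain a nonnegative integer matrix $M_n$ with constant column sum $c_n$. For $x\in\R_+^{d_{n+1}}$ one has $\|M_n x\|_1 = c_n\|x\|_1$, so $M_n$ induces exactly $p_n$ on the bases $\{\|x\|_1=1\}$. By inserting extra levels and telescoping I would further arrange that (i) each $p_n$ is surjective (for instance by forcing some columns of $M_n$ to be standard basis vectors, so that every vertex of $\Delta_{d_n-1}$ lies in the image), and (ii) the diagram $(M_n)$ is \emph{simple}, i.e. that sufficiently long products $M_nM_{n+1}\cdots M_{n+k}$ are strictly positive.

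\textbf{Step 2 (geometric realization).} The heart of the argument is to build, inside the given surface $X$, a minimal geodesic lamination $\Lambda$ together with an exhaustion $X_1\subset X_2\subset\cdots$ so that $\Lambda\cap X_n$ consists of exactly $d_n$ homotopy classes of proper arcs and no compact minimal sublamination, and so that each arc class of $\Lambda\cap X_{n+1}$ traverses the arc classes of $\Lambda\cap X_n$ with multiplicities recorded by $M_n$. Under these conditions $C(X_n)=\R_+^{d_n}$ (there are no minimal-sublamination coordinates and the arc weights are unconstrained) and the transition map $\pi_n$ is precisely $M_n$. I would produce such a lamination using the construction of geodesic laminations developed earlier in the paper: choosing the exhaustion of $X$ so that each connecting region $X_{n+1}\setminus X_n$ carries enough topology, I route the $d_{n+1}$ incoming arc families through it so that they split and recombine into the $d_n$ outgoing families according to $M_n$, and then straighten to geodesics. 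Since $X$ is of infinite type, telescoping any fixed exhaustion yields connecting regions of arbitrarily large complexity, so every prescribed integer matrix can be realized (where $M_n$ is the identity the arcs simply pass through), and interleaving such trivial levels matches the growth of $d_n$ to the complexity available in the \emph{fixed} surface $X$. Simplicity of $(M_n)$ forces every leaf to meet every arc family at all deep levels, hence to be dense, so $\Lambda$ is minimal; in particular a compact subsurface of $X$ meets every leaf and the hypothesis of Theorem~\ref{mainthm:choquetbase} holds.

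\textbf{Step 3 (conclusion) and the main obstacle.} By Theorem~\ref{mainthm:inverselim}, $\mathcal M(\Lambda)$ is linearly homeomorphic to $\varprojlim(\R_+^{d_n}, M_n)$, and by Theorem~\ref{mainthm:choquetbase} the associated base is the inverse limit $\varprojlim(\Delta_{d_n-1}, p_n)$ of the bases, which is affinely homeomorphic to $\Delta$. The main obstacle is Step~2: producing a genuine minimal geodesic lamination whose level-$n$ combinatorics are exactly $(\R_+^{d_n}, M_n)$, with no parasitic sublaminations contributing extra extreme measures, with the arc homotopy count equal to $d_n$ on the nose, and with the traversal matrix equal to $M_n$. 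One must simultaneously control the topology (routing the arcs in the connecting regions and verifying both the homotopy classes and the traversal counts) and the geometry (that the combinatorial lamination is realized by an actual geodesic lamination on $X$ and that distinct weight systems yield distinct transverse measures). The bookkeeping that matches the prescribed dimensions $d_n$ to the complexity provided by the fixed $X$, via interleaving trivial levels with telescoping of the exhaustion, is what makes the construction go through on an arbitrary infinite-type surface of the first kind.
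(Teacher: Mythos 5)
Your skeleton is the same as the paper's: Lazar--Lindenstrauss to write $\Delta=\varprojlim(\Delta_{d_n-1},p_n)$, replacement of the bonding maps by nonnegative integer matrices, geometric realization of those matrices as transition maps of a directed arc system, and then Theorem~\ref{mainthm:inverselim} plus the base construction to conclude. But the two steps you treat as routine are exactly the ones that carry the mathematical content, and as written there are genuine gaps in both. First, you cannot simply ``refine'' the Lazar--Lindenstrauss representation so that the $p_n$ become rational column-stochastic matrices: perturbing bonding maps changes the inverse limit in general, and some quantitative control is required. The paper handles this with Brown's approximation theorem (Theorem~\ref{thm:brownapprox}), which supplies constants $L(g_1,\ldots,g_{n-1})$ such that any perturbation of $f_n$ within that tolerance leaves the inverse limit unchanged up to (affine) homeomorphism; Lemma~\ref{matrices} then produces the integer approximants within tolerance. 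Note also that after this perturbation the maps are no longer surjective (a strictly positive column-stochastic matrix maps the simplex into its interior), so your simultaneous demands of surjectivity via standard-basis-vector columns and of simplicity/positivity are in tension; the paper simply abandons surjectivity and lets Brown's theorem do the work.

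Second, and more seriously, your Step~2 --- which you yourself flag as ``the main obstacle'' --- is asserted rather than proved. It is not true that ``every prescribed integer matrix can be realized'' by routing disjoint simple arcs through a connecting region, at least not by any argument you give. The paper's Lemma~\ref{lem:oddmatrixrealization} realizes only matrices all of whose entries are \emph{positive odd integers}, and the oddness is not cosmetic: the construction subdivides each edge of the dual interval into $\sum_j a_{ij}$ sub-edges, labels vertices by an alternating scheme, and routes each new arc back and forth through $U$ switching direction at each traversal; the parity of the $a_{ij}$ is what makes the labels close up consistently and guarantees that the resulting arcs are pairwise disjoint with dual tree again an interval. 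This is why Lemma~\ref{matrices} arranges odd entries in the first place. Positivity of all entries then gives minimality via Proposition~\ref{prop:inverselimminimal} (your ``simplicity'' condition is the same idea, but you would still need the realization step to respect it). Finally, the transfer from the model punctured disks to an arbitrary infinite type $X$ of the first kind is done in the paper by embedding a flute subsurface and extending the exhaustion, rather than by matching $d_n$ to the complexity of $X_{n+1}\setminus X_n$; your telescoping remark points in the right direction but is not needed in that form. In short: right architecture, but the approximation lemma and the odd-entry arc-routing lemma are the proof, and both are missing.
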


Realization theorems of this type for Choquet simplices are familiar from dynamics and algebra (\cite{downarowicz, gjerde_johansen, blackader, goodearl}). For instance, \cite{downarowicz} shows that every Choquet simplex arises as the space of invariant probability measures of a minimal compact dynamical system.

Our main tool for proving Theorem \ref{mainthm:choquetrealization} is a construction of laminations as inverse limits of arcs on compact subsurfaces, together with a construction of such inverse limits using planar maps of intervals. These constructions recover \emph{every} geodesic lamination without compact sub-laminations or leaves asymptotic to punctures, and we anticipate that they can be used to construct examples of laminations with other exotic properties. 

Unfortunately, the cones $\mathcal M(\Lambda)$ do not always admit compact bases, the obstruction being sub-laminations disjoint from any given compact subsurface. We give examples in Section \ref{sec:nobase}. One such example is a lamination with cone of transverse measures $\R_+^\N$. 

It would be interesting to connect the methods of this paper with Teichm\"{u}ller theory. In \cite{thurston_boundary}, Bonahon-\v{S}ari\'{c} produce a Thurston boundary for the quasi-conformal deformation space of an infinite type hyperbolic surface. This is the space of projective \emph{bounded} measured laminations. It would be interesting to know whether the cone of bounded transverse measures to an infinite type geodesic lamination (with the \emph{uniform} weak${}^*$ topology) admits an explicit description as an inverse limit, similar to Theorem \ref{mainthm:inverselim}. One could then study bases for such cones and ask:

\begin{quest}
Is there a Choquet simplex which does not embed projectively into the Thurston boundary of the quasi-conformal deformation space of some infinite type hyperbolic surface $X$?
\end{quest}

\paragraph{Structure of the paper.} 
In Section \ref{sec:borelmeasures}, we study the cone of finite measures on a compact totally disconnected metrizable space. We show that it may be described as a closed sub-cone of $\R_+^\N$ cut out by countably many linear equations. This fact is probably well known to the experts but we couldn't find it in the literature. The techniques of Section \ref{sec:borelmeasures} foreshadow those of Section \ref{sec:equations}, where we prove Theorem \ref{mainthm:subcone} describing the cone of transverse measures $\mathcal M(\Lambda)$ to a lamination $\Lambda$ via equations. In Section \ref{sec:inverselim} we prove Theorem \ref{mainthm:inverselim} describing $\mathcal M(\Lambda)$ as an inverse limit. Namely, in Section \ref{sec:transmaps} we give a more precise version of Theorem \ref{mainthm:inverselim}, in Section \ref{sec:examplecones} we give explicit descriptions of certain cones using Theorem \ref{mainthm:inverselim}, and in Section \ref{sec:completingproof} we complete the proof of Theorem \ref{mainthm:inverselim}. In Section \ref{sec:bases} we prove Theorem \ref{mainthm:choquetbase} giving an explicit description of bases for $\mathcal M(\Lambda)$ for certain laminations $\Lambda$. We also give several explicit examples of bases that arise easily. In Section \ref{sec:inverselimlams} we give a construction of laminations on infinite type surfaces as ``inverse limits'' of finite systems of arcs on compact subsurfaces. We use this construction to prove Theorem \ref{mainthm:choquetrealization} in Section \ref{sec:realization}. Finally in Section \ref{sec:nocompactbase} we prove Theorem \ref{mainthm:zeromeasure} and give some examples of laminations $\Lambda$ for which $\mathcal M(\Lambda)$ has no compact base.

\paragraph{Acknowledgements.} 
The authors thank Leonel Robert for helpful email correspondence and the anonymous referee for helpful suggestions that improved the exposition of the paper. The first author was partially supported by NSF grant DMS-1905720. The second author was partially supported by NSF grants DMS-1840190 and DMS-2202986.

  \section{Borel measures on compact totally disconnected metrizable
    spaces}
    \label{sec:borelmeasures}

Before getting started we set up a few definitions. A \emph{cone} $C$ is a set endowed with operations of addition and multiplication by scalars in $\R_+=[0,\infty)$ such that addition is associative and commutative and $c\cdot (v+w)=c\cdot v+c\cdot w$ for $c\in \R_+$ and $v,w\in C$. A particular type of cone is a \emph{convex cone}, which is a subset of a real vector space which is closed under the ambient operations of addition and multiplication by scalars in $\R_+$. A map between convex sets $f:C\to D$ is \emph{affine} if $f(rv+sw)=rf(v)+sf(w)$ for $r,s\in \R_+$ with $r+s=1$ and $v,w\in C$. If $C$ and $D$ are convex cones then $f:C\to D$ is \emph{linear} if additionally $f(0)=0$. We introduce the following convention:

\begin{conv}
Unless stated otherwise, all cones will be assumed to be convex cones. All maps between cones will be assumed to be linear. All maps between convex subsets of cones will be assumed to be affine.
\end{conv}

\noindent An \emph{$n$-dimensional simplicial cone} is a sub-cone of $\R^m$ spanned by $n$ linearly independent vectors.

  Our first theorem previews Theorem \ref{mainthm:subcone}, and illustrates many of the techniques that we use to prove it.
  Let $X$ be a compact, totally disconnected metrizable space (e.g. the Cantor set).
  The theorem below is presumably known to the
  experts, but we couldn't find it in the literature. Let $\mathcal
  M(X)$ be the space of finite Borel measures on $X$ with the weak${}^*$
  topology. This is the weakest topology such that for every
  continuous function $f:X\to\R$, the function $\mathcal M(X)\to\R$,
  $\mu\mapsto \int_Xf~ d\mu$ is continuous. By $\mathcal K(X)$
  denote the (finite or countable) collection of all clopen subsets of
  $X$. Applying the definition to the characteristic function of any
  $K\in \mathcal K(X)$, we see that the function $\mathcal
  M(X)\to\R_+$, $\mu\mapsto \mu(K)$ is continuous. Putting all these
  maps together gives a continuous linear map
  $$\Phi:\mathcal M(X)\to \prod_{K\in \mathcal K(X)}\R_+ .$$
  This product is homeomorphic to $\R_+^n$ for some $n\geq 0$ or to $\R_+^\N$, depending on whether $\mathcal K$ is finite or not.
  However, the map $\Phi$ is usually not surjective. For $K\in \mathcal K(X)$ and a point $x\in\prod_{\mathcal K(X)} \R_+$, we let $x_K$ denote the coordinate of $x$ corresponding to $K$. If $K$ is the disjoint union of $K_1,\ldots,K_r$ then we have $x_{K_1}+\cdots+x_{K_r}=x_K$ for any $x$ in the image of $\Phi$. We endow $\prod_{\mathcal K(X)} \R_+$ with the product topology.

\begin{thm}
\label{thm:measurecone}
    The map
  $\Phi$
  is a linear homeomorphism onto the closed sub-cone
  $C_X\subset\prod_{K\in\mathcal K(X)}\R_+$ cut out by the linear
  equations $x_K=x_{K_1}+\cdots+x_{K_r}$ whenever
  $K=\bigsqcup_{j=1}^r K_j$.
  \end{thm}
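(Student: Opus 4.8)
The plan is to show that $\Phi$ is a linear homeomorphism onto $C_X$ by verifying three things: (1) the image of $\Phi$ lands in $C_X$, i.e. $\Phi$ satisfies the defining additivity equations; (2) $\Phi$ is injective and surjective onto $C_X$; and (3) $\Phi$ is a homeomorphism onto its image. The linearity of $\Phi$ and the fact that its image satisfies the additivity relations $x_K = x_{K_1} + \cdots + x_{K_r}$ are immediate from finite additivity of measures, as already observed in the text just before the statement. So the content lies in surjectivity onto $C_X$ and in the bicontinuity.

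For surjectivity, I would start with an arbitrary point $x \in C_X$, so $x$ assigns to each clopen $K$ a value $x_K \in \R_+$ compatibly with finite disjoint unions, and construct a finite Borel measure $\mu$ with $\mu(K) = x_K$ for all $K \in \mathcal K(X)$. The key structural fact to exploit is that $X$ is compact, totally disconnected, and metrizable, so it has a countable basis of clopen sets and every open set is a countable increasing union of clopen sets; equivalently one can organize $\mathcal K(X)$ via a nested sequence of finite clopen partitions $\mathcal P_1, \mathcal P_2, \ldots$ whose mesh tends to $0$, with $\mathcal P_{n+1}$ refining $\mathcal P_n$. On the algebra generated by a single partition the assignment $K \mapsto x_K$ is a finitely additive measure, and the compatibility equations guarantee these cohere across refinements. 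The plan is then to invoke a standard extension theorem: the function $K \mapsto x_K$ defined on the Boolean algebra $\mathcal K(X)$ is finitely additive and non-negative, and because $X$ is compact with $\mathcal K(X)$ consisting of clopen (hence compact and open) sets, finite additivity automatically upgrades to countable additivity on the algebra --- any countable disjoint clopen cover of a clopen set is actually finite by compactness. Carath\'eodory's extension theorem (or the Riesz representation theorem applied to the positive linear functional $f \mapsto \sum_{P} x_P f(\xi_P)$ built from partitions) then yields a unique finite Borel measure $\mu$ extending $x$.

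For injectivity, I would use that the clopen sets generate the Borel $\sigma$-algebra (again by total disconnectedness and metrizability): two finite Borel measures agreeing on the generating algebra $\mathcal K(X)$ agree everywhere by the $\pi$-$\lambda$ theorem, since $\mathcal K(X)$ is closed under finite intersection. This shows $\Phi$ is injective, and combined with the extension result, $\Phi$ is a continuous linear bijection onto $C_X$.

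The step I expect to require the most care is \textbf{the homeomorphism claim}, specifically that $\Phi^{-1}$ is continuous, equivalently that $\Phi$ is an open map onto $C_X$, or --- more usefully --- that $\Phi$ is a closed map. The cleanest route is to argue that $\Phi$ is a continuous bijection from $\mathcal M(X)$ onto $C_X$ and to upgrade to a homeomorphism by a compactness argument applied levelwise. Concretely, the subspace of $\mathcal M(X)$ consisting of measures of total mass at most $R$ is weak${}^*$ compact (by Banach--Alaoglu / Prohorov on the compact metric space $X$), its image under the continuous injection $\Phi$ is compact hence closed in $C_X$, and a continuous bijection from a compact space to a Hausdorff space is automatically a homeomorphism. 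Since every point of $C_X$ has its total-mass coordinate $x_X$ bounded, $C_X$ is exhausted by these compact pieces and the inverse is continuous on each; checking that the resulting inverse is globally continuous on $C_X$ (not merely on each compact slab) is the one place where I would be careful to confirm that the topology on $C_X$ is the direct-limit-compatible one, which follows because any convergent sequence in $C_X$ has bounded mass coordinate and hence eventually lies in a single slab.
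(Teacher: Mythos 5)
Your proposal is correct, and the core of it (surjectivity and injectivity via the Carath\'eodory Extension Theorem on the Boolean algebra $\mathcal K(X)$, with the observation that compactness of clopen sets upgrades finite additivity to countable additivity on the algebra) is exactly the paper's argument; your aside about why the premeasure hypothesis is automatically satisfied is in fact the point the paper delegates to its citation. Where you genuinely diverge is the continuity of $\Phi^{-1}$: the paper simply invokes the Portmanteau Theorem (convergence of $\mu_n(K)$ for all clopen $K$ implies weak${}^*$ convergence on a compact totally disconnected space), whereas you exhaust $C_X$ by the slabs $\{x_X\le R\}$, use Banach--Alaoglu to see that $\Phi$ restricts to a continuous bijection from the weak${}^*$-compact set of measures of mass at most $R$ onto the corresponding slab, conclude it is a homeomorphism there, and then glue via the fact that any convergent sequence in $C_X$ has bounded mass coordinate. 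Your route is sound --- the gluing step works because $\mathcal K(X)$ is countable, so $C_X$ is metrizable and continuity can be tested on sequences, and the mass coordinate $x_X$ is continuous --- and it has the virtue of being self-contained, essentially reproving the relevant direction of Portmanteau from compactness. The paper's citation is shorter and reuses a theorem it needs anyway (Theorem \ref{thm:portmanteau} is stated just before the proof), which better foreshadows the properness arguments used later for the maps $\Psi_i$; your version makes the compactness mechanism explicit, which is the same mechanism (Banach--Alaoglu plus levelwise compactness) that reappears in Lemma \ref{lem:mapsproper} and the proof of Theorem \ref{thm:limithomeo}.
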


  To prove the theorem, we apply the following \emph{Portmanteau Theorem}:
  
  \begin{thm}[{\cite[Theorem 2.3]{billingsley_conv}}]
  \label{thm:portmanteau}
  Let $X$ be a compact, totally disconnected metrizable space. Let $\{\mu_n\}_{n=1}^\infty$ and $\mu$ be finite Borel measures on $X$. Then $\mu_n\xrightarrow{\text{weak}^*} \mu$ if and only if $\mu_n(K)\to \mu(K)$ for every clopen subset $K$ of $X$.
  \end{thm}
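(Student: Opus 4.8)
The plan is to prove both implications directly; total disconnectedness collapses the usual Portmanteau argument to a clean approximation statement. The forward direction is immediate: if $K\subseteq X$ is clopen then $\chi_K$ is continuous, so weak${}^*$ convergence applied to $\chi_K$ gives $\mu_n(K)=\int_X\chi_K\,d\mu_n\to\int_X\chi_K\,d\mu=\mu(K)$.

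For the converse I would show that, under the hypothesis $\mu_n(K)\to\mu(K)$ for all clopen $K$, the integrals $\int_X f\,d\mu_n$ converge to $\int_X f\,d\mu$ for every continuous $f$. The essential structural fact is that a compact, totally disconnected, metrizable space is \emph{zero-dimensional}: it has a basis of clopen sets. Given $f$ and $\varepsilon>0$, uniform continuity yields $\delta>0$ with $|f(x)-f(y)|<\varepsilon$ whenever $d(x,y)<\delta$; covering $X$ by clopen sets of diameter $<\delta$ and passing to a finite subcover $U_1,\dots,U_k$ by compactness, I disjointify via $K_i=U_i\setminus(U_1\cup\cdots\cup U_{i-1})$ to obtain a finite clopen partition $X=\bigsqcup_{i=1}^m K_i$ with each $\mathrm{diam}(K_i)<\delta$ (a difference of clopen sets is clopen, and $K_i\subseteq U_i$). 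Picking $x_i\in K_i$ and setting $g=\sum_{i=1}^m f(x_i)\chi_{K_i}$ produces a \emph{locally constant} $g$ with $\|f-g\|_\infty<\varepsilon$.

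The conclusion follows from a three-$\varepsilon$ estimate. Since $X$ is itself clopen, the hypothesis gives $\mu_n(X)\to\mu(X)$, so $M\defeq\sup_n\mu_n(X)<\infty$. Then
$$\left|\int_X f\,d\mu_n-\int_X f\,d\mu\right|\le\int_X|f-g|\,d\mu_n+\left|\int_X g\,d\mu_n-\int_X g\,d\mu\right|+\int_X|f-g|\,d\mu.$$
The first and third terms are bounded by $\varepsilon M$ and $\varepsilon\,\mu(X)$ respectively, while the middle term equals $\bigl|\sum_{i=1}^m f(x_i)\bigl(\mu_n(K_i)-\mu(K_i)\bigr)\bigr|$, a finite sum that tends to $0$ by hypothesis since each $K_i$ is clopen. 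Hence $\limsup_n\bigl|\int_X f\,d\mu_n-\int_X f\,d\mu\bigr|\le\varepsilon\bigl(M+\mu(X)\bigr)$, and letting $\varepsilon\to0$ finishes the proof.

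The only step requiring care, and the heart of the matter, is the uniform approximation of continuous functions by locally constant ones, i.e. the construction of arbitrarily fine finite clopen partitions. This is exactly where total disconnectedness is used: on a general compact metric space one has no such partitions, and the full Portmanteau theorem there requires the more delicate arguments with separate $\liminf$ and $\limsup$ inequalities for open and closed sets. The boundedness of the masses $\mu_n(X)$, needed to control the approximation error uniformly in $n$, is a minor point handled by applying the hypothesis to the clopen set $X$ itself.
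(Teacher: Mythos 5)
Your proof is correct, but note that the paper itself does not prove this statement at all: it is quoted as Theorem 2.3 of Billingsley's \emph{Convergence of Probability Measures} and used as a black box. So your argument is not a variant of the paper's proof but a self-contained replacement for the citation, and it is a genuinely different route from what the reference provides. Billingsley's Portmanteau theorem is proved in the general metric-space setting via the $\liminf$/$\limsup$ inequalities for open and closed sets and the characterization by $\mu$-continuity sets (clopen sets being continuity sets for every $\mu$); your proof bypasses all of that by exploiting the standing hypothesis of total disconnectedness from the start: zero-dimensionality gives arbitrarily fine finite clopen partitions, hence uniform approximation of continuous functions by locally constant ones, and then the three-$\varepsilon$ estimate closes the argument, with the uniform mass bound $\sup_n \mu_n(X)<\infty$ correctly extracted by applying the hypothesis to the clopen set $X$. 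The trade-off is clear: the citation is shorter and covers general metric spaces, while your argument is elementary, works directly for finite (not just probability) measures, and isolates exactly where total disconnectedness is used --- which is also the mechanism behind the paper's Theorem \ref{thm:measurecone} and its refinement to the families $\mathcal A_i$. The one ingredient you assert without proof is that a compact, totally disconnected, metrizable space has a basis of clopen sets; this is standard (in compact Hausdorff spaces connected components coincide with quasi-components), but it is the nontrivial topological input and deserves an explicit reference or a sentence of justification.
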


\begin{proof}[Proof of Theorem \ref{thm:measurecone}]
  The image is clearly contained in $C_X$.  That $\Phi$ is a bijection
  to this cone follows from the Carath\'eodory Extension Theorem, which states
  that any finite, additive measure on the algebra of sets $\mathcal K(X)$ uniquely
  extends to a Borel measure on $X$ (see e.g. \cite[Section 7.4.2]{einsiedler_ward}). Finally, that $\Phi^{-1}:C_{X}\to
  \mathcal M(X)$ is continuous follows from the Portmanteau Theorem \ref{thm:portmanteau}.
\end{proof}

In practice, one can use much smaller collections of clopen sets and
explicitly compute the cone $C_X$.
We fix a sequence $\mathcal A_i$, $i=1,2,\ldots$
of finite families of clopen subsets of $X$ so that:
\begin{enumerate}[(i)]
\item $\mathcal A_1=\{X\}$;
  \item for each $i>1$, $\mathcal A_i$ forms a finite partition of $X$
    that refines $\mathcal A_{i-1}$; and
    \item for some (any) metric on $X$ the mesh of $\mathcal A_i$ goes
      to 0 as $i\to\infty$.
\end{enumerate}
Also let $\mathcal A=\bigcup_i \mathcal A_i$.

\begin{ex}
  When $X$ is the middle thirds Cantor set we can take $\mathcal A_i$
  to consist of the $2^{i-1}$ clopen sets obtained by intersecting $X$
  with the defining intervals at stage $i$.
  That is, \[\mathcal A_2 = \left\{\left[0,\frac{1}{3}\right]\cap X,\left[\frac{2}{3},1\right]\cap X\right\},\ \ \ \mathcal A_3=\left\{\left[0,\frac{1}{9}\right]\cap X, \left[\frac{2}{9},\frac{1}{3}\right]\cap X, \left[\frac{2}{3},\frac{7}{9}\right]\cap X, \left[\frac{8}{9},1\right]\cap X\right\}, \ \ \ \text{ etc.}\]
  When $X=\{1/n:
  n=1,2,\ldots\}\cup\{0\}$ we can take $\mathcal A_i$ for $i>1$ to consist of the
  singletons $\{1\},\{1/2\},\ldots,\{\frac 1{i-1}\}$ and the set $\{1/n:
  n=i,i+1,\ldots\}\cup\{0\}$.
\end{ex}

Then one obtains a linear map $\mathcal
M(X)\to\prod_{A\in\mathcal A}\R_+$ which is a homeomorphism onto the
sub-cone cut out by the equations $x_A=x_{A_1}+\cdots+x_{A_r}$ when
$A\in\mathcal A_i$, $A_j\in \mathcal A_{i+1}$ and $A=\bigsqcup_{j=1}^r A_j$. The
proof is the same as that of Theorem \ref{thm:measurecone}, since both the Carath\'eodory and
Portmanteau theorems hold for $\mathcal A$.

\begin{ex}
  When $X=\{1/n:
  n=1,2,\ldots\}\cup\{0\}$, after removing redundant coordinates and
  keeping only those corresponding to $\{1/n:
  n=i,i+1,\ldots\}\cup\{0\}$, we see that $\mathcal M(X)$ can be
  identified with the sub-cone of $\R_+^\N$ defined by the
  inequalities $x_1\geq x_2\geq x_3\geq\ldots$.
\end{ex}

\subsection{Bases and Choquet simplices}

Let $B$ be a compact convex set in a metrizable locally convex
topological vector space, such as
$\R^{\N}$ with the product topology. Recall that an {\it extreme point} of $B$ is a point
$x\in B$ that is not contained in the interior of any interval in
$B$. The Krein-Milman Theorem states that $B$ is the smallest closed
convex set that contains the set $\Ext(B)$ of all extreme points of
$B$ (which form a Borel set by \cite[Proposition 1.3]{phelps}).
A stronger version of the Krein-Milman Theorem is Choquet's
Theorem, that for every point $c\in B$ there is a Borel probability
measure $\nu$ supported on the set of extreme points
such that,
formally, $$c=\int_{\Ext(B)} x~ d\nu .$$ This means that for every affine
function $f:B\to\R$, we have
$$f(c)=\int_{\Ext(B)} f(x)~ d\nu$$
(see \cite[Sections 3, 4]{phelps} for all this).
This expression is a generalization of a convex combination.
In general, this measure $\nu$ is not unique. For example, the center
of the square can be written as the midpoint of opposite vertices in
two ways. A compact convex set $B$ as above is a {\it Choquet simplex} if the
measure $\nu$ is unique, for every $c\in B$. A compact convex set in
$\R^n$ is a Choquet simplex if and only if it is a simplex.

A {\it base} of a cone $C$ is a compact convex set
that doesn't
contain 0 and intersects every ray in $C$ based at the origin in exactly one point. For
example, the space of probability measures $\mathcal P(X)$ on $X$
(where $X$ is compact, totally disconnected, metrizable, as
above) is compact by the Banach-Alaoglu Theorem, and so it is a base
for $\mathcal M(X)$. A probability measure on $X$ is extreme in
$\mathcal P(X)$ if and only if it is supported on one point, and the
space $\Ext(\mathcal P(X))$ can be identified with $X$. We now see that
$\mathcal P(X)$ is a Choquet simplex, since for a probability measure
$\mu$ on $X$, the required measure $\nu$ on $\Ext(\mathcal P(X))=X$ is
the measure $\mu$ itself.
As a simpler example, a base for the simplicial cone $\R_+^{n+1}$ is the standard $n$-dimensional simplex.

In finite dimensions Choquet simplices are just the standard
simplices, but in infinite dimensions they can be quite
pathological. The best behaved are {\it Bauer simplices}, whose extreme
points form a closed subset, but there are also {\it Poulsen
  simplices}, whose extreme points are dense
  (see \cite{alfsen, poulsen_simplex} for more on these examples).

\section{$\mathcal M(\Lambda)$ as a sub-cone of $\R_+^{\N}$}
\label{sec:equations}

Let $\Lambda$ be a geodesic lamination on a complete hyperbolic
surface $X$.
In the special case that $X$ is finite type we allow $X$ to have geodesic boundary and we allow the leaves of $\Lambda$ to intersect the boundary transversely. All the definitions below will apply to this special sub-case. Such surfaces with boundary will come up only when we consider an exhaustion of a larger surface. If $X$ is infinite type then we assume that it is without boundary.
In the case that $X$ \emph{does not} have boundary, the universal cover $\widetilde X$ is homeomorphic to the hyperbolic plane $\H^2$ and $\pi_1(X)$ acts on the compactification $\widetilde X \cup \partial_\infty \widetilde X$, where $\partial_\infty \widetilde X$ is the Gromov boundary, i.e. a circle. Fixing any $x\in \widetilde X$, the \emph{limit set} of $X$ is the closure of the orbit of $x$ in $\widetilde X \cup \partial_\infty \widetilde X$ intersected with the Gromov boundary. I.e. the limit set is $\overline{\pi_1(X)\cdot x}\cap \partial_\infty \widetilde{X}$. We will assume throughout the paper that $X$ is of the \emph{first kind}, meaning that the limit set is all of $\partial_\infty \widetilde{X}$.
We have the following theorem of \v{S}ari\'{c}:
\begin{thm}[{\cite[Theorem 1.1]{measured_lams}}]
\label{thm:nowheredense}
Let $X$ be a complete hyperbolic surface of the first kind and $\Lambda$ a geodesic lamination on $X$. Then $\Lambda$ is nowhere dense in $X$.
\end{thm}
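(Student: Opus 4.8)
The plan is to argue by contradiction, using that $\Lambda$ is closed, so being \emph{nowhere dense} is equivalent to having empty interior. Suppose instead that $\Lambda$ contains a nonempty open set $U$. Passing to the universal cover, $\widetilde\Lambda\subset\H^2$ is then a $\Gamma$-invariant geodesic lamination (with $\Gamma=\pi_1(X)$) whose interior contains a lift $\widetilde U$ of $U$. Since $X$ is of the first kind, $\Gamma$ is nonelementary and its limit set is all of $S^1=\partial_\infty\widetilde X$; I will use the standard consequence that the set of pairs $(g_+,g_-)$ of attracting/repelling fixed points of hyperbolic elements $g\in\Gamma$ is dense in the space of distinct pairs of points of $S^1$. (This is exactly where first kind is essential: for a second-kind group the fixed points are dense only in a proper sub-Cantor set, and laminations with interior genuinely exist.)

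First I would extract the local product structure of the interior. An interior point lies on a leaf, and because $\widetilde U$ is open and filled by disjoint geodesics, a short transversal arc through this point meets a one-parameter family of leaves foliating a \emph{strip}. As disjoint complete geodesics in $\H^2$ are unlinked, hence have monotonically ordered endpoints, the two endpoints of these leaves trace out two disjoint open arcs $A,B\subset S^1$ with an order-reversing correspondence: every point of $A$ is an endpoint of a strip leaf whose other endpoint lies in $B$. Shrinking the transversal, I may assume $A$ and $B$ lie in disjoint closed arcs whose union is not all of $S^1$, so there is an open \emph{gap} arc disjoint from $\overline A\cup\overline B$.

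The heart of the argument is to manufacture two leaves of $\widetilde\Lambda$ that genuinely cross, contradicting the lamination property. Using density of fixed-point pairs, choose a hyperbolic $g\in\Gamma$ with attracting fixed point $g_+$ in the interior of $A$ and repelling fixed point $g_-$ in the gap. Since $\overline A\cup\overline B$ is compact and avoids $g_-$, the iterates $g^n$ contract it into an arbitrarily small arc around $g_+\subset A$. Fix any strip leaf $\ell$ and set $m=g^n\ell$ for large $n$; then $m$ is again a leaf of $\widetilde\Lambda$, and both of its endpoints lie in a tiny sub-arc of $A$ around $g_+$. Choosing a point $u\in A$ strictly between the two endpoints of $m$ (possible since strip endpoints fill $A$), the strip leaf through $u$ has its other endpoint in the far arc $B$, so its endpoints are linked with those of $m$. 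Hence $m$ and this strip leaf cross transversally in $\H^2$, the desired contradiction.

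I expect the main obstacle to be the first step: rigorously establishing that an interior point produces a genuinely foliated strip whose leaf-endpoints fill honest open arcs with the monotone correspondence — this relies on disjoint complete geodesics being unlinked and on the endpoint maps varying continuously and monotonically along a transversal. The other delicate point, worth emphasizing, is that naive attempts to find crossing leaves fail: the images $g^n\ell$ of a single leaf under a hyperbolic element whose axis crosses $\ell$ are merely \emph{nested}, not crossing, so one cannot produce a contradiction that way. The crossing appears only after one squeezes \emph{both} endpoints of some leaf into a small arc that the strip already fills densely, forcing a short strip leaf to ``jump over'' it.
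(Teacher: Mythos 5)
The paper does not prove this statement at all: it is imported verbatim as \cite[Theorem 1.1]{measured\_lams} (\v{S}ari\'{c}), so there is no internal proof to compare against. Your argument is a correct, essentially self-contained proof modulo two standard facts, both of which you correctly identify: (i) for a geodesic lamination of $\H^2$ the leaf through a point varies continuously, so a geodesic transversal $\tau$ contained in the putative open set $\widetilde U\subset\widetilde\Lambda$ yields continuous, weakly monotone endpoint maps $e_1,e_2:\tau\to S^1$ with disjoint images $A,B$ lying in small disjoint closed arcs around the two endpoints of the leaf through the center of $\tau$; and (ii) for a non-elementary Fuchsian group the pairs of hyperbolic fixed points are dense in $L(\Gamma)\times L(\Gamma)$, which equals $S^1\times S^1$ precisely in the first-kind case. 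Granting these, the contraction of $\overline A\cup\overline B$ toward $g_+\in\operatorname{int}(A)$ and the choice of a strip leaf through a point $u$ strictly between the endpoints of $g^n\ell$ does produce two linked, hence crossing, leaves of $\widetilde\Lambda$, a genuine contradiction. One small case needs a sentence: since the leaf through $t\in\tau$ meets the geodesic arc $\tau$ exactly once, the pair map $t\mapsto\{e_1(t),e_2(t)\}$ is injective, but one of $e_1,e_2$ could still be constant (a fan of asymptotic leaves), in which case $A$ degenerates to a point and "$g_+\in\operatorname{int}(A)$" is vacuous; since not both images can degenerate, you simply swap the roles of $A$ and $B$ and run the identical argument with $g_+$ in the interior of the nondegenerate one, the other endpoint still lying outside the small contracted arc. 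With that caveat addressed, the proof is complete.
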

A \emph{transversal} or \emph{transverse arc} is an embedded smooth arc $\tau \subset X$ with endpoints
in $X\setminus \Lambda$ such that $\tau$ is transverse to every leaf of $\Lambda$. Two
transversals $\sigma,\tau$ are {\it homotopic} if there is a smooth 
map $F:[0,1]\times [0,1]\to X$ so that the restrictions to
$\{0\}\times [0,1]$ and $\{1\}\times [0,1]$ are diffeomorphisms onto
$\sigma$ and $\tau$, respectively, and the pre-image $F^{-1}(\Lambda)$
consists of horizontal segments $[0,1]\times \{t\}$. Such a map $F$ is a {\it homotopy}
between $\sigma$ and $\tau$.
Denote by $f_\sigma:[0,1]\to \sigma$ the map $f_\sigma(\cdot)=F(0,\cdot)$ and $f_\tau:[0,1]\to \tau$ the map $f_\tau(\cdot)=F(1,\cdot)$. There is an induced diffeomorphism $f=f_\tau \circ f_\sigma^{-1}:\sigma\to\tau$
that preserves intersections with $\Lambda$.

A {\it
  transverse measure} to $\Lambda$ is a function $\mu$ that to each
transversal $\tau$ associates a finite Borel measure $\mu_\tau$ on
$\tau$ subject to the conditions:
\begin{itemize}
  \item if $\tau'\subset\tau$ is a subarc which is also a transversal,
    then $\mu_{\tau'}$ is the restriction of $\mu_\tau$, and
  \item if $F$ is a homotopy from $\sigma$ to $\tau$ and $f$ is the induced diffeomorphism $f=f_\tau\circ f_\sigma^{-1}$, then $\mu_\tau$ is equal to the push-forward measure $f_*(\mu_\sigma)$.
\end{itemize}

\noindent It follows from the definition that $\mu_\tau$ is supported on $\Lambda \cap \tau$.

Let $\mathcal M(\Lambda)$ be the set of transverse measures to
$\Lambda$.
Transverse measures may be added and multiplied by scalars in $\R_+$ simply by performing these operations to each measure $\mu_\tau$. Thus $\mathcal M(\Lambda)$ is a cone.
We endow $\mathcal M(\Lambda)$ with the weakest topology
such that the maps $\mathcal M(\Lambda)\to \mathcal M(\Lambda\cap\tau)$,
$\mu\mapsto\mu_\tau$ are continuous for every transversal $\tau$.
This is called the \emph{weak${}^*$ topology}.
The addition and scalar multiplication operations are continuous in this topology.

\begin{ex}
\label{ex:product}
Let $X$ be a complete hyperbolic surface with finite area and non-empty totally geodesic boundary. Consider a nowhere dense lamination $\Lambda$ consisting of a family of proper arcs which are homotopic through homotopies preserving $\partial X$ setwise. For instance, $\Lambda$ could consist of homotopic compact arcs from the boundary $\partial X$ to itself. We may view $\Lambda$ as an embedding of $A\times I$ where $A$ is compact and totally disconnected, and $I$ is an interval in $\R$ (possibly infinite) with $A\times \partial I$ mapping to $\partial X$. There is a transversal $\tau_0$ which intersects each leaf of $\Lambda$ exactly once. We have $\tau_0\cap \Lambda\cong A$ and thus $\mu\mapsto \mu_{\tau_0}$ defines a linear map $\mathcal M(\Lambda)\to \mathcal M(A)$. This map is a homeomorphism, since any other transversal to $\Lambda$ may be partitioned into sub-transversals which are homotopic to sub-transversals of $\tau_0$. Thus, any measure $\mu_\sigma$ is determined entirely by the measure $\mu_{\tau_0} \in \mathcal M(A)$. These laminations will turn up extensively in Section \ref{sec:inverselim}.
\end{ex}

\noindent In this section we prove Theorem \ref{mainthm:subcone} from the introduction.

\begin{proof}[Proof of Theorem \ref{mainthm:subcone}]
  Fix a family of transversals $\tau_1,\tau_2,\ldots$
  such that every leaf intersects at least one $\tau_j$.
  Recall that for a totally disconnected compact metrizable space $X$, $\mathcal K(X)$ denotes the set of clopen subsets of $X$.
  For each
  $\tau_j$, let $\mathcal K_j\vcentcolon=\mathcal K(\Lambda\cap\tau_j)$.
Sending a transverse measure $\mu$ to the restrictions $\mu_{\tau_i}$ defines a map
  $$\Phi:\mathcal M(\Lambda)\to \prod_j
  \mathcal M(\Lambda\cap\tau_j)\subseteq \prod_j \prod_{K\in\mathcal
    K_j}\R_+=\R_+^{\N}$$ which is linear and continuous. The image is
  contained in the sub-cone $C_\Lambda$ of $\prod_j
  \mathcal M(\Lambda\cap\tau_j)$ cut out by the following linear
  equations: $x_K=x_L$ whenever there are $K\in \mathcal K_i$ and $L\in \mathcal K_j$ and transversals $\sigma\subset \tau_i$ and $\tau\subset \tau_j$ with $K=\Lambda\cap\sigma$,
  $L=\Lambda\cap\tau$, such that $\sigma$ is homotopic to $\tau$.
  
  We now
  argue that $\Phi$ is a homeomorphism onto $C_\Lambda$. We utilize the following basic fact about homotopies. See e.g. \cite[Section 4.2]{calegari} for the argument.

\begin{lem}
\label{lem:flowboxes}
Let $\Lambda$ be a geodesic lamination on the hyperbolic surface $X$ of the first kind. Let $p_1,p_2$ be points lying on a common leaf of $\Lambda$. Let $\sigma_i$ be transversals to $\Lambda$ through the points $p_i$. Then there are sub-transversals $\sigma_i'\subset \sigma_i$ containing $p_i$ for $i=1,2$ such that $\sigma_1'$ is homotopic to $\sigma_2'$.
\end{lem}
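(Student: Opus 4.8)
The plan is to establish the standard local product structure (\emph{flow boxes}) for the geodesic lamination $\Lambda$ and then chain homotopies along the segment of the common leaf joining $p_1$ and $p_2$. Since $\Lambda$ is a geodesic lamination its leaves are pairwise disjoint simple geodesics, and by Theorem \ref{thm:nowheredense} $\Lambda$ is nowhere dense; consequently about any point $q$ on a leaf $\ell$ one has a flow box, namely a chart $\phi:U\to(-1,1)\times(-1,1)$ with $\phi(q)=(0,0)$ such that $\phi(\Lambda\cap U)=C\times(-1,1)$ for a compact totally disconnected $C\subset(-1,1)$ with $0\in C$, the leaves appearing as the horizontal segments $\{c\}\times(-1,1)$. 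This is exactly the local structure described in \cite[Section 4.2]{calegari}. In such a chart a transversal is an arc crossing the horizontal segments, and after shrinking we may assume each $\sigma_i$ lies in a flow box around $p_i$.

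The first building block is homotopy \emph{within} a single flow box $U$. If $\sigma,\tau\subset U$ are transversals meeting exactly the same horizontal leaves $\{c\}\times(-1,1)$ for $c$ in a common set $C_0\subseteq C$, then they are homotopic in the sense of the paper: parametrize $\sigma$ and $\tau$ so that points with equal second coordinate lie on a common leaf, and define $F$ by sliding each crossing point along its leaf (its horizontal segment). By construction the set of second coordinates hitting $\Lambda$ is independent of the time parameter, so $F^{-1}(\Lambda)$ is a union of horizontal segments $[0,1]\times\{t\}$ and $F$ is a homotopy in the required sense, with induced map $f=f_\tau\circ f_\sigma^{-1}$ matching intersections with $\Lambda$. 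This relation is reflexive and symmetric, and it is transitive: the concatenation of two such homotopies, reparametrized in the time direction so each factor is constant near its endpoints, again has $F^{-1}(\Lambda)$ equal to horizontal segments.

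To connect $p_1$ to $p_2$, let $\ell_0\subset\ell$ be the compact subarc of the common leaf from $p_1$ to $p_2$ and cover it by finitely many flow boxes. By the Lebesgue number lemma we may choose points $p_1=q_0,q_1,\ldots,q_n=p_2$ in order along $\ell_0$ so that each consecutive pair $q_{k-1},q_k$ lies in a common flow box $B_k$. Working backwards from $k=n$, a small sub-transversal $\sigma_2'$ of $\sigma_2$ through $p_2=q_n$ crosses a clopen family of leaves which, via the product structure of $B_n$, determines a transversal through $q_{n-1}$ crossing the matching leaves; continuing through $B_{n-1},\ldots,B_1$ produces at each stage a transversal $\eta_k$ through $q_k$ and finally a sub-transversal $\sigma_1'$ of $\sigma_1$ through $p_1=q_0$. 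By the flow-box step $\eta_{k-1}$ is homotopic to $\eta_k$ for each $k$ (with $\eta_0=\sigma_1'$ and $\eta_n=\sigma_2'$), and concatenating these finitely many homotopies yields the desired homotopy $\sigma_1'\simeq\sigma_2'$.

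The main obstacle is purely bookkeeping: I must ensure that the successive sub-transversals cross \emph{exactly} the same clopen family of leaves as we pass from one box to the next, so that the within-box homotopies apply, and that each $\sigma_i'$ remains a genuine (nonempty) sub-arc through $p_i$. Compactness of $\ell_0$ is what makes this possible, since only finitely many boxes and hence finitely many shrinkings are involved. One also checks that the concatenated homotopy can be smoothed at the seams without destroying the horizontal-segment condition, which follows from the reparametrization making each factor constant near the endpoints of its time interval before gluing.
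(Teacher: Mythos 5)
Your proposal is correct and is exactly the standard flow-box argument: the paper does not prove this lemma itself but defers to the same argument by citing \cite[Section 4.2]{calegari}, which proceeds via the local product structure and chaining homotopies along a compact leaf segment just as you do. The bookkeeping issues you flag (shrinking to a common clopen family of leaves and smoothing the concatenation) are real but routine, and your outline handles them adequately.
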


  If $\mu,\mu'\in \mathcal M(\Lambda)$ with $\mu\neq \mu'$,
  then there is a transversal $\tau$ so that the
  induced measures $\mu_\tau$ and $\mu'_\tau$ are different.
  By uniqueness in the Carath\'{e}odory Extension Theorem,
  after replacing $\tau$ with a sub-transversal, we may assume that the total measures
  $\mu(\tau)$ and $\mu'(\tau)$ are different.
  Since every leaf of $\Lambda$ intersects some $\tau_i$, for each point $p\in \tau \cap \Lambda$ we may apply Lemma \ref{lem:flowboxes} to find a sub-transversal $\sigma\subset \tau$ containing $p$ which is homotopic into some $\tau_i$.
  By compactness of $\tau \cap \Lambda$, we can sub-divide $\tau$
  into finitely many sub-transversals each of which is homotopic to a
  sub-transversal of some $\tau_i$. It follows that for some $i$ the
  measures on $\Lambda\cap\tau_i$ induced by $\mu$ and $\mu'$ are
  distinct, showing that $\Phi$ is injective.

  Now, suppose we are given a point in the sub-cone $C_\Lambda$. This yields
  Borel measures on $\Lambda\cap\tau_i$ for every $i$, satisfying the
  homotopy invariance. If $\tau$ is an arbitrary transversal, we can
  sub-divide it as above into sub-transversals so that each is homotopic
  into some $\tau_i$, and we can pull back the measures on $\tau_i$ to get a
  measure on $\Lambda\cap\tau$.
  If $\tau=\sigma_1 \cup \ldots\cup \sigma_r$ and $\tau=\sigma'_1\cup \ldots \cup \sigma'_s$ are two different such partitions of $\tau$ into sub-transversals, then we may consider their common refinement $\tau=\bigcup_{i,j} (\sigma_i\cap \sigma'_j)$. Using the equations defining $C_\Lambda$, we see that the measure on $\sigma_i\cap \sigma'_j$, and thus on $\tau$, is independent of the partition. Independence of the partition yields invariance of the constructed measure under homotopies and passing to sub-transversals.
  This shows that the image of $\Phi$
  is the entire sub-cone $C_\Lambda$.

  Finally, we argue that $\Phi^{-1}:C_\Lambda\to \mathcal M(\Lambda)$
  is continuous. By the definition of the topology on $\mathcal
  M(\Lambda)$, it suffices to argue that the composition of
  $\Phi^{-1}$ with the restriction to $\mathcal M(\Lambda\cap\tau)$ is
  continuous, for every transversal $\tau$. When $\tau=\tau_i$ for some $i$
  this is just a coordinate projection, so it is continuous. For an
  arbitrary $\tau$, sub-divide and reduce to sub-transversals of
  $\tau_i$'s as above.
\end{proof}

\begin{cor}
\label{cor:baseexistence}
  Let $\Lambda$ be a geodesic lamination on a complete hyperbolic
  surface $X$ of the first kind. If there is a compact subsurface of $X$ that
  intersects every leaf of $\Lambda$ then $\mathcal M(\Lambda)$ admits
  a base.
\end{cor}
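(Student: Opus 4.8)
The plan is to manufacture an explicit continuous linear functional on $\mathcal M(\Lambda)$ that is strictly positive off the origin and take its unit level set as the base. First I would exploit the hypothesis: let $Y$ be the compact subsurface meeting every leaf. Around each point of the compact set $\Lambda\cap Y$ there is a flow box, i.e.\ a neighborhood of the form (transversal)$\,\times\,$(interval along leaves) in which every leaf entering the box crosses the transversal factor. Finitely many such boxes cover $\Lambda\cap Y$; let $\tau_1,\dots,\tau_m$ be the associated transversals. Since every leaf of $\Lambda$ meets $Y$, hence meets one of the boxes, every leaf crosses at least one $\tau_j$. I then set
\[ L:\mathcal M(\Lambda)\to\R_+,\qquad L(\mu)=\sum_{j=1}^m \mu_{\tau_j}(\tau_j), \]
which is continuous and linear, and define $B=\{\mu\in\mathcal M(\Lambda): L(\mu)=1\}$.

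Next I would check the formal properties of a base, postponing compactness. Convexity is immediate ($B$ is a hyperplane section of the cone) and $0\notin B$ since $L(0)=0$. The essential point is that $L(\mu)>0$ whenever $\mu\neq 0$: if $\mu\neq 0$ then some $\mu_\tau\neq 0$, so $\mu_\tau$ has a point $p$ in its support; the leaf through $p$ crosses some $\tau_j$, and Lemma~\ref{lem:flowboxes} produces sub-transversals $\sigma'\ni p$ of $\tau$ and $\sigma''$ of $\tau_j$ that are homotopic. By homotopy invariance $\mu_{\sigma''}(\sigma'')=\mu_{\sigma'}(\sigma')>0$ (positive because $p$ lies in the support), so $L(\mu)\geq \mu_{\tau_j}(\tau_j)>0$. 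Consequently each ray $\{t\mu:t\geq 0\}$ with $\mu\neq 0$ meets $B$ in exactly the point $t=1/L(\mu)$.

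The main obstacle is compactness of $B$, and for this I would pass to the coordinate model of Theorem~\ref{mainthm:subcone}. Choosing the defining family of transversals there so that $\tau_1,\dots,\tau_m$ are among them, $\Phi$ identifies $\mathcal M(\Lambda)$ with the closed sub-cone $C_\Lambda\subseteq\R_+^\N$ whose coordinates are the masses $\mu_{\tau_i}(K)$ of clopen sets $K\in\mathcal K_i$, and $L$ becomes the sum of the finitely many coordinates with $K=\Lambda\cap\tau_j$, $j\leq m$. I would then bound \emph{every} coordinate on $B$: using Lemma~\ref{lem:flowboxes} and compactness of $\Lambda\cap\tau_i$, subdivide $\tau_i$ (over its intersection with $\Lambda$) into finitely many, say $k_i$, sub-transversals each homotopic into one of $\tau_1,\dots,\tau_m$; homotopy invariance and monotonicity of mass give $\mu_{\tau_i}(\Lambda\cap\tau_i)\leq k_i\,L(\mu)=k_i$ on $B$, hence $\mu_{\tau_i}(K)\leq k_i$ for all $K\in\mathcal K_i$. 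Thus $\Phi(B)$ lies in the compact product $\prod_i\prod_{K\in\mathcal K_i}[0,k_i]$ and is closed, being $C_\Lambda$ intersected with the closed hyperplane $\{L=1\}$; Tychonoff then makes it compact, and transporting back through the homeomorphism $\Phi$ shows $B$ is compact. This establishes $B$ as a base for $\mathcal M(\Lambda)$.

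I expect the delicate step to be precisely this uniform bound on all coordinates: one must ensure that a \emph{single} finite subdivision of each $\tau_i$ into pieces homotopic into the covering transversals suffices, which is exactly where compactness of $\Lambda\cap\tau_i$ combined with Lemma~\ref{lem:flowboxes} does the work. Everything else — convexity, avoidance of the origin, and the one-point-per-ray property — follows formally once strict positivity of $L$ is in hand.
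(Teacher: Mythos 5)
Your proof is correct and follows essentially the same route as the paper: reduce to a finite collection of transversals meeting every leaf, pass to the sub-cone description of Theorem \ref{mainthm:subcone}, and obtain compactness from a product of compact sets of bounded-mass measures. The paper simply cites the general fact that admitting a base passes to finite products and closed sub-cones, whereas you unwind that fact explicitly via the total-mass functional $L$, the subdivision bound $\mu_{\tau_i}(\tau_i)\leq k_i L(\mu)$, and Tychonoff.
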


\begin{proof}
  In this case we can choose a finite collection of transversals that
  intersect every leaf.
  Thus, the product $\prod_j \mathcal M(\Lambda \cap \tau_j)$ is finite and each factor has its compact base of probability measures.
  For convex cones the property of having a
  base passes to finite products and closed sub-cones.
\end{proof}

\section{$\mathcal M(\Lambda)$ as an inverse limit}
\label{sec:inverselim}

As before, $X$ is hyperbolic of the
first kind and $\Lambda\subset X$ is a geodesic lamination. Since $X$ is of the first kind, we may fix an
exhaustion $$X_1\subset X_2\subset\ldots$$ of $X$ where each $X_i$ is
a finite area complete subsurface with totally geodesic boundary (see \cite{complete}).
Thus, $X_i$ is a compact surface with boundary minus finitely many points. We will sometimes refer to such surfaces as \emph{punctured compact subsurfaces}. In fact the proof of \cite[Proposition 3.1]{complete} shows that any exhaustion of $X$ by finite type subsurfaces straightens to an exhaustion by complete finite area subsurfaces with geodesic boundary. So we may blur the distinction between topological exhaustions and exhaustions by complete finite area subsurfaces with geodesic boundary.

We will assume for convenience that the boundary components of $X_i$ are transverse to $\Lambda$. This may be achieved as follows. Suppose that we have constructed a sequence $Y_1\subset Y_2\subset \ldots \subset Y_n$ of punctured compact subsurfaces such that $Y_i$ contains $X_i$ and $\partial Y_i$ is transverse to $\Lambda$ for each $i\leq n$. Choose $m$ large enough that $X_m$ contains both $Y_n$ and $X_{n+1}$. Choose $p>m$ large enough that $X_p$ contains $\partial X_m$ in its interior and $q>p$ large enough that $X_q$ contains $\partial X_p$ in its interior. The components of $\partial X_p$ are contained in the interior of $X_q\setminus X_m$. We may apply a mapping class $f$ supported on the components of $X_q\setminus X_m$, so that for any component $c$ of $\partial X_p$, $f(c)$ intersects $\Lambda$ transversely (if at all). Then setting $Y_{n+1}=f(X_p)$ yields $Y_n\subset Y_{n+1}$, $X_{n+1}\subset Y_{n+1}$, and $\partial Y_{n+1}$ is transverse to $\Lambda$.

We will need the following structure theorem for $\Lambda\cap
X_i$.
Recall that the \emph{support} of a transverse measure consists of the points $p$ such that every transversal $\tau$ containing $p$ has positive measure.

\begin{prop}
  Consider the lamination $\Lambda_i=\Lambda\cap X_i$. It has a sub-lamination consisting of finitely many compact minimal sub-laminations contained in the interior of $X_i$, plus finitely many parallel families $A\times I$
  of proper arcs, with $A$ compact and totally disconnected and $I$ a closed sub-interval of $\R$. Any transverse measure on $\Lambda_i$ is supported on this sub-lamination. Any leaf of
  $\Lambda_i$ that does not belong to this sub-lamination accumulates
  on one or more of the compact minimal sub-laminations of $\Lambda_i$.
\end{prop}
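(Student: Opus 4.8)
The plan is to reduce the classification to the structure theorem for geodesic laminations on finite type surfaces, \cite[Theorem I.4.2.8]{fundamentals}, and to treat the leaves that meet $\partial X_i$ by hand. I would first sort the leaves of $\Lambda_i=\Lambda\cap X_i$ into three types, writing a leaf of $\Lambda_i$ as a component of $\ell\cap X_i$ for $\ell$ a leaf of $\Lambda$. Type (a) consists of leaves lying in a compact minimal sublamination contained in the interior; type (b) consists of \emph{proper arcs}, i.e. properly embedded leaves both of whose ends leave every compact subset of $X_i$, escaping through $\partial X_i$ or into a puncture; and type (c) consists of the remaining leaves, each of which has at least one end that does not leave every compact set. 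The advertised sublamination will be the union of the finitely many compact interior minimal sublaminations with the finitely many parallel families of proper arcs, and the content of the proposition is that the type (c) leaves are finite in number, accumulate on the compact minimal sublaminations, and carry no transverse measure.

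For the minimal sublaminations I would first observe that the union $\Lambda_i^{\mathrm{int}}$ of those leaves of $\Lambda_i$ which are entire leaves of $\Lambda$ contained in $X_i$ is closed: if such leaves converge to a leaf $\ell$ of $\Lambda$, then $\ell\subset X_i$ because $X_i$ is closed, and $\ell$ cannot meet $\partial X_i$, since it would then cross it transversely and leave $X_i$. Thus $\Lambda_i^{\mathrm{int}}$ is a genuine geodesic lamination on the finite type surface $\mathrm{int}(X_i)$, and \cite[Theorem I.4.2.8]{fundamentals} applies: it is a disjoint union of finitely many minimal sublaminations together with finitely many isolated leaves. Every minimal sublamination is compact, since its leaves are recurrent and hence bounded away from the punctures, and by transversality of $\partial X_i$ it is contained in the interior of $X_i$; these are the finitely many compact interior minimal sublaminations. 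Among the finitely many isolated leaves, those that are properly embedded (running to punctures) are recorded as proper arcs, and each of the rest has a spiraling end.

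I would then handle the leaves meeting $\partial X_i$, which are precisely the components of $\ell\cap X_i$ for leaves $\ell$ of $\Lambda$ leaving $X_i$. The proper arcs of $\Lambda_i$ (including the isolated leaves of $\Lambda_i^{\mathrm{int}}$ running to punctures) are disjoint, essential, simple geodesic arcs, and a finite type surface admits only boundedly many disjoint, pairwise non-homotopic essential simple arcs; hence there are finitely many homotopy classes. Within one class the leaves are pairwise disjoint and homotopic, so a common transversal $\tau$ meets them in a compact set $A$ that is nowhere dense by Theorem \ref{thm:nowheredense}, hence totally disconnected, and straightening the homotopies identifies their union with a product $A\times I$ exactly as in Example \ref{ex:product}. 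This produces the finitely many parallel families; their union with the compact minimal sublaminations is a finite union of closed saturated sets, hence a sublamination with the stated structure. A type (c) leaf has a non-proper end whose limit set is a sublamination containing a minimal set; since that end remains in a compact part, it accumulates on one of the compact interior minimal sublaminations, which is the accumulation statement. Finally, the number of half-leaves asymptotic to the finitely many compact minimal sublaminations is finite, and each type (c) leaf is determined by such a half-leaf, so there are only finitely many type (c) leaves.

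The main work, and the step I expect to be the genuine obstacle, is showing that every transverse measure is supported on the good sublamination. Fix a transversal $\tau$ and a transverse measure $\mu$; it suffices to show $\mu_\tau$ gives zero mass to the countably many points where the finitely many type (c) leaves meet $\tau$, and for this the key is a recurrence argument. Given a type (c) leaf $\ell$ and a point $p\in\ell\cap\tau$, I slide $\tau$ along $\ell$ toward its spiraling end using Lemma \ref{lem:flowboxes}; on a fixed small transversal to the compact minimal sublamination $\mu_0$ that $\ell$ accumulates on, this produces infinitely many distinct crossings $q_1,q_2,\dots$ of $\ell$ accumulating on $\mu_0$. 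Homotopy invariance of the transverse measure forces the atoms at these crossings to have equal mass, and since they all lie on a single compact transversal, finiteness of $\mu$ forces that common mass, and hence the mass at $p$, to vanish. Thus each type (c) leaf carries no atomic mass, each meets $\tau$ in a countable set, and there are finitely many of them, so their total $\mu_\tau$-measure is zero and $\mu_\tau$ is supported on the good sublamination. The delicate points are making the sliding rigorous when $\mu_0$ is an exceptional minimal set rather than a closed geodesic, and confirming that the crossings genuinely accumulate on a single compact transversal; both are handled with the flow-box picture underlying Lemma \ref{lem:flowboxes}.
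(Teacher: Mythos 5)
Your structural analysis (the trichotomy of leaves, finiteness of the homotopy classes of arcs, total disconnectedness of $A$ via Theorem \ref{thm:nowheredense}, the product structure as in Example \ref{ex:product}, and the accumulation statement) matches the paper's proof. The gap is in the measure-support argument, and it stems from your claim that the type (c) leaves are \emph{finite in number}. This is false in general: $\Lambda_i$ is not a lamination on a closed finite type surface but the intersection of $\Lambda$ with $X_i$, and uncountably many leaves of $\Lambda$ can enter through $\partial X_i$ and spiral onto a single compact minimal sublamination $\Gamma$ (for instance, a parallel family $A\times\R$ of leaves of $\Lambda$ with $A$ a Cantor set, all asymptotic to one another and all spiraling onto a closed geodesic $\gamma\subset X_i$; in the universal cover these form a fan of disjoint geodesics sharing an endpoint of the axis of $\gamma$). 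The classification theorem bounds the number of non-minimal leaves only for laminations of the ambient finite type surface, not for the trace of an infinite type lamination on $X_i$; your appeal to "finitely many half-leaves asymptotic to the minimal sublaminations" has no justification and fails in this example.

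This finiteness claim is load-bearing: your recurrence argument only shows that each individual intersection point of a type (c) leaf with $\tau$ carries no atom, and you then sum over a countable union of countable sets. When uncountably many leaves spiral onto $\Gamma$, the set of type (c) points on $\tau$ is uncountable, and vanishing of atoms does not give vanishing of measure. The paper's proof circumvents this by working with a whole sub-transversal at once: it chooses $\tau$ so that \emph{every} ray of $\Lambda_i$ through $\tau$ in a fixed direction is asymptotic, accumulates onto $\Gamma$, and never returns to $\tau$ (using a collar of $\gamma$, or the surface filled by $\Gamma$ in the exceptional case), then homotopes the entire transversal $\tau$ onto infinitely many pairwise disjoint sub-intervals of a fixed transversal $\sigma$ meeting $\Gamma$, and concludes $\mu(\tau)=0$ directly from $\mu(\sigma)<\infty$. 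That packet-level version of your sliding argument is what you need; the atom-by-atom version does not close the proof.
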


\begin{proof}
The lamination $\Lambda_i$ contains leaves of three possible types:
\begin{enumerate}[(1)]
\item arcs which on each end either (a) intersect a boundary component of $X_i$ or (b) are asymptotic to a puncture of $X_i$;
\item simple closed geodesics and bi-infinite geodesics which are contained in a compact minimal sub-lamination in the interior of $X_i$;
\item rays and bi-infinite geodesics which accumulate onto minimal sub-laminations on at least one end but are not contained in these minimal sub-laminations (we will also say these geodesics \emph{spiral} onto the minimal sub-laminations).
\end{enumerate}

\begin{figure}[h]

\centering
\def\svgwidth{0.7\textwidth}
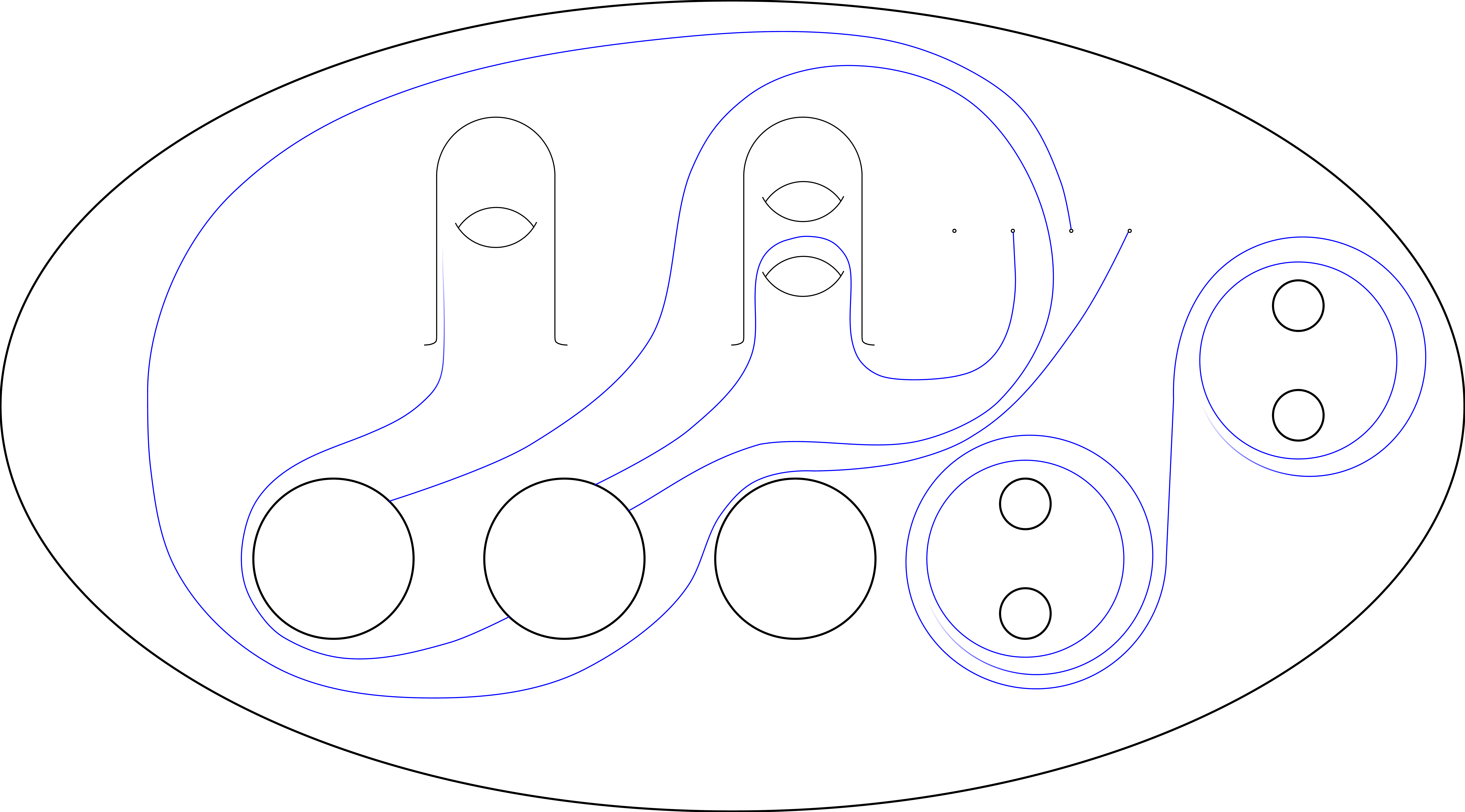

\caption{The various geodesics of $\Lambda_i$. Boundary components of $X_i$ are denoted by thick black lines while punctures are denoted by small circles. Geodesics of $\Lambda_i$ are indicated by blue lines. In this case there are three minimal sub-laminations and three homotopy classes of arcs. Homotopy classes $\ell_j^i$ are drawn as though they consist of a single arc for ease of presentation. Other geodesics of $\Lambda_i$ accumulate onto compact minimal sub-laminations $\Gamma_j^i$ on at least one end.}
\label{fig:geodesics}
\end{figure}

See Figure \ref{fig:geodesics}. There are finitely many arcs of type (1) up to homotopies preserving the boundary components of $X_i$ setwise. Moreover, removing the leaves of types (1) and (3) and applying the classification theorem for laminations on finite type surfaces \cite[Theorem I.4.2.9]{fundamentals} yields that there are finitely minimal sub-laminations in the interior. Considering all of the arcs of type (1) in a single homotopy class yields a clopen subset of leaves of $\Lambda_i$ which is homeomorphic to $A\times I$ for some closed sub-interval $I$ of $\R$ (possibly all of $\R$ or a ray). The fact that $A$ is totally disconnected follows from Theorem \ref{thm:nowheredense}. This proves the first claim.

Now we show that any transverse measure to $\Lambda_i$ is supported on the union of the leaves of types (1) and (2). Let $\mu$ be a transverse measure to $\Lambda_i$ and consider a leaf $L$ which accumulates onto a compact minimal sub-lamination $\Gamma$ of $\Lambda_i$ but is not contained in $\Gamma$. There is a transversal $\tau$ through $L$ and a direction such that all the rays of $\Lambda_i$ through $\tau$ in this direction are asymptotic, accumulate onto $\Gamma$, and never return to $\tau$.
To see the existence of such a $\tau$, we take $S$ to be one of the following surfaces: (i) if $\Gamma$ is a closed geodesic then $S$ is a collar neighborhood of $\Gamma$ small enough that any geodesic that intersects $S$ and is disjoint from $\Gamma$ spirals onto $\Gamma$; (ii) if $\Gamma$ is not a closed geodesic then $S$ is the surface filled by $\Gamma$. We may take $\tau$ to lie inside of $S\setminus \Gamma$ and then all of the leaves of $\Lambda_i$ through $\tau$ spiral onto $\Gamma$. By taking $\tau$ even smaller if necessary, such leaves never return to $\tau$. Taking $\tau$ smaller again, such leaves all exit the same cusp of $S\setminus \Gamma$ in case (ii) and they are all asymptotic, in either case.
Consider a transversal $\sigma$ which intersects $\Gamma$. Considering a point $p\in \sigma\cap \Gamma$, we see that $L$ intersects $\sigma$ in infinitely many points limiting to $p$. We see that we may homotope $\tau$ to infinitely many disjoint sub-intervals of $\sigma$. Since $\mu(\sigma)<\infty$, we must have $\mu(\tau)=0$. This completes the proof.
\end{proof}

We denote by $\mathcal M(\Lambda_i)$ the cone of transverse measures
to $\Lambda_i$. By Example \ref{ex:product}, we can write $\mathcal M(\Lambda_i)$ as a finite
product $\prod_{\Gamma}\mathcal M(\Gamma)\times \prod_{A\times I}\mathcal M(A)$
where $\Gamma$ ranges over the compact minimal sub-laminations of $\Lambda_i$ and $A\times I$ over the
parallel families of proper arcs. There is an associated cone
$C_i\vcentcolon = \prod_{\Gamma}\mathcal M(\Gamma)\times \prod_{A\times I}\R_+$, which is the
quotient of $\mathcal M(\Lambda_i)$ obtained by identifying all
measures on $A\times I$ with the same total mass. Each cone $\mathcal M(\Gamma)$ is finite-dimensional (see e.g. \cite[Section 1.9.1]{calegari}) and thus $C_i$ is a finite-dimensional simplicial cone. We sometimes denote $C_i$ by $C(X_i)$ to make the dependence on the surface $X_i$ clear.

The situation is summarized in the following commutative diagram, where $\mathcal W(\Lambda)$ is the
inverse limit of the bottom row.
The maps $\Psi_i:\mathcal M(\Lambda_i)\to C_i$ are the quotient maps just defined.
The horizontal arrows $\rho_i$ on the top are
restriction maps and on the bottom $\pi_i$ are the induced maps on the
quotient cones. One may check that the maps $\pi_i$ are linear, since $\Psi_{i+1},\rho_i,$ and $\Psi_i$ are linear. The map $\Psi$ is $(\Psi_1,\Psi_2,\ldots)$.

\begin{center}
\begin{tikzcd}
\mathcal M(\Lambda_1) \arrow[d,"\Psi_1"] & \mathcal M(\Lambda_2)
\arrow[l,"\rho_1" above] \arrow[d,"\Psi_2"] & \mathcal M(\Lambda_3)
\arrow[l,"\rho_2" above]  \arrow[d,"\Psi_3"] & \arrow[l,"\rho_3" above]
\ldots & \mathcal M(\Lambda)\arrow[d,"\Psi"] \\ C_1 & C_2
\arrow[l,"\pi_1"]  & C_3 \arrow[l,"\pi_2"] & \ldots  \arrow[l,"\pi_3"]
& \mathcal W(\Lambda)
\end{tikzcd}
\end{center}

\noindent We now state the main theorem of this section. Theorem \ref{mainthm:inverselim} from the introduction will follow immediately from it.

\begin{thm}
\label{thm:limithomeo}
The map $\Psi:\mathcal M(\Lambda) \to \mathcal W(\Lambda)$ is a linear homeomorphism.
\end{thm}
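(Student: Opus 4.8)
My plan is to dispatch linearity and continuity of $\Psi$ quickly and then concentrate on the two genuinely substantive points: that $\Psi$ is a bijection and that its inverse is continuous. First, $\Psi$ is linear because it is assembled from the restriction maps $r_i\colon \mathcal M(\Lambda)\to\mathcal M(\Lambda_i)$ followed by the linear quotient maps $\Psi_i$; the compatibility $\pi_i\circ\Psi_{i+1}\circ r_{i+1}=\Psi_i\circ r_i$ is exactly the commutativity of the diagram together with transitivity of restriction, so $\Psi$ really does land in the inverse limit $\mathcal W(\Lambda)$. For continuity it is enough, since $\mathcal W(\Lambda)$ carries the subspace topology from $\prod_i C_i$, to check that each coordinate $\Psi_i\circ r_i$ is continuous. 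Here $r_i$ is continuous because the weak${}^*$ topology on $\mathcal M(\Lambda)$ is generated by the evaluations $\mu\mapsto\mu_\tau$, and those with $\tau\subset X_i$ already generate the topology of $\mathcal M(\Lambda_i)$; and $\Psi_i$ is continuous because each of its components is either restriction to the transversals meeting a fixed $\Gamma$ or a total-mass functional $\mu\mapsto\mu_{\tau_0}(\tau_0)$, both weak${}^*$-continuous.

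The mechanism I would use for both injectivity and surjectivity is that, although each single $\Psi_i$ forgets the distribution of a transverse measure along a parallel family $A\times I$ and remembers only its total mass, the whole tower $(c_i)$ recovers that distribution. Concretely, I fix a transversal $\tau_0$ meeting each leaf of a family $A\times I$ in $X_i$ once, so that $A\cong\Lambda\cap\tau_0$ is compact and totally disconnected. For $j\geq i$, each leaf of this family, regarded inside $X_j$, lies either in a parallel family or in a compact minimal sub-lamination of $\Lambda_j$, and this partitions $A$ into finitely many clopen pieces $\mathcal A^{(j)}$. I claim these partitions refine as $j$ grows and have mesh tending to $0$: two distinct leaves of $\Lambda$ are disjoint geodesics, so they eventually enter distinct parallel families (or one limits onto a minimal sub-lamination that the other avoids) once the exhausting subsurface grows to engulf the segment where they separate. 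Granting this, the $\mathcal A^{(j)}$ play exactly the role of the partitions $\mathcal A_i$ of Section~\ref{sec:borelmeasures}, and the coordinates of $(c_j)$ record the masses of their pieces.

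With that in hand the two halves of bijectivity go as follows. For injectivity, if $\Psi(\mu)=\Psi(\mu')$ but $\mu\neq\mu'$, then $\mu_\tau\neq\mu'_\tau$ for some $\tau\subset X_i$, and by uniqueness in the Carath\'eodory Extension Theorem I may assume $\mu$ and $\mu'$ give different mass to some clopen $K\subset\Lambda\cap\tau$. Since transverse measures are supported on the minimal sub-laminations and parallel families (by the structure proposition for $\Lambda_i$), I subdivide $K$ along the partitions $\mathcal A^{(j)}$ into pieces each lying in a single $\Gamma$ or a single parallel family of some $X_j$; their masses are coordinates of $c_j=\Psi_j(r_j\mu)=\Psi_j(r_j\mu')$, hence agree, forcing $\mu_\tau(K)=\mu'_\tau(K)$, a contradiction. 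For surjectivity I start from a compatible $(c_i)\in\mathcal W(\Lambda)$. On each parallel family the relation $\pi_i(c_{i+1})=c_i$ says precisely that the total masses assigned to the pieces of $\mathcal A^{(j)}$ are finitely additive, so by Theorem~\ref{thm:measurecone} (Carath\'eodory together with the Portmanteau Theorem~\ref{thm:portmanteau}, applied to the mesh-$0$ partitions) they extend uniquely to a Borel measure on $A$; together with the honest measures in the $\mathcal M(\Gamma)$ factors this yields $\mu_i\in\mathcal M(\Lambda_i)$ with $\Psi_i(\mu_i)=c_i$, and compatibility of the $c_i$ forces $\rho_i(\mu_{i+1})=\mu_i$, so the $\mu_i$ glue to a transverse measure $\mu$ on $\Lambda$ with $\Psi(\mu)=(c_i)$. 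The hard part will be twofold: proving rigorously that the combinatorial partitions $\mathcal A^{(j)}$ have mesh tending to $0$ on the support of every transverse measure, and checking that the reconstructed $\mu$ satisfies homotopy-invariance globally rather than merely on each $X_i$; I expect both to be handled by the flow-box and homotopy arguments (Lemma~\ref{lem:flowboxes}) already used in the proof of Theorem~\ref{mainthm:subcone}.

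Finally, continuity of $\Psi^{-1}$ cannot be read off formally from bijectivity, since these cones are not compact, so I would verify it directly. By definition of the weak${}^*$ topology it suffices to show that $\mu\mapsto\mu_\tau$ is continuous as a function of $\Psi(\mu)$ for each transversal $\tau$, and by the Portmanteau Theorem~\ref{thm:portmanteau} it suffices to treat $\mu\mapsto\mu_\tau(K)$ for clopen $K\subset\Lambda\cap\tau$. Subdividing $K$ as above into pieces lying in single combinatorial types of a fixed $X_j$ expresses $\mu_\tau(K)$ as a finite sum of coordinates of $c_j$, that is, as the composition of $\Psi^{-1}$ with a projection to finitely many coordinates, which is continuous; this completes the argument.
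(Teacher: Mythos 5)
Your overall architecture is viable and genuinely differs from the paper's in two places: for surjectivity the paper does not reconstruct the measure at all, but instead shows each $\Psi_i$ is proper and surjective (Lemma \ref{lem:mapsproper}, via Banach-Alaoglu) and then quotes the Bourbaki fact that an inverse limit of non-empty compact sets is non-empty; and continuity of $\Psi^{-1}$ is obtained for free from properness ($\Psi$ proper implies closed between metrizable spaces, and a closed continuous bijection is a homeomorphism), rather than verified coordinate-by-coordinate as you do. Your direct Carath\'eodory reconstruction is closer in spirit to the paper's later subsection on effectivizing the homeomorphism, and your observation that continuity of the inverse is not automatic is exactly the issue the paper's properness argument is designed to circumvent. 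Both routes can be made to work.

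The genuine gap is in the central mechanism you rely on for injectivity, surjectivity, and continuity of the inverse alike: the claim that the combinatorial data partitions $A$ into clopen pieces $\mathcal A^{(j)}$ whose mesh tends to $0$. First, this is not a clopen partition as described: a leaf of $\Lambda_j$ through $\tau_0$ may be neither a proper arc nor contained in a compact minimal sub-lamination --- it may spiral onto one --- and the set of such points of $A$ is neither open nor closed, since spiraling leaves accumulate onto the minimal sub-lamination. Second, even after grouping each $\Gamma_k^j$ with its spiraling leaves into a single clopen piece, that piece does not shrink as $j$ grows, so the mesh does not tend to $0$ on the part of $A$ carried by compact minimal sub-laminations; there the measure must be recovered from the $\mathcal M(\Gamma_k^j)$ coordinates of $c_j$ (which are honest measures, not total masses), and this has to be spliced into your additivity/Portmanteau argument as a separate case --- the paper's Remark \ref{rem:sameintmeas} is exactly this splice. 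Third, the separation statement you need on the arc part --- that two distinct points of $A$ eventually lie in different pieces --- is not a consequence of the flow-box Lemma \ref{lem:flowboxes}; it is the content of Lemma \ref{lem:sameintnum}, whose proof passes to the universal cover and uses that $\bigcup_n \widetilde{X_n}=\widetilde X$ (Lemma \ref{lem:firstkindgeodesics}), i.e.\ the first-kind hypothesis, which your sketch never invokes. Note also that two distinct points of $A$ can lie on the \emph{same} leaf of $\Lambda$ (different components of $L\cap X_i$), so the separation must be argued for distinct lifts in $\widetilde X$ rather than for distinct leaves in $X$. Once you replace ``mesh $\to 0$'' by Lemma \ref{lem:sameintnum} on the arc part and route the minimal-sub-lamination part through the $\mathcal M(\Gamma)$ coordinates, your argument closes up.
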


The proof has the following outline: (1) $\mathcal M(\Lambda)$ is the inverse limit of $\mathcal M(\Lambda_i)$. (2) All vertical maps $\Psi_i$ are proper and surjective. (3) Consequently, $\Psi$ is proper and surjective. (4) Since $X$ is of the first kind, $\Psi$ is injective. (5) Consequently, $\Psi$ is a homeomorphism.

Fact (1) follows from the definitions and (2) is a consequence of the
Banach-Alaoglu Theorem. Then (3) follows by a diagram chase. The main
thing to be proved is (4).
Before giving the full proof we pause to consider the cones $C_i$, the transition maps $\pi_i$, and some examples of cones of measures that can be characterized using Theorem \ref{mainthm:inverselim}.

\subsection{Cones of weights and transition maps}
\label{sec:transmaps}

We pause to give a more complete and intuitive description of the maps $\pi_n$. Denote by $\ell_1^n,\ldots,\ell_{r(n)}^n$ the homotopy classes of proper arcs in $\Lambda_n$ and by $\Gamma_1^n,\ldots,\Gamma_{s(n)}^n$ the compact minimal sub-laminations contained in the interior of $X_n$. Thus, the arcs in the homotopy class $\ell_i^n$ form some parallel family $A^n_i\times I_i^n$ where $A^n_i$ is compact, totally disconnected, and metrizable, and $I_i^n$ is a closed (possibly infinite) interval in $\R$. There exists $s_0\geq 0$ such that $\Gamma_1^{n+1},\ldots,\Gamma_{s_0}^{n+1}$ all intersect $X_n$ in some (possibly empty) collection of arcs, while $\Gamma_{s_0+1}^{n+1},\ldots,\Gamma_{s(n)}^{n+1}$ are all contained in $X_n$. We have $\mathcal M(\Lambda_n)=\prod_{i=1}^{r(n)} \mathcal M(A^n_i) \times \prod_{i=1}^{s(n)} \mathcal M(\Gamma_i^n)$ and $C_n=\prod_{i=1}^{r(n)} \mathcal \R_+ \times \prod_{i=1}^{s(n)} \mathcal M(\Gamma_i^n)$. Let $e_j^n$ be the basis element 1 in the $j$-th factor $\R_+$ in $\prod_{i=1}^{r(n)} \R_+$. Then we may write an element of $C_n$ as \[w=\sum_{i=1}^{r(n)} b_i^n e_i^n + \sum_{i=1}^{s(n)} \nu_i^n\] where $b_i^n\geq 0$ and $\nu_i^n \in \mathcal M(\Gamma_i^n)$ for each $i$. If $\nu \in \mathcal M(\Lambda_n)$ then $\Psi_n(\nu)=\sum_i b_i^n e_i^n + \sum_i \nu_i^n$ where (1) $b_i^n$ is the measure $\nu(\tau_i^n)$ of a transversal $\tau_i^n$ which intersects each arc of $\ell_i^n$ exactly once and is disjoint from $\Lambda_n \setminus \ell_i^n$; and (2) $\nu_i^n$ is the restriction of $\nu$ to $\Gamma_i^n$: $\nu_i^n\vcentcolon= \nu|\Gamma_i^n$. We think of an element of $C_n$ as a \emph{weight}, assigning a number to each homotopy class of arcs $\ell_i^n$ and a transverse measure to each minimal lamination $\Gamma_i^n$. We will refer to $C_n$ as the \emph{cone of weights} for $\Lambda_n$. Finally, define $\tau_i^{n+1}$ for $1\leq i\leq r(n+1)$ to be a transversal intersecting each arc of $\ell_i^{n+1}$ exactly once and disjoint from $\Lambda_{n+1}\setminus \ell_i^{n+1}$.

For $1\leq j\leq r(n+1)$, choose $L$ to be any arc in $\ell_j^{n+1}$. For $1\leq i\leq r(n)$ we denote by $a_{ij}$ the number of arcs of $L\cap X_n$ which are homotopic to $\ell_i^n$. Thus, $A_j^{n+1} \times I_j^{n+1}$ passes through $A_i^n\times I_i^n$ exactly $a_{ij}$ times. Informally, we will say that $\ell_j^{n+1}$ \emph{traverses} $\ell_i^n$ $a_{ij}$ times.
From this, we see that we may partition $\tau_i^n$ into sub-transversals, $a_{ij}$ of which are homotopic to $\tau_j^{n+1}$ for each $1\leq j\leq r(n+1)$, and the remaining of which are disjoint from $\ell_1^{n+1}\cup \ldots \cup \ell_{r(n+1)}^{n+1}$.
The sub-transversals of $\tau_i^n$ which are disjoint from $\ell_1^{n+1}\cup \ldots\cup \ell_{r(n+1)}^{n+1}$ intersect the various minimal laminations $\Gamma_j^{n+1}$ for $1\leq j\leq s_0$ and leaves which spiral onto such $\Gamma_j^{n+1}$, but are otherwise disjoint from $\Lambda_{n+1}$. Therefore if $\nu\in \mathcal M(\Lambda_{n+1})$ then \[\nu(\tau_i^n)= \sum_{j=1}^{r(n+1)} a_{ij} \nu(\tau_j^{n+1})+\sum_{j=1}^{s_0} \left(\nu|\Gamma_j^{n+1}\right)(\tau_i^n).\] Putting this together yields: if $w=\sum_{j=1}^{r(n+1)}b_j^{n+1}e_j^{n+1}+\sum_{j=1}^{s(n+1)} \nu_j^{n+1}$ then \[\pi_n(w)= \sum_{i=1}^{r(n)} \sum_{j=1}^{r(n+1)} a_{ij}b_j^{n+1} e_i^n  +\sum_{i=1}^{r(n)} \sum_{j=1}^{s_0} \nu_j^{n+1}(\tau_i^n)e_i^n+ \sum_{j=s_0+1}^{s(n+1)} \nu_j^{n+1}\] noting that for $s_0+1\leq j\leq s(n+1)$, $\nu_j^{n+1}$ lies in $C_n$ since $\Gamma_j^{n+1}$ is contained in $X_n$.

The easiest case to understand is when $\Lambda_n$ and $\Lambda_{n+1}$ contain no compact minimal sub-laminations $\Gamma_*^*$. Then $C_n=\R_+^{r(n)}$, $C_{n+1}=\R_+^{r(n+1)}$, and $\pi_n(w)=\sum_{i=1}^{r(n)} \sum_{j=1}^{r(n+1)} a_{ij} b_j^{n+1} e_i^n$. Thus, $\pi_n$ is represented by the $r(n) \times r(n+1)$ matrix $(a_{ij})_{i=1,j=1}^{r(n),r(n+1)}$. 

\begin{figure}[h]

\centering
\def\svgwidth{0.8\textwidth}
\begingroup%
  \makeatletter%
  \providecommand\color[2][]{%
    \errmessage{(Inkscape) Color is used for the text in Inkscape, but the package 'color.sty' is not loaded}%
    \renewcommand\color[2][]{}%
  }%
  \providecommand\transparent[1]{%
    \errmessage{(Inkscape) Transparency is used (non-zero) for the text in Inkscape, but the package 'transparent.sty' is not loaded}%
    \renewcommand\transparent[1]{}%
  }%
  \providecommand\rotatebox[2]{#2}%
  \newcommand*\fsize{\dimexpr\f@size pt\relax}%
  \newcommand*\lineheight[1]{\fontsize{\fsize}{#1\fsize}\selectfont}%
  \ifx\svgwidth\undefined%
    \setlength{\unitlength}{1708.23595903bp}%
    \ifx\svgscale\undefined%
      \relax%
    \else%
      \setlength{\unitlength}{\unitlength * \real{\svgscale}}%
    \fi%
  \else%
    \setlength{\unitlength}{\svgwidth}%
  \fi%
  \global\let\svgwidth\undefined%
  \global\let\svgscale\undefined%
  \makeatother%
  \begin{picture}(1,0.61835091)%
    \lineheight{1}%
    \setlength\tabcolsep{0pt}%
    \put(0,0){\includegraphics[width=\unitlength,page=1]{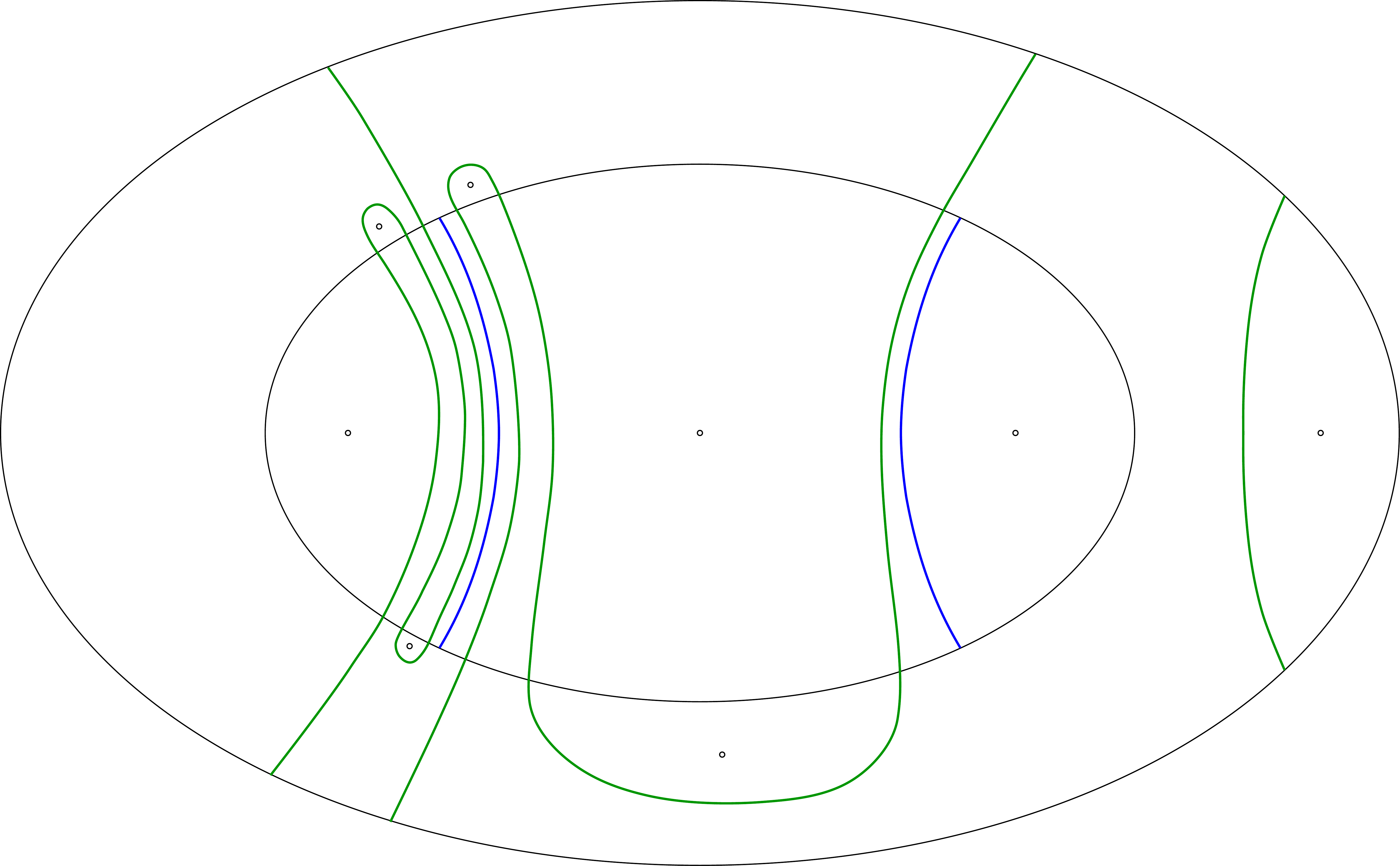}}%
    \put(0.29533628,0.48101557){\color[rgb]{0,0,1}\makebox(0,0)[lt]{\lineheight{1.25}\smash{\begin{tabular}[t]{l}$\ell_1^n$\end{tabular}}}}%
    \put(0.68609867,0.47040697){\color[rgb]{0,0,1}\makebox(0,0)[lt]{\lineheight{1.25}\smash{\begin{tabular}[t]{l}$\ell_2^n$\end{tabular}}}}%
    \put(0.23828804,0.36613759){\color[rgb]{0,0.58823529,0}\makebox(0,0)[lt]{\lineheight{1.25}\smash{\begin{tabular}[t]{l}$\ell_1^{n+1}$\end{tabular}}}}%
    \put(0.40241777,0.36613759){\color[rgb]{0,0.58823529,0}\makebox(0,0)[lt]{\lineheight{1.25}\smash{\begin{tabular}[t]{l}$\ell_2^{n+1}$\end{tabular}}}}%
    \put(0.89825994,0.36613759){\color[rgb]{0,0.58823529,0}\makebox(0,0)[lt]{\lineheight{1.25}\smash{\begin{tabular}[t]{l}$\ell_3^{n+1}$\end{tabular}}}}%
  \end{picture}%
\endgroup%

\caption{The figure illustrates two punctured disks $X_n$ (the smaller punctured disk) contained inside $X_{n+1}$ (the larger punctured disk). There are two homotopy classes of arcs on $X_n$, $\ell_i^n$, and three homotopy classes of arcs on $X_{n+1}$, $\ell_j^{n+1}$.}

\label{fig:transitionmap}
\end{figure}

\begin{ex}
Consider the punctured disks $X_n$ and $X_{n+1}$ pictured in Figure \ref{fig:transitionmap}. There are two homotopy classes of arcs $\ell_1^n,\ell_2^n$ on $X_n$ and three homotopy classes of arcs $\ell_1^{n+1},\ell_2^{n+1},\ell_3^{n+1}$ on $X_{n+1}$. The class $\ell_1^{n+1}$ traverses $\ell_1^n$ three times, the class $\ell_2^{n+1}$ traverses $\ell_1^n$ twice and $\ell_2^n$ once, while $\ell_3^{n+1}$ doesn't traverse $\ell_1^n$ or $\ell_2^n$. Thus, $\pi_n$ is represented by the $2\times 3$ matrix $\begin{pmatrix} 3 & 2 & 0 \\ 0 & 1 & 0 \end{pmatrix}$.
\end{ex}

\subsection{Examples of cones of measures}
\label{sec:examplecones}

In this section we use Theorem \ref{mainthm:inverselim} to give explicit descriptions of cones of transverse measures for certain examples of geodesic laminations.

\begin{figure}[h]
\centering

\begin{subfigure}[t]{0.8\textwidth}
\def\svgwidth{\textwidth}
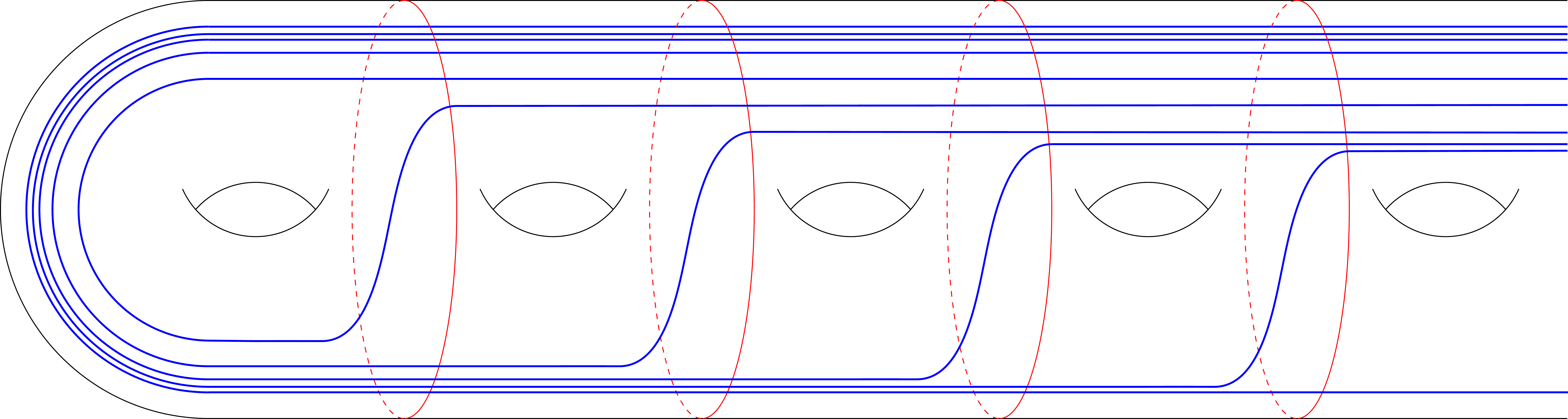
\caption{The lamination from Example \ref{ex:choquetexample1} is a union of countably many isolated proper leaves which limit to a single non-isolated proper leaf.}
\label{fig:choquetexample1}
\end{subfigure} 

\begin{subfigure}[t]{0.8\textwidth}
\def\svgwidth{\textwidth}
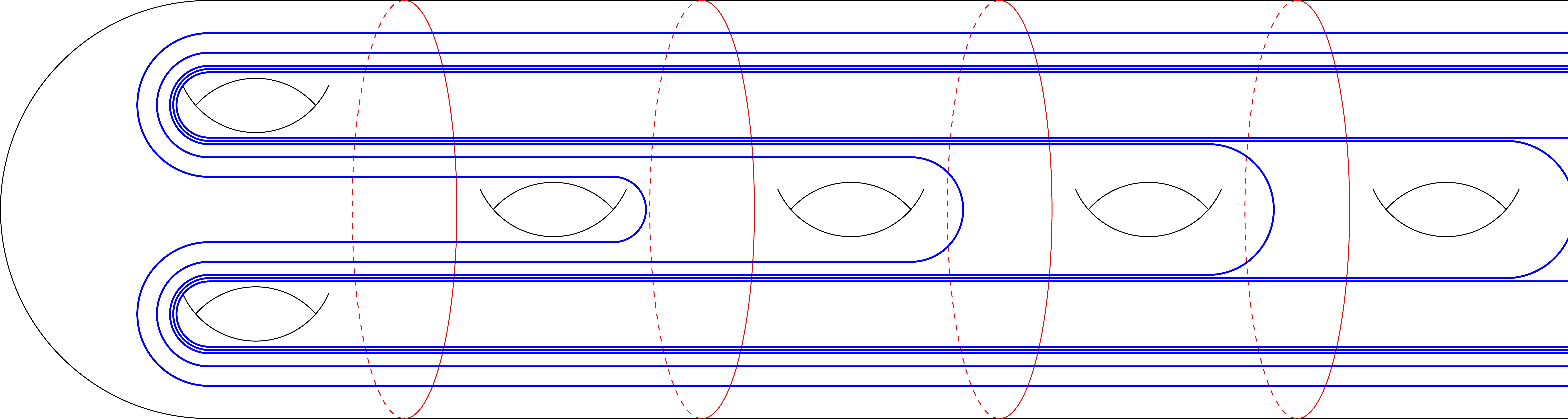
\caption{The lamination from Example \ref{ex:choquetexample2} is a union of countably many isolated proper leaves which limit to two non-isolated proper leaves.}
\label{fig:choquetexample2}
\end{subfigure} 

\begin{subfigure}[t]{0.8\textwidth}
\def\svgwidth{\textwidth}
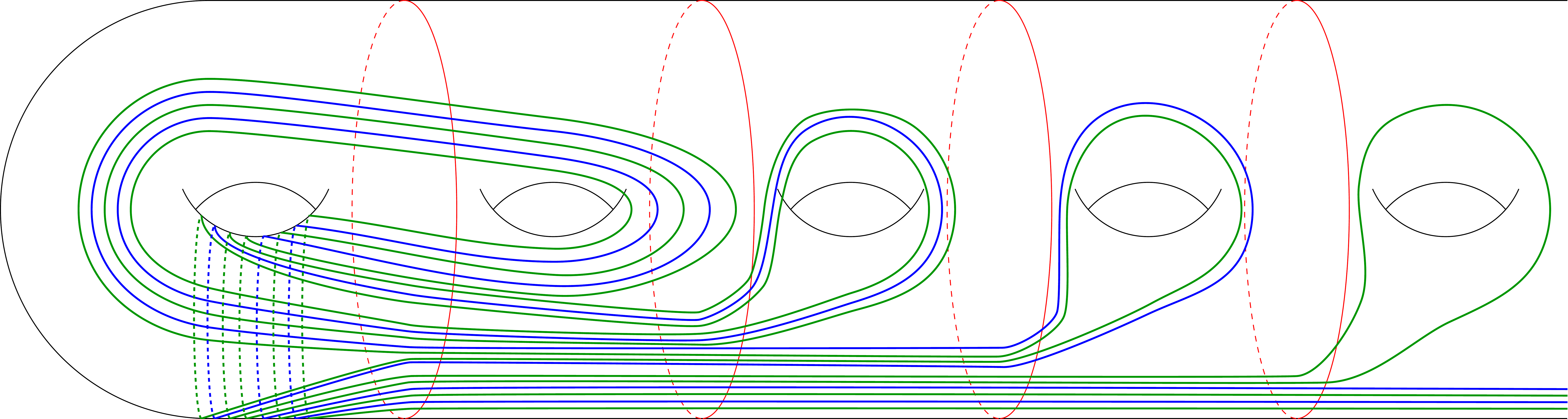
\caption{The lamination from Example \ref{ex:choquetexample3} is the closure of the two pictured non-proper leaves.}
\label{fig:choquetexample3}
\end{subfigure}

\caption{The laminations from Examples \ref{ex:choquetexample1}-\ref{ex:choquetexample3}.}
\end{figure}

\begin{ex}
\label{ex:choquetexample1}
Consider the lamination $\Lambda$ in Figure \ref{fig:choquetexample1}. Thus $\Lambda$ consists of a countable collection of isolated proper leaves $L_i$ that converge to a single proper leaf $L$ (which is not isolated). Recall that a leaf is \emph{isolated} when it has an open neighborhood disjoint from the rest of the lamination. There is a transverse arc $\tau$ intersecting each leaf exactly once and one may check that $\mathcal M(\Lambda)$ is linearly homeomorphic to $\mathcal M(\Lambda \cap \tau)$. We verify this using inverse limits.

An exhaustion $\{X_n\}$ is given by the surfaces bounded by the red curves. Thus $X_n$ has genus $n$ and one boundary component. There are $n$ homotopy classes of arcs $\ell_1^n,\ldots,\ell_n^n$ on $X_n$. Moreover, choosing the numbering correctly, $\ell_i^{n+1} \cap X_n$ is homotopic to $\ell_i^n$ for $1\leq i\leq n$ whereas $\ell_{n+1}^{n+1}\cap X_n$ is homotopic to $\ell_n^n$. Thus $\mathcal W(\Lambda)$ is the inverse limit of $\R_+\xleftarrow{\pi_1} \R_+^2 \xleftarrow{\pi_2} \R_+^3 \xleftarrow{\pi_3} \ldots$ where \[\pi_1=\begin{pmatrix} 1 & 1 \end{pmatrix}, \ \ \ \pi_2= \begin{pmatrix} 1 & 0 & 0 \\ 0 & 1 & 1 \end{pmatrix}, \ \ \ \pi_3= \begin{pmatrix} 1 & 0 & 0 & 0 \\ 0 & 1 & 0 & 0 \\ 0 & 0 & 1 & 1 \end{pmatrix}, \ \ \ \pi_4= \begin{pmatrix} 1 & 0 & 0 & 0 & 0 \\ 0 & 1 & 0 & 0 & 0 \\ 0 & 0 & 1 & 0 & 0 \\ 0 & 0 & 0 & 1 & 1 \end{pmatrix}, \ \ \ \ldots.\] We claim that $\mathcal M(\Lambda)\cong \mathcal W(\Lambda)$ is linearly homeomorphic to the cone \[C\subset \ell^1 \text{ defined by } C=\left\{(x,y_1,y_2,\ldots) : y_i\geq 0 \text{ for all } i \text{ and } x\geq \sum_{i=1}^\infty y_i\right\}\] where the space $\ell^1$ of summable sequences is endowed with its weak${}^*$ topology as the dual of the space $c_0$ of sequences convergent to 0, and $C$ is endowed with the subspace topology. We outline the proof. An element of the inverse limit $\mathcal W(\Lambda)$ has the form \[\overleftarrow{x}=\left( \begin{pmatrix} x_0^0\end{pmatrix}, \begin{pmatrix} x_1^1 \\ x_2^1 \end{pmatrix}, \begin{pmatrix} x_1^2 \\ x_2^2 \\ x_3^2\end{pmatrix}, \ldots \right)\] where \[x_0^0=x_1^1+x_2^1=x_1^1+x_2^2+x_3^2=x_1^1+x_2^2+x_3^3+x_4^3=\ldots \text{ and } x_n^n=x_n^{n+1}=x_n^{n+2}=\ldots \text{ for }n\geq 1.\] Thus, the element is determined by the sequence of non-negative numbers $(x_0^0,x_1^1,x_2^2,\ldots)$. Moreover, since $x_0^0\geq \sum_{i=1}^k x_i^i$ for each $k\geq 0$ we have that $\sum_{i=1}^\infty x_i^i\leq x_0^0<\infty$. Define a function $\mathcal W(\Lambda)\to C$ by sending $\overleftarrow{x}$ to $(x_0^0,x_1^1,\ldots)$, which lies in $C$. The inverse is given by sending an element $(x,y_1,y_2,\ldots)\in C$ to the element \[\left( \begin{pmatrix} x \end{pmatrix}, \begin{pmatrix} y_1 \\ x-y_1 \end{pmatrix}, \begin{pmatrix} y_1 \\ y_2 \\ x-y_1-y_2\end{pmatrix}, \ldots\right).\] One may check that these functions are linear and that the function $C\to \mathcal W(\Lambda)$ is continuous since the coordinate functions $x$ and $y_n$ are continuous. One may check that $\mathcal W(\Lambda)\to C$ is continuous by using the fact that $\ell^1$ is equipped with its weak${}^*$ topology from its predual $c_0$.
\end{ex}

\begin{ex}
\label{ex:choquetexample2}

Consider the lamination $\Lambda$ pictured in Figure \ref{fig:choquetexample2}. Thus $\Lambda$ consists of a countable collection of isolated proper leaves $L_i$ that converge to the union of two disjoint proper leaves $L$ and $L'$ (neither of which is isolated). An exhaustion is given by the surfaces bounded by the red curves again. Thus $X_n$ has genus $n+1$ and there are $n+1$ homotopy classes of arcs $\ell_1^n,\ldots,\ell_{n+1}^n$ on $X_n$. The numbering can be chosen so that $\ell_i^{n+1}\cap X_n$ is homotopic to $\ell_i^n$ for $1\leq i\leq n+1$ and so that $\ell_{n+2}^{n+1}\cap X_n$ is homotopic to the union of $\ell_1^n$ and $\ell_2^n$. Thus, $\mathcal W(\Lambda)$ is the inverse limit of $\R_+^2\xleftarrow{\pi_1} \R_+^3 \xleftarrow{\pi_2} \R_+^4 \xleftarrow{ \pi_3} \ldots$ where \[\pi_1=\begin{pmatrix} 1 & 0 & 1 \\ 0 & 1 & 1\end{pmatrix}, \ \ \ \pi_2=\begin{pmatrix} 1 & 0 & 0 & 1 \\ 0 & 1 & 0 & 1 \\ 0 & 0 & 1 & 0\end{pmatrix}, \ \ \ \pi_3=\begin{pmatrix} 1 & 0 & 0 & 0 & 1 \\ 0 & 1 & 0 & 0 & 1 \\ 0 & 0 & 1 & 0 & 0 \\ 0 & 0 & 0 & 1 & 0\end{pmatrix}, \ \ \ \ldots. \] An element of the inverse limit has the form \[\left(\begin{pmatrix} x_1^0 \\ x_2^0 \end{pmatrix}, \begin{pmatrix} x_1^1 \\ x_2^1 \\ x_3^1\end{pmatrix}, \begin{pmatrix} x_1^2 \\ x_2^2 \\ x_3^2 \\ x_4^2 \end{pmatrix},\ldots\right).\] where \[x_1^0=x_1^1 +x_3^1=x_1^2+x_3^1+x_4^2=x_1^3+x_3^1+x_4^2+x_5^3=\ldots\] and \[x_2^0=x_2^1+x_3^1=x_2^2+x_3^1+x_4^2=x_2^3+x_3^1+x_4^2+x_5^3=\ldots\] and $x_{n+2}^n=x_{n+2}^{n+1}=x_{n+2}^{n+2}=\ldots$ for $n\geq 1$. Using similar techniques as in Example \ref{ex:choquetexample1} one may show that $\mathcal M(\Lambda)\cong \mathcal W(\Lambda)$ is isomorphic to the cone \[C\subset \ell^1 \text{ defined by } C=\left\{ (x_1,x_2,y_1,y_2,y_3,y_4,\ldots) : y_i \geq 0 \text{ for all } i \text{ and } x_1\geq \sum y_i \text{ and } x_2 \geq \sum y_i \right\}\] where again $\ell^1$ is endowed with its weak${}^*$ topology as the dual of $c_0$.
\end{ex}

\begin{ex}
\label{ex:choquetexample3}
Consider the lamination $\Lambda$ pictured in Figure \ref{fig:choquetexample3}. The figure shows (the beginnings of) two leaves of the lamination. The lamination $\Lambda$ is the closure of these two leaves. An exhaustion is given by the surfaces $X_n$ bounded by the red curves. Observe that any leaf of the closure $\Lambda$ intersected with $X_n$ is homotopic to a component of the intersection of either the blue leaf or the green leaf with $X_n$. Thus, to describe the cone $\mathcal W(\Lambda)$ it suffices to study the intersections of the green leaf and the blue leaf with the compact subsurfaces $X_n$. We refer the reader also to Figure \ref{fig:choquetconstruction}. This illustrates the construction of the blue and green leaves from Figure \ref{fig:choquetexample3}. The reader may continue the construction inductively. Moreover, $\Lambda \cap X_n$ consists of two homotopy classes $\ell_1^n$ (the green arcs pictured in Figure \ref{fig:choquetconstruction}) and $\ell_2^n$ (the blue arcs pictured in Figure \ref{fig:choquetconstruction}). At each step, there is a value of $i$ for which $\ell_i^{n+1}\cap X_n$ is homotopic to $\ell_i^n$ while for the other arc $\ell_{3-i}^{n+1}$, $\ell_{3-i}^{n+1}\cap X_n$ consists of one arc homotopic to $\ell_1^n$ and another arc homotopic to $\ell_2^n$. Moreover, the value of $i$ with this property alternates between 1 and 2 at each step. Thus, $\mathcal W(\Lambda)$ is the inverse limit of $\R_+^2\xleftarrow{\pi_1}\R_+^2 \xleftarrow{\pi_2} \R_+^2 \xleftarrow{ \pi_3}\ldots$ where \[\begin{pmatrix} 1 & 1 \\ 0 & 1 \end{pmatrix}=\pi_1=\pi_3=\pi_5=\ldots \text{ and} \begin{pmatrix} 1 & 0 \\ 1 & 1 \end{pmatrix}=\pi_2=\pi_4=\pi_6=\ldots.\] Observe that \[ M_o \vcentcolon = \pi_i \circ \pi_{i+1}=\begin{pmatrix} 2 & 1 \\ 1 & 1 \end{pmatrix} \text{ for } i \text{ odd and }  M_e \vcentcolon = \pi_i \circ \pi_{i+1}=\begin{pmatrix} 1 & 1 \\ 1 & 2 \end{pmatrix}  \text{ for } i \text{ even}.\]

Consider the cones $C_n=C(X_n)=\R_+^2$ and the intersection of the images of $C_m$ in $C_n$. That is, consider $\bigcap_{m=n}^\infty \pi_{nm}(C_m)$ where $\pi_{nm}=\pi_n \circ \pi_{n+1}\circ \cdots\circ \pi_{m-1}$.
This is contained in the intersection of cones $\bigcap_{j=0}^\infty M_o^j (\R_+^2)$ or the intersection $\bigcap_{j=0}^\infty M_e^j (\R_+^2)$ depending on whether $n$ is odd or even. In the odd case, this intersection is equal to the ray spanned by $ v_o$ where $v_o$ is the (positive) attracting eigenvector $v_o = (\phi,1)$ of $M_o$, where $\phi$ is the golden ratio $(1+\sqrt{5})/2$. In the even case the intersection is equal to the span $\R_+v_e$ where $v_e$ is the attracting eigenvector $v_e=(\phi-1,1)$ of $M_e$. Thus, we see that $\mathcal W(\Lambda)$ is in fact equal to the limit of an inverse system of rays \[ \R_+v_o\ \xleftarrow{\pi_1} \  \R_+ v_e \  \xleftarrow{\pi_2} \  \R_+ v_o  \ \xleftarrow{ \pi_3} \  \R_+ v_e \ \xleftarrow{ \pi_4 } \ldots\] and one may check that each map $\pi_n$ in this inverse system is surjective. Thus $\mathcal W(\Lambda)$ is linearly homeomorphic to a ray $\R_+$. That is, $\Lambda$ has a single non-zero transverse measure up to scaling.
\end{ex}

\begin{figure}[h]
\centering

\begin{tabular}{c c}
\def\svgwidth{0.5\textwidth}
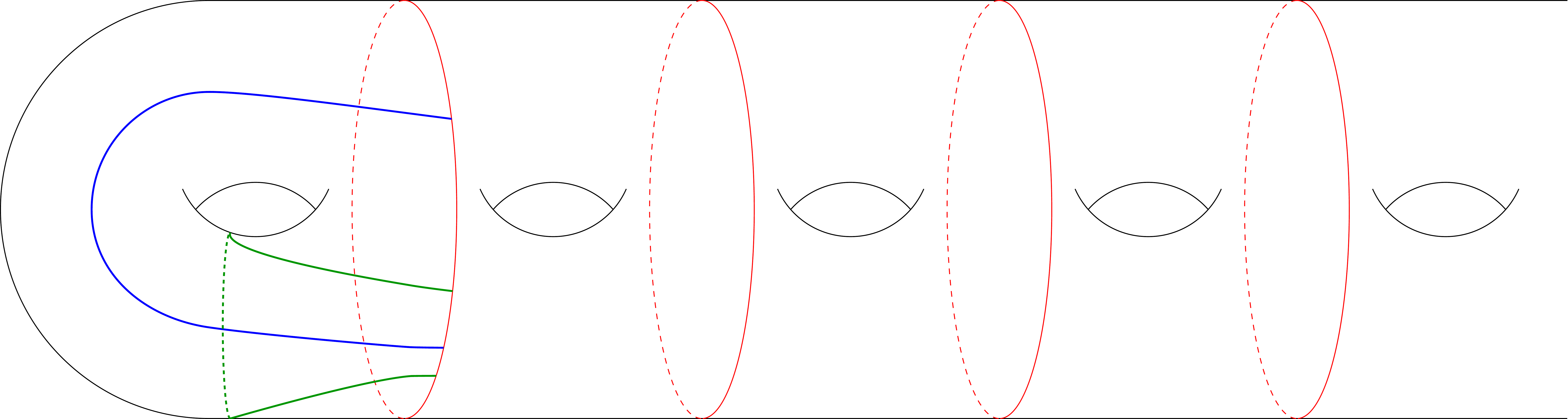 &

\def\svgwidth{0.5\textwidth}
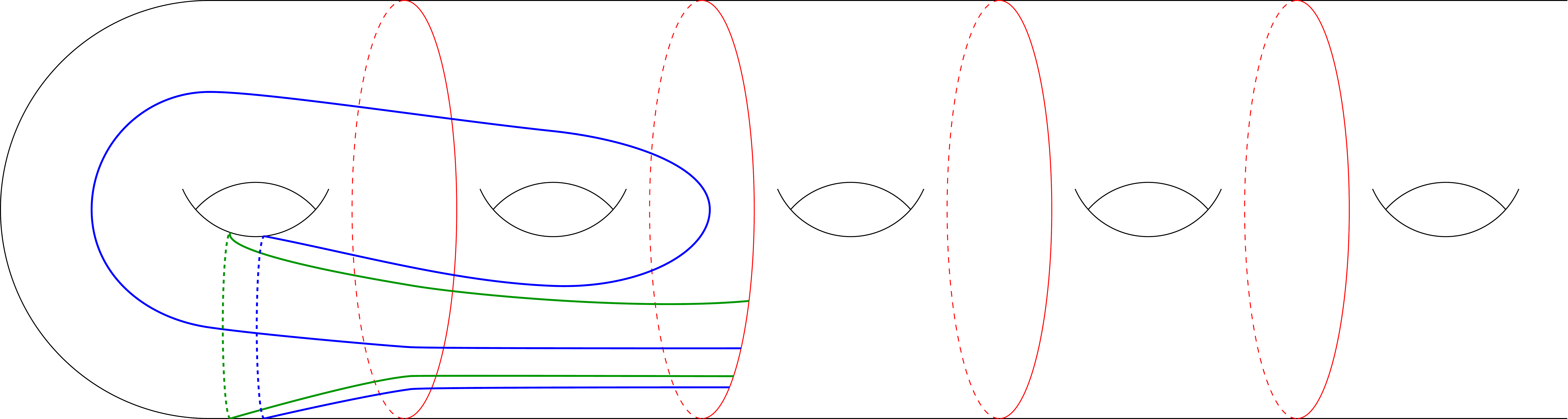 \\

\def\svgwidth{0.5\textwidth}
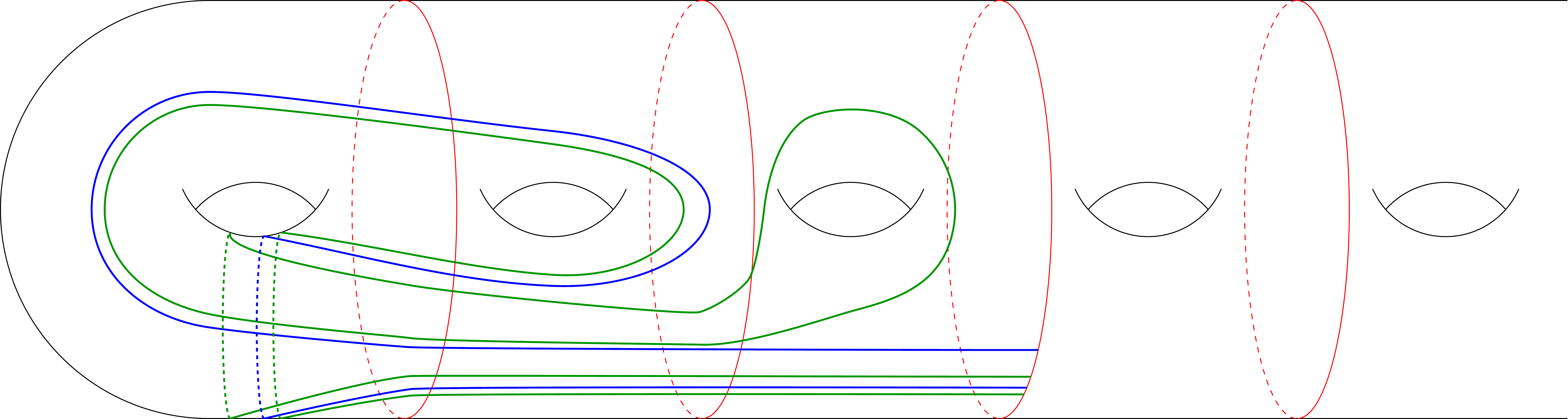 &

\def\svgwidth{0.5\textwidth}
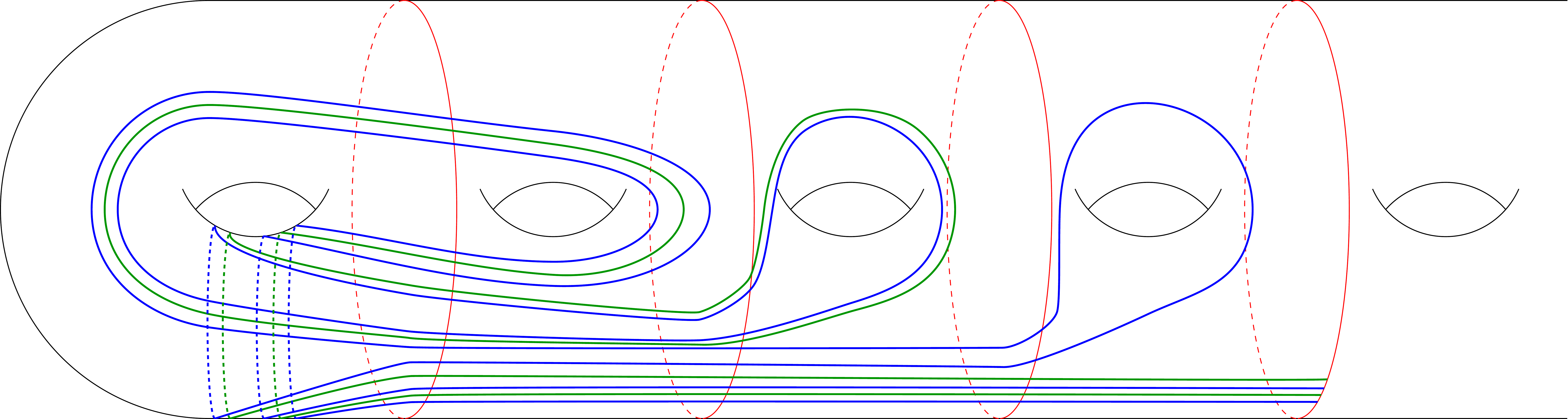 
\end{tabular}

\caption{The construction of the two leaves from Example \ref{ex:choquetexample3}. At each new step one arc is extended to traverse both arcs from the previous step.}
\label{fig:choquetconstruction}
\end{figure}

\begin{figure}[h]
\centering
\def\svgwidth{0.5\textwidth}
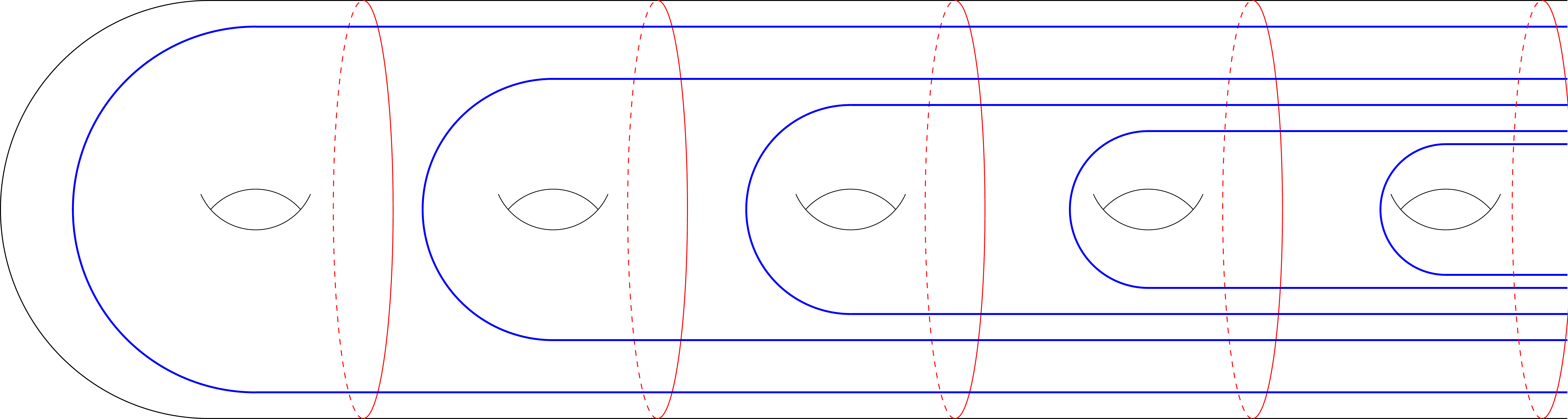

\caption{A lamination consisting of countably many isolated proper leaves.}
\label{fig:nobase1}
\end{figure}

\begin{ex}
\label{ex:nobase1}
Consider the lamination $\Lambda$ in Figure \ref{fig:nobase1}, consisting of countably many isolated proper leaves which exit out the single end of the surface. Using the pictured exhaustion, we see that $\mathcal M(\Lambda)$ is the limit of $\R_+ \xleftarrow{\pi_1} \R_+^2 \xleftarrow{\pi_2} \R_+^3 \xleftarrow{\pi_3} \ldots$ where \[\pi_1=\begin{pmatrix} 1 & 0 \end{pmatrix}, \ \ \ \pi_2=\begin{pmatrix} 1 & 0 & 0 \\ 0 & 1 & 0 \end{pmatrix},  \ \ \ \pi_3=\begin{pmatrix} 1 & 0 & 0 & 0 \\ 0 & 1 & 0 & 0 \\ 0 & 0 & 1 & 0 \end{pmatrix}, \ \ \ \ldots.\] From this we see that $\mathcal M(\Lambda)$ is linearly homeomorphic to $\R_+^{\mathbb{N}}$, a countable product of rays $\R_+$, with the product topology. 
\end{ex}

\subsection{Completing the proof of Theorem \ref{thm:limithomeo}}
\label{sec:completingproof}

In this section we complete the proof of Theorem \ref{thm:limithomeo}. Our main new ingredient will be used to show injectivity of the map $\Psi$.

\begin{lem}
\label{lem:sameintnum}
Let $\Lambda$ be a geodesic lamination on the hyperbolic surface $X$ of the first kind. Let $\tau$ be an arc transverse to $\Lambda$. Consider the exhaustion $X=\bigcup_{n=1}^\infty X_n$ by punctured compact subsurfaces and the homotopy classes of arcs $\{\ell_i^n\}_{i=1}^{r(n)}$ contained in $X_n$ for each $n$. Then for any $m$ large enough, and any $i$ between 1 and $r(m)$, all of the arcs in the homotopy class $\ell_i^m$ intersect $\tau$ the same number of times. 
\end{lem}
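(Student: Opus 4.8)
The plan is to fix once and for all an index $N$ with $\tau \subset \operatorname{int} X_N$. For every $m \geq N$ and every arc $\alpha$ that is a component of a leaf intersected with $X_m$, all points of $\alpha \cap \tau$ lie in $X_N$, since $\tau$ is disjoint from $\partial X_m$ and from the punctures; in particular the endpoints of $\alpha$ (which lie on $\partial X_m$ or run to punctures) are irrelevant to the count $|\alpha \cap \tau|$. The key preliminary observation I would record is a uniform stability margin: because $\tau \cap \Lambda$ is compact, $\tau$ meets every leaf transversally, and the endpoints of $\tau$ avoid $\Lambda$, there is a $\delta > 0$ so that if two leaves run $C^0$-within $\delta$ of one another along $X_N$ then they meet $\tau$ in the same number of points. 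Indeed, for such nearby leaves the transverse crossings with $\tau$ are in canonical bijection, and no crossing can be created or destroyed, since that could only happen at a tangency (excluded by transversality) or by a crossing sliding off an endpoint of $\tau$ (excluded since the endpoints lie a definite distance from $\Lambda$). Thus it suffices to prove that for $m$ large, any two arcs in one class $\ell_i^m$ are cut from leaves that are $\delta$-close along $X_N$.

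The second and main step is to deduce this closeness from the first-kind hypothesis. Two arcs in the class $\ell_i^m$ are parallel in $X_m$, and two adjacent ones cobound an embedded rectangle $Q_m \subset X_m$ containing no leaves of $\Lambda$ in its interior (any leaf inside would run from end to end and hence be parallel, i.e. in the same class, contradicting adjacency); so $Q_m$ lies in a single complementary region of $\Lambda$. If two same-class leaves failed to be $\delta$-close over $X_N$ for arbitrarily large $m$, the nested rectangles $Q_m$ would exhaust an embedded leaf-free bi-infinite strip of definite width between the two leaves. Lifting to $\widetilde X$, such a strip is a complementary region whose ideal boundary contains an arc of $\partial_\infty \widetilde X$, contradicting that the limit set is all of $\partial_\infty \widetilde X$, i.e. that $X$ is of the first kind. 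Hence any two leaves that are more than $\delta$ apart over $X_N$ have $X_m$-segments that become non-parallel, so they land in different classes once $m$ is large. A compactness argument over the compact set $X_N$ (only finitely many ``$\delta$-separated patterns'' of $\tau \cap \Lambda$ occur) would then upgrade these pointwise thresholds to a single level $M$ beyond which every class consists of pairwise $\delta$-close leaves.

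Combining the two steps, for $m \geq M$ every class $\ell_i^m$ is made of leaves that are pairwise $\delta$-close over $X_N$, so by the stability margin they all cross $\tau$ the same number of times, proving the lemma. I expect the genuine difficulty to be concentrated in the second step: making precise the implication ``parallel over all large $X_m \Rightarrow$ uniformly $\delta$-close over $X_N$'', and in particular invoking the first-kind hypothesis in the correct form to rule out leaf-free complementary strips (equivalently, ideal boundary arcs), then passing from a threshold that depends on a single pair of leaves to one $M$ valid simultaneously for the infinitely many leaves meeting $\tau$. A minor additional check is that the margin $\delta$ also governs the sub-arcs of the $\ell_i^m$ that spiral toward compact minimal sublaminations inside $X_N$, so that their contribution to $|\alpha \cap \tau|$ is likewise constant across each class.
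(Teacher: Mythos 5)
Your overall strategy (a local-constancy statement for the number of intersections with $\tau$ within each parallel family over $X_N$, combined with the claim that arcs lying in a common class $\ell_i^m$ become uniformly close over $X_N$ as $m\to\infty$) is close in spirit to the paper's argument, but the step you yourself identify as the crux is carried out with an invalid use of the first-kind hypothesis. You argue that a leaf-free bi-infinite strip between two leaves would lift to a complementary region whose closure meets $\partial_\infty\widetilde X$ in an arc, ``contradicting that the limit set is all of $\partial_\infty\widetilde X$.'' There is no such contradiction: the limit set is a property of the $\pi_1(X)$-orbit, not of $\Lambda$, and a lamination on a first-kind surface can perfectly well have complementary regions whose lifts contain ideal boundary arcs --- every non-filling lamination does, including the paper's Examples \ref{ex:choquetexample1}--\ref{ex:nobase1}, which consist of isolated leaves with enormous complementary regions. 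What the first-kind hypothesis actually provides (Lemma \ref{lem:firstkindgeodesics}) is that the lifts $\widetilde{X_n}$ of the exhaustion union to all of $\widetilde{X}$, so that for any two \emph{distinct} geodesics of $\widetilde{X}$ the boundary geodesics of $\widetilde{X_n}$ eventually separate their ideal endpoints and force their $\widetilde{X_n}$-arcs into different homotopy classes rel $\partial_0\widetilde{X_n}$. Had your leaf-free strip persisted for all $m$, the correct conclusion would be that the two lifted leaves share both ideal endpoints and hence coincide --- not that $X$ fails to be of the first kind. So the implication ``same class in $X_m$ for all $m$ $\Rightarrow$ $\delta$-close over $X_N$'' must be re-derived from this separation statement; as written it rests on a false claim.

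A second, smaller gap is the passage from a threshold valid for one pair of leaves to a single $M$ valid for all arcs simultaneously. The appeal to ``finitely many $\delta$-separated patterns'' is not set up: the parameter spaces $A_i^m$ are Cantor sets, so ``adjacent'' arcs and nested rectangles $Q_m$ across varying $m$ are not well defined, and a single class $\ell_i^m$ returns to $X_N$ several times, so one must control all returns and all $\pi_1$-translates at once. The paper handles exactly this point with the multiset bookkeeping (the sets $\mathcal A$, $\mathcal B$, the extremal arcs $L,M,L',M'$) and the translation argument in $\widetilde{X_m}$; some such device is needed in your approach as well. Your first step --- the stability margin $\delta$, i.e.\ local constancy of the count $\#(\alpha\cap\tau)$ as $\alpha$ varies within a parallel family of $\Lambda\cap X_N$ --- is sound for arcs in the classes $\ell_j^N$ (and corresponds to the paper's subdivision of $\tau$ into sub-transversals meeting each family at most once), but it does not by itself close the main gap.
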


Before giving the proof we give an informal description. Each homotopy class $\ell_i^n$ forms a strip $A_i^n\times I_i^n$ where $A_i^n$ is compact totally disconnected and $I_i^n$ is a closed interval in $\R$. The transversal $\tau$ intersects these strips but may not pass all the way through each time. Thus, $\tau$ may turn around between two arcs $L$ and $M$ of $A_i^n\times I_i^n$ or have an endpoint between them. For $m\geq n$ the strips $A_j^m\times I_j^m$ traverse the strip $A_i^n \times I_i^n$ and partition it. The partition is eventually fine enough to separate $L$ and $M$ and this fixes the issue.

We also introduce some notation. If $Y\subset \widetilde{X}$ is a closed convex subset, then $\partial_0 Y$ denotes the boundary of $Y$ as a subset of $\widetilde{X}$. The notation $\partial_\infty Y$ denotes the limit set of $Y$ in $\partial_\infty \widetilde X$, i.e. the closure of $Y$ in $\widetilde X\cup \partial_\infty \widetilde X$ intersected with $\partial_\infty \widetilde X$. Before proving Lemma \ref{lem:sameintnum}, we prove the following general fact, which will be used several times in the sequel. Note that for a punctured compact subsurface $Y\subset X$, the pre-image of $Y$ in the universal cover $\widetilde{X}$ consists of a family of disjoint closed convex subsets of $\widetilde{X}$.

\begin{lem}
\label{lem:firstkindgeodesics}
Let $X$ be a hyperbolic surface of the first kind. Let $X=\bigcup_{n=1}^\infty X_n$ be an exhaustion by punctured compact subsurfaces. Let $\widetilde{X}$ be the universal cover of $X$, choose a basepoint $\ast \in \widetilde{X}$ in the pre-image of $X_1$, and let $\widetilde{X_n}$ be the unique component of the pre-image of $X_n$ in $\widetilde{X}$ containing $\ast$. Then $\bigcup_{n=1}^\infty \widetilde{X_n} = \widetilde{X}$. Moreover, for two distinct geodesics $L,M\subset \widetilde{X}$ and any $n$ sufficiently large, the arcs of intersection $L\cap \widetilde{X_n}$ and $M\cap \widetilde{X_n}$ are not homotopic in $\widetilde{X_n}$ through homotopies preserving $\partial_0 \widetilde{X_n}$.
\end{lem}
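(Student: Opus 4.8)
The plan is to prove the two assertions separately. For the first, $\bigcup_n \widetilde{X_n} = \widetilde{X}$, I would argue that since $X = \bigcup_n X_n$ is an exhaustion and each $\widetilde{X_n}$ is the component of the preimage of $X_n$ containing the basepoint $\ast$, the union $\bigcup_n \widetilde{X_n}$ is an increasing union of connected, simply connected sets (each $\widetilde{X_n}$ is convex, hence simply connected, and lifts $X_n$ homeomorphically since $X_n \hookrightarrow X$ is $\pi_1$-injective onto a subgroup whose action on $\widetilde{X_n}$ gives the covering $\widetilde{X_n} \to X_n$). The union $U = \bigcup_n \widetilde{X_n}$ is open; I claim it is also closed, hence all of the connected space $\widetilde{X}$. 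Any point $y \in \widetilde{X}$ projects into some $X_n$, so a small ball around $\pi(y)$ lies in $X_{n+1}$ say; the component of the preimage of that ball containing $y$ lifts into some component of $\pi^{-1}(X_{n+1})$, and one must check this component is the one containing $\ast$. This is where the hypothesis that $X$ is of the first kind enters: it guarantees that the limit set $\partial_\infty \widetilde{X_n}$ fills up $\partial_\infty \widetilde{X}$ in the limit, so that no component of $\pi^{-1}(X_m)$ can remain permanently disjoint from the one containing $\ast$. I expect the cleanest formulation is to note that $\widetilde{X_n}$ is the convex hull of $\partial_\infty \widetilde{X_n}$ (its nearest-point retract), that these limit sets nest and exhaust $\partial_\infty \widetilde{X}$ as $n \to \infty$ by first-kindness, and conclude that the convex hulls exhaust all of $\widetilde{X}$.

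For the second assertion, suppose $L, M \subset \widetilde{X}$ are distinct geodesics. The heart of the matter is to show that once $n$ is large enough, the intersection arcs $L \cap \widetilde{X_n}$ and $M \cap \widetilde{X_n}$ cannot be homotoped to each other in $\widetilde{X_n}$ rel $\partial_0\widetilde{X_n}$. First I would observe that since $\widetilde{X_n}$ is simply connected, two properly embedded arcs in it are homotopic rel $\partial_0 \widetilde{X_n}$ (through homotopies preserving $\partial_0\widetilde{X_n}$) precisely when they have the same pair of endpoints on $\partial_0 \widetilde{X_n}$, or more precisely cut off the same components — so the question reduces to whether $L$ and $M$ enter and exit $\widetilde{X_n}$ through the same boundary arcs of $\partial_0 \widetilde{X_n}$ in the same configuration. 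I would translate this into a statement about endpoints at infinity: as geodesics, $L$ and $M$ have distinct endpoint pairs in $\partial_\infty \widetilde{X}$. By first assertion and first-kindness, $\partial_\infty\widetilde{X_n}$ exhausts the circle, so for $n$ large the four (or fewer) endpoints of $L$ and $M$ are separated by points of the limit set $\partial_\infty\widetilde{X_n}$, which in turn forces $L$ and $M$ to cross different boundary geodesics of $\widetilde{X_n}$.

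The main obstacle I anticipate is making precise the claim that a homotopy rel $\partial_0\widetilde{X_n}$ forces the two arcs to share endpoint-configurations, and then the geometric step that distinct endpoint pairs on the circle are eventually separated by $\partial_0\widetilde{X_n}$. For the first, the definition of homotopy of transversals in the paper requires the preimage of $\Lambda$ to foliate the homotopy by horizontal segments, but here the relevant notion is just homotopy of arcs rel boundary in the simply connected region $\widetilde{X_n}$; I would make sure to use that $\widetilde{X_n}$ is convex and that its boundary $\partial_0\widetilde{X_n}$ consists of geodesic arcs (lifts of $\partial X_n$) so that an arc's homotopy class rel boundary is determined by which boundary components its endpoints lie on and how it separates $\partial_\infty\widetilde{X_n}$. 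For the separation step, I would use that two distinct complete geodesics have limit sets consisting of distinct point-pairs $\{L^+, L^-\} \neq \{M^+, M^-\}$ on $\partial_\infty\widetilde{X}$; since the covering group acts with dense orbits on the circle (first kind), the boundary-at-infinity endpoints of the lifts of $\partial X_n$ become dense, so for $n$ large a lift of $\partial X_n$ separates $L$ from $M$ at infinity. I would then conclude that the arcs cross different boundary geodesics of $\widetilde{X_n}$ and hence are not homotopic rel $\partial_0\widetilde{X_n}$, completing the proof.
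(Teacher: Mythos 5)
Your proposal is correct and follows essentially the same route as the paper: the union $\bigcup_n\widetilde{X_n}$ is a convex, $\pi_1(X)$-invariant set and hence all of $\widetilde{X}$ by first-kindness, and two distinct geodesics are eventually distinguished because the complementary gaps of $\partial_\infty\widetilde{X_n}$ in the circle shrink to points, so an unshared ideal endpoint of $L$ is eventually separated from both endpoints of $M$ by a boundary geodesic of $\widetilde{X_n}$. Two small points to tighten: the sets $\partial_\infty\widetilde{X_n}$ do not literally exhaust the circle (they are nowhere dense) --- what you actually need and use is that the nested gaps shrink to single points; and your "cross different boundary geodesics" conclusion should be supplemented by the case, treated separately in the paper, where the distinguishing endpoint of $L$ already lies in $\partial_\infty\widetilde{X_n}$, so that $L\cap\widetilde{X_n}$ ends at an ideal point rather than on $\partial_0\widetilde{X_n}$.
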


\begin{proof}
The component $\widetilde{X_n}$ is invariant under the fundamental group $\pi_1(X_n)$. Consequently the union $\bigcup_{n=1}^\infty \widetilde{X_n}$ is a convex subset of $\widetilde{X}$ which is invariant under $\pi_1(X)$, and therefore we have $\bigcup_{n=1}^\infty \widetilde{X_n} = \widetilde{X}$. 

For the last sentence of the lemma, note that at least one endpoint of $L$ in $\partial_\infty \widetilde{X}$ is not shared by $M$. Hence, for any $n$ sufficiently large, either $L$ has an endpoint in $\partial_\infty \widetilde{X_n}$ which is not contained in $M \cap \partial_\infty \widetilde{X_n}$, or $L\cap \widetilde{X_n}$ has an endpoint in $\partial_0 \widetilde{X_n}$ which is not contained in $M \cap \partial_0 \widetilde{X_n}$. In either case, $L\cap \widetilde{X_n}$ is not homotopic to $M\cap \widetilde{X_n}$.
\end{proof}

\begin{proof}[Proof of Lemma \ref{lem:sameintnum}]

Choose $n$ large enough that $\tau$ is contained in $X_n$. We consider the lamination $\Lambda_n$ and the homotopy classes $\{\ell_i^n\}_{i=1}^{r(n)}$. From $\Lambda_n$, remove all of the compact minimal sub-laminations $\Gamma_i^n$ and all of the geodesics accumulating onto them. Denote by $\Lambda_n'$ the sub-lamination which remains after this operation, consisting exactly of the arcs in all of the classes $\ell_i^n$.

We make one simplifying assumption on $\tau$, which we will remove at the end of the proof: for each $\ell_i^n$, each arc in the homotopy class intersects $\tau$ exactly 0 or 1 times. This has the following consequence. If $\tau$ intersects an arc in the homotopy class $\ell_i^n$ then either (1) $\tau$ crosses every arc in $\ell_i^n$ exactly once or (2) it intersects some of them once, and has an endpoint between two arcs in $\ell_i^n$. In particular, there are some values of $i$ such that $\tau$ crosses all arcs in $\ell_i^n$ once and at most two values of $i$ such that $\tau$ crosses some of the arcs in $\ell_i^n$ once and doesn't cross the others.

Consider the universal cover $\widetilde{X}$. Lift $\tau$ to an arc $\widetilde{\tau}$. The arc $\widetilde{\tau}$ is contained in a unique component $\widetilde{X_n}$ of the pre-image of $X_n$ in $\widetilde{X}$. Thus, $\widetilde{X_n}$ is a universal cover for $X_n$. Define $\widetilde{\Lambda_n'}$ to be the pre-image of $\Lambda_n'$ in $\widetilde{X_n}$. For each $i$, the arcs in the pre-image of $\ell_i^n$ are partitioned into families of arcs which are homotopic in $\widetilde{X_n}$ through homotopies preserving the boundary $\partial_0 \widetilde{X_n}$ setwise. Namely, if $\ell_i^n$ joins $p$ to $q$ where $p,q$ can each be either punctures or boundary components of $X_n$, then $p$ and $q$ each either lift to geodesics of $\partial_0 \widetilde{X}_n$ (lifts of boundary components) or to ends of $\partial_\infty \widetilde{X_n}$. A homotopic family of arcs in the pre-image of $\ell_i^n$ joins a lift of $p$ to a lift of $q$. See Figure \ref{fig:transversalstrips}.

Consider the families of homotopic arcs in $\widetilde{\Lambda_n'}$ that intersect $\widetilde{\tau}$. This yields two finite multi-sets $\mathcal A$ and $\mathcal B$ with $|\mathcal A|=|\mathcal B|$, such that:
\begin{itemize}
\item each element of $\mathcal A$ (or $\mathcal B$) is either a boundary component of $\widetilde{X_n}$ in $\partial_0 \widetilde{X_n}$ or an end of $\widetilde{X_n}$ in $\partial_\infty \widetilde X$;
\item enumerating the elements of $\mathcal A$ as $\{a_1,\ldots,a_r\}$ and the elements of $\mathcal B$ as $\{b_1,\ldots,b_r\}$, $\widetilde{\tau}$ intersects $\widetilde{\Lambda_n'}$ \emph{only} in the arcs joining $a_i$ to $b_i$ for $i=1,\ldots,r$.
\end{itemize}
We allow $\mathcal A$ and $\mathcal B$ to be multi-sets, since for instance, $\widetilde{\tau}$ may be intersected by two families of arcs which join a common boundary component of $\widetilde{X_n}$ to two different boundary components of $\widetilde{X_n}$. Moreover, we choose the numbering so that $\widetilde{\tau}$ intersects each arc of $\widetilde{\Lambda_n'}$ joining $a_i$ to $b_i$ \emph{exactly once} unless $i=1$ or $r$. For $i=1$ or $r$, $\widetilde{\Lambda_n'}$ may intersect \emph{only some} of the arcs of $\widetilde{\Lambda_n'}$ joining $a_i$ to $b_i$ (again intersecting each arc at most once).

\begin{figure}[h]
\centering

\begin{tabular}{c c}
\begin{subfigure}[t]{0.5\textwidth}
\centering
\def\svgwidth{0.85\textwidth}
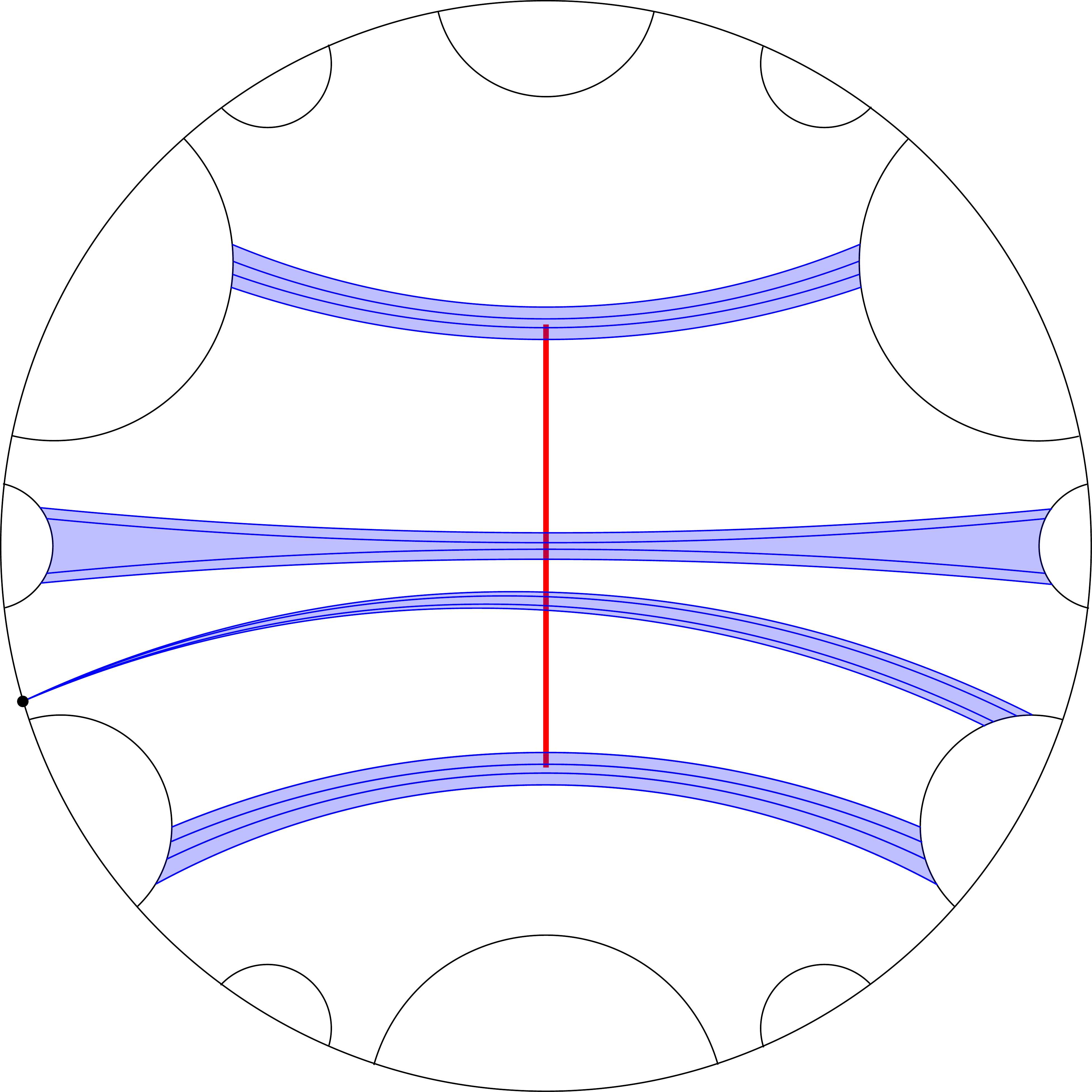
\caption{The cover $\widetilde{X_n}$ is bounded by the solid black geodesics. The lift $\widetilde{\tau}$ is drawn in red along with the strips of $\widetilde{\Lambda_n}$ that intersect it. The elements of $\mathcal A$ are the $a_i$ and the elements of $\mathcal B$ are the $b_i$. Note that all $a_i$ and $b_i$ are geodesics of $\partial_0 \widetilde{X_n}$ except for $a_2$ which is an end of $\widetilde{X_n}$. Note also that $b_1=b_2$.}
\label{fig:transversalstrips}
\end{subfigure} &

\begin{subfigure}[t]{0.5\textwidth}
\centering
\def\svgwidth{0.85\textwidth}
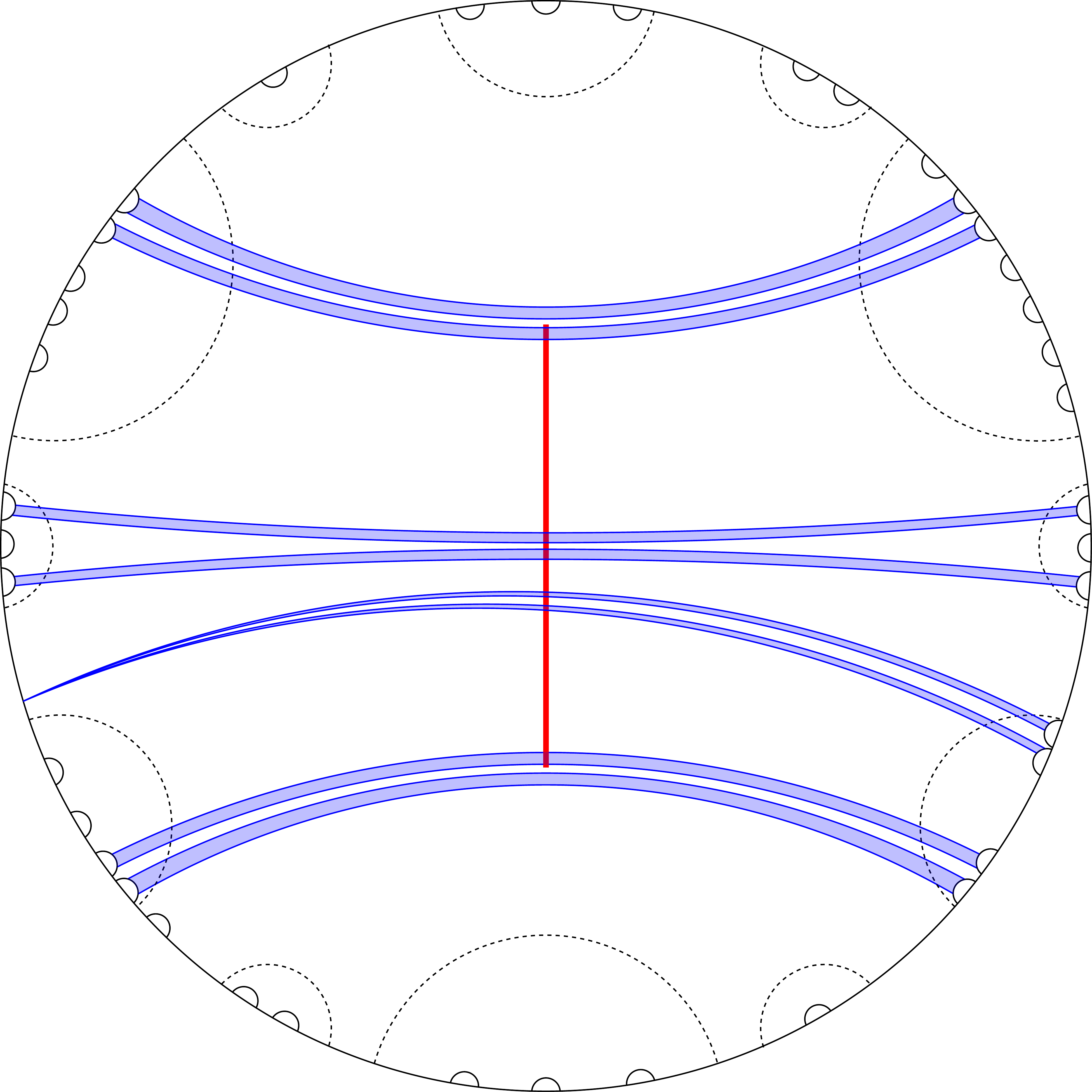
\caption{The cover $\widetilde{X_m}$ is bounded by the solid black geodesics. Boundary components of $\widetilde{X_n}$ are indicated by dotted lines. For each strip of geodesics of $\widetilde{\Lambda}$ in $\widetilde{X_m}$, its geodesics all either intersect $\widetilde{\tau}$ exactly once or all intersect it zero times.}
\label{fig:transversalextended}
\end{subfigure}

\end{tabular}

\caption{Moving to a larger surface $\widetilde{X_m}$ to separate geodesics in $\widetilde{X_n}$.}
\end{figure}

Now, if the arcs joining $a_1$ to $b_1$ do not all intersect $\widetilde{\tau}$ then there is a \emph{last} such arc $L$ which intersects $\widetilde{\tau}$ and a \emph{first} such arc $M$ which does not intersect $\widetilde{\tau}$. That is, $L$ and $M$ separate all the arcs from $a_1$ to $b_1$ which do not intersect $\widetilde \tau$ from all the arcs from $a_1$ to $b_1$ which do. Similarly, among the arcs joining $a_r$ to $b_r$, there is (possibly) a \emph{last} such arc $L'$ which intersects $\widetilde{\tau}$ and a \emph{first} such arc $M'$ which does not intersect $\widetilde{\tau}$. Now $L,M,L',M'$ may be extended to bi-infinite geodesics in $\widetilde{X}$. By Lemma \ref{lem:firstkindgeodesics} for any $m$ large enough, we have that in $\widetilde{X_m}$, the unique component of the pre-image of $X_m$ containing $\widetilde{\tau}$, the intersections $L\cap \widetilde{X_m}$ and $M\cap \widetilde{X_m}$ are \emph{not} homotopic in $\widetilde{X_m}$ through homotopies preserving $\partial_0 \widetilde{X_m}$ and similarly for $L'$ and $M'$. 
See Figure \ref{fig:transversalextended}.

We claim that if $m$ is large enough to satisfy this condition, then all arcs in $\ell_i^m$ for any $i=1,\ldots,r(m)$ intersect $\tau$ the same number of times. To see this, we consider $\Lambda_m'$, the sub-lamination of $\Lambda_m$ consisting of all the arcs in all of the homotopy classes $\ell_i^m$. We again consider the pre-image $\widetilde{\Lambda_m'}$ in $\widetilde{X_m}$, the pre-image $\widetilde{\ell_i^m}$ in $\widetilde{X_m}$ for each $i$, and our fixed lift $\widetilde{\tau}$. Any pre-image $\widetilde{\ell_i^m}$ is partitioned into families of arcs in $\widetilde{X_m}$ which are homotopic through homotopies preserving $\partial_0 \widetilde{X_m}$. Moreover, if $c$ and $d$ are components of $\partial_0 \widetilde{X_m} \cup \partial_\infty \widetilde{X_m}$, then the arcs of $\widetilde{\Lambda_m'}$ joining $c$ to $d$ either \emph{all} intersect $\widetilde{\tau}$ or \emph{all} miss $\widetilde{\tau}$ by our choice of $m$. See Figure \ref{fig:intersectonce}.

\begin{figure}[h]
\centering

\begin{tabular}{c c}
\begin{subfigure}[t]{0.5\textwidth}
\centering
\def\svgwidth{0.85\textwidth}
\begingroup%
  \makeatletter%
  \providecommand\color[2][]{%
    \errmessage{(Inkscape) Color is used for the text in Inkscape, but the package 'color.sty' is not loaded}%
    \renewcommand\color[2][]{}%
  }%
  \providecommand\transparent[1]{%
    \errmessage{(Inkscape) Transparency is used (non-zero) for the text in Inkscape, but the package 'transparent.sty' is not loaded}%
    \renewcommand\transparent[1]{}%
  }%
  \providecommand\rotatebox[2]{#2}%
  \newcommand*\fsize{\dimexpr\f@size pt\relax}%
  \newcommand*\lineheight[1]{\fontsize{\fsize}{#1\fsize}\selectfont}%
  \ifx\svgwidth\undefined%
    \setlength{\unitlength}{1248.56020469bp}%
    \ifx\svgscale\undefined%
      \relax%
    \else%
      \setlength{\unitlength}{\unitlength * \real{\svgscale}}%
    \fi%
  \else%
    \setlength{\unitlength}{\svgwidth}%
  \fi%
  \global\let\svgwidth\undefined%
  \global\let\svgscale\undefined%
  \makeatother%
  \begin{picture}(1,1)%
    \lineheight{1}%
    \setlength\tabcolsep{0pt}%
    \put(0,0){\includegraphics[width=\unitlength,page=1]{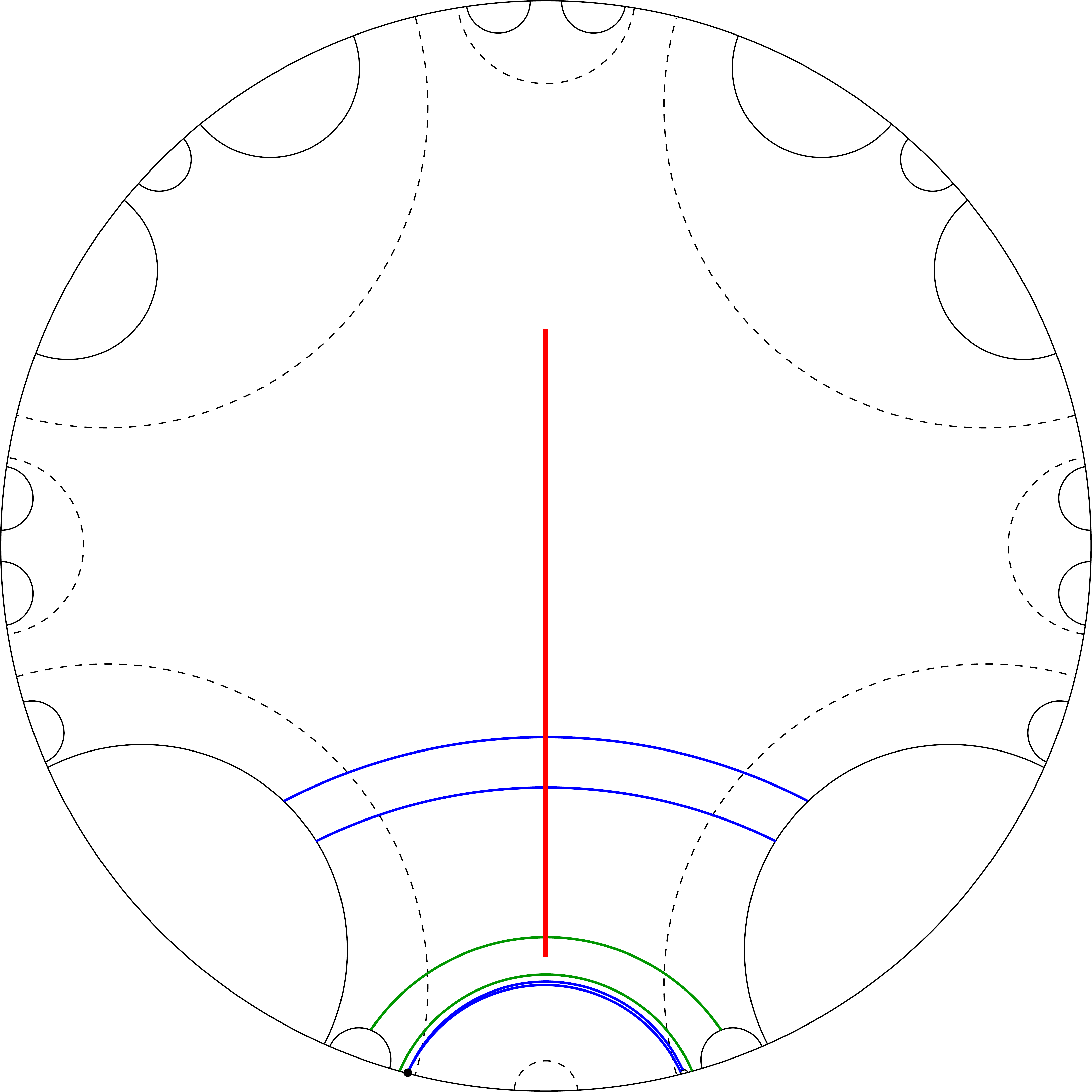}}%
    \put(0.26995633,0.35678929){\color[rgb]{0,0,0}\makebox(0,0)[lt]{\lineheight{1.25}\smash{\begin{tabular}[t]{l}$a_1$\end{tabular}}}}%
    \put(0.68468985,0.35678929){\color[rgb]{0,0,0}\makebox(0,0)[lt]{\lineheight{1.25}\smash{\begin{tabular}[t]{l}$b_1$\end{tabular}}}}%
    \put(0.3182678,0.1987959){\color[rgb]{0,0,0}\makebox(0,0)[lt]{\lineheight{1.25}\smash{\begin{tabular}[t]{l}$c$\end{tabular}}}}%
    \put(0.65225249,0.1987959){\color[rgb]{0,0,0}\makebox(0,0)[lt]{\lineheight{1.25}\smash{\begin{tabular}[t]{l}$d$\end{tabular}}}}%
    \put(0.42972454,0.14347154){\color[rgb]{0.66666667,0.53333333,0}\makebox(0,0)[lt]{\lineheight{1.25}\smash{\begin{tabular}[t]{l}$L$\end{tabular}}}}%
    \put(0.54635576,0.10301903){\color[rgb]{0.66666667,0.53333333,0}\makebox(0,0)[lt]{\lineheight{1.25}\smash{\begin{tabular}[t]{l}$M$\end{tabular}}}}%
    \put(0,0){\includegraphics[width=\unitlength,page=2]{intersect_once.pdf}}%
  \end{picture}%
\endgroup%

\caption{Consider $c$ and $d$ components of $\partial_0\widetilde{X_m} \cup \partial_\infty \widetilde{X_m}$. If the arcs from $c$ to $d$ intersect $\widetilde{\tau}$ and intersect $\partial \widetilde{X_n}$ in arcs from $a_1$ to $b_1$ then they all do so exactly once since $L$ separates them from $M$. Here dotted black lines indicate $\partial_0 \widetilde{X_n}$, solid black lines indicate $\partial_0 \widetilde{X_m}$, and $\widetilde{\tau}$ is indicated by a solid red line. There is a second (small) strip of geodesics which do not intersect $\widetilde{\tau}$ at all since they are separated from $L$ by $M$.}
\label{fig:intersectonce}
\end{subfigure} &

\begin{subfigure}[t]{0.5\textwidth}
\centering
\def\svgwidth{0.85\textwidth}
\begingroup%
  \makeatletter%
  \providecommand\color[2][]{%
    \errmessage{(Inkscape) Color is used for the text in Inkscape, but the package 'color.sty' is not loaded}%
    \renewcommand\color[2][]{}%
  }%
  \providecommand\transparent[1]{%
    \errmessage{(Inkscape) Transparency is used (non-zero) for the text in Inkscape, but the package 'transparent.sty' is not loaded}%
    \renewcommand\transparent[1]{}%
  }%
  \providecommand\rotatebox[2]{#2}%
  \newcommand*\fsize{\dimexpr\f@size pt\relax}%
  \newcommand*\lineheight[1]{\fontsize{\fsize}{#1\fsize}\selectfont}%
  \ifx\svgwidth\undefined%
    \setlength{\unitlength}{1248.56020469bp}%
    \ifx\svgscale\undefined%
      \relax%
    \else%
      \setlength{\unitlength}{\unitlength * \real{\svgscale}}%
    \fi%
  \else%
    \setlength{\unitlength}{\svgwidth}%
  \fi%
  \global\let\svgwidth\undefined%
  \global\let\svgscale\undefined%
  \makeatother%
  \begin{picture}(1,1)%
    \lineheight{1}%
    \setlength\tabcolsep{0pt}%
    \put(0,0){\includegraphics[width=\unitlength,page=1]{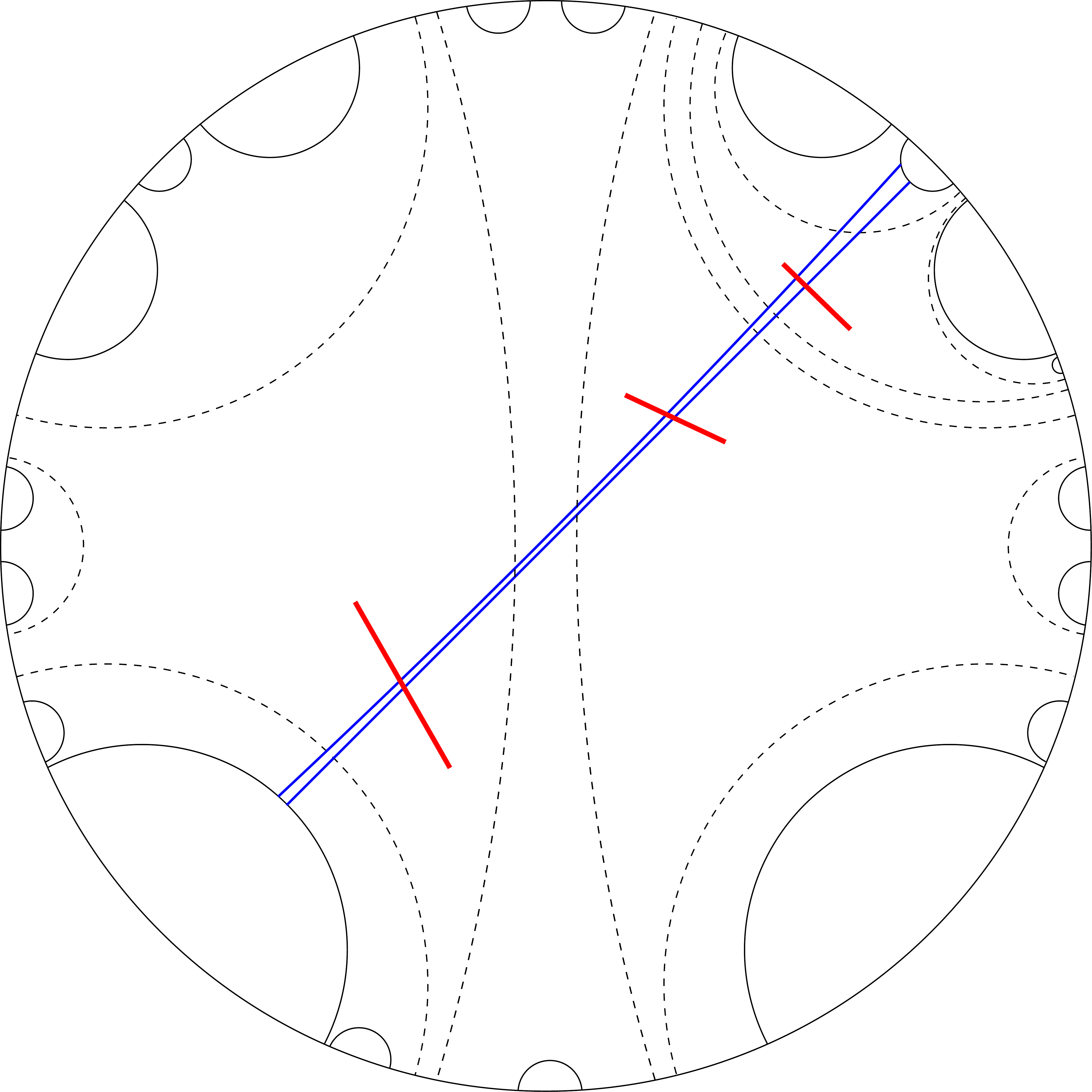}}%
    \put(0.30034595,0.22338228){\color[rgb]{0,0,0}\makebox(0,0)[lt]{\lineheight{1.25}\smash{\begin{tabular}[t]{l}$a$\end{tabular}}}}%
    \put(0.79028528,0.85129303){\color[rgb]{0,0,0}\makebox(0,0)[lt]{\lineheight{1.25}\smash{\begin{tabular}[t]{l}$b$\end{tabular}}}}%
    \put(0,0){\includegraphics[width=\unitlength,page=2]{transversal_translates.pdf}}%
  \end{picture}%
\endgroup%

\caption{Consider $a$ and $b$ which are either boundary components in $\partial_0\widetilde{X_m}$ or ends in $\partial_\infty \widetilde{X_m}$, and the arcs from $a$ to $b$. They may intersect multiple translates of $\widetilde{X_n}$ and for each translate of $\widetilde{X_n}$ they intersect a translate of $\widetilde{\tau}$ at most once. Here translates of $\widetilde{\tau}$ are indicated by solid red lines. Translates of $\widetilde{X_n}$ are shaded in green.}
\label{fig:transversaltranslates}
\end{subfigure}

\end{tabular}
\caption{Intersections of arcs of $\widetilde{\Lambda_m'}$ with lifts of the transversal $\tau$.}
\end{figure}

Thus, there are finite multi-sets $\mathcal A'=\{a'_1,\ldots,a'_s\}$ and $\mathcal B'=\{b'_1,\ldots,b'_s\}$, each consisting of boundary components of $\partial_0\widetilde{X_m}$ and/or ends of $\partial_\infty \widetilde{X_m}$, and such that the arcs of $\widetilde{\Lambda_m'}$ intersecting $\widetilde{\tau}$ are exactly those joining $a_i'$ to $b_i'$ for some $i$. Moreover, the arcs of $\widetilde{\Lambda_m'}$ joining $a_i'$ to $b_i'$ all intersect $\widetilde{\tau}$ exactly once. For each $i=1,\ldots,s$ there is a sub-interval of $\widetilde{\tau}$, call it $J_i$, with endpoints in $\widetilde{\Lambda_m'}$, containing all the intersections with the arcs from $a'_i$ to $b'_i$ and no intersections with arcs from $a'_j$ to $b'_j$ for $j\neq i$. Consider the arcs in a homotopic family in $\widetilde{\Lambda_m'}$ joining a boundary component or end $a$ to a boundary component or end $b$. Whenever the family intersects a lift of $\tau$, the lift has the form $g\widetilde{\tau}$ for some $g\in \pi_1(X_m)$. Translating by $g^{-1}$, we see that all of the arcs from $a$ to $b$ intersect the lift $g\widetilde{\tau}$ exactly once, in the interval $gJ_i$ for some $i$. See Figure \ref{fig:transversaltranslates}.

Finally, we see from this that every arc in a class $\ell_i^m$ intersects $\tau$ a number of time equal to the number of lifts of $\tau$ that $\ell_i^m$ intersects when we lift it to a family of arcs homotopic through homotopies preserving $\partial_0\widetilde{X_m}$. This completes the proof in the special case that every arc in every class $\ell_i^n$ intersects $\tau$ at most once. For the case of a general transversal $\tau$, split $\tau$ into transversals $\tau_1, \tau_2, \ldots, \tau_t$ such that each $\tau_i$ intersects every arc in every class $\ell_i^n$ at most once. Apply the previous arguments to the arcs $\tau_i$ separately to find numbers $m_i\geq n$ as in the statement of the lemma for each $\tau_i$. Taking $m=\max\{m_1,\ldots,m_t\}$ completes the proof.
\end{proof}

\begin{rem}
\label{rem:sameintmeas}
Suppose that $\tau$ is a transversal and $n$ is large enough that every arc in each equivalence class $\ell_i^n$ intersects $\tau$ the same number of times, for $1\leq i\leq r(n)$. Let $E_i$ be the number of times that $\tau$ intersects any arc in $\ell_i^n$. Recall that $\tau_i^n$ denotes a transversal intersecting $\Lambda_n$ only in $\ell_i^n$ and intersecting each arc of $\ell_i^n$ exactly once. We may partition $\tau$ into (1) some arcs which intersect $\Lambda_n$ only in $\ell_i^n$, for one value of $1\leq i\leq r(n)$, and intersect each arc in $\ell_i^n$ at most once, plus (2) some arcs which are disjoint from $\ell_1^n\cup \ldots \cup \ell_{r(n)}^n$. The arcs of type (1) can be homotoped into $\tau_i^n$ and then their union covers the points of $\Lambda \cap \tau_i^n$ uniformly $E_i$ times. The arcs of type (2) intersect $\Lambda_n$ only in the minimal sub-laminations $\Gamma_i^n$ for $1\leq i\leq s(n)$ and leaves spiraling onto them. Thus, for $\mu$ any transverse measure, \[\mu(\tau)=\sum_{i=1}^{r(n)} E_i \mu(\tau_i^n) + \sum_{i=1}^{s(n)} (\mu|\Gamma_i^n)(\tau).\]
\end{rem}

Our second preliminary result identifies $\mathcal M(\Lambda)$ with the inverse limit of the $\mathcal M(\Lambda_n)$'s. Denote by $\rho_{n\infty}$ the linear map $\mathcal M(\Lambda)\to \mathcal M(\Lambda_n)$ which restricts transverse measures to $\Lambda_n$: $\rho_{n\infty}(\mu)=\mu|\Lambda_n$.

\begin{lem}
The restriction maps $\rho_{n\infty}$ identify $\mathcal M(\Lambda)$ linearly homeomorphically with the limit of the inverse system $\mathcal M(\Lambda_1) \xleftarrow{\rho_1} \mathcal M(\Lambda_2) \xleftarrow{\rho_2} \ldots$.
\end{lem}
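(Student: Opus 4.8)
The plan is to prove that the assembled map
\[
R\defeq(\rho_{1\infty},\rho_{2\infty},\ldots)\colon \mathcal M(\Lambda)\longrightarrow \varprojlim_n \mathcal M(\Lambda_n)
\]
is a linear homeomorphism, where the inverse limit is the sub-cone of $\prod_n\mathcal M(\Lambda_n)$ of compatible sequences $(\mu_n)$ with $\rho_n(\mu_{n+1})=\mu_n$. Since restriction is transitive we have $\rho_n\circ\rho_{(n+1)\infty}=\rho_{n\infty}$, so $R(\mu)=(\mu|\Lambda_n)_n$ genuinely lands in the inverse limit, and $R$ is linear because each $\rho_{n\infty}$ is. The one geometric input I would isolate at the outset is that every transversal $\tau$ to $\Lambda$ is compact, hence contained in $X_n$ for all large $n$; for such $n$ we have $\tau\cap\Lambda=\tau\cap\Lambda_n$, so $\tau$ is simultaneously a transversal to $\Lambda_n$ and $\mu_\tau=(\mu|\Lambda_n)_\tau$. (Here I use the convention that transversals to $\Lambda_n$ are arcs in $X_n$, so that restriction to a sub-lamination is unambiguous.)

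Injectivity is then immediate: if $\mu|\Lambda_n=\mu'|\Lambda_n$ for all $n$ and $\tau$ is an arbitrary transversal, choose $n$ with $\tau\subset X_n$ to get $\mu_\tau=(\mu|\Lambda_n)_\tau=(\mu'|\Lambda_n)_\tau=\mu'_\tau$, whence $\mu=\mu'$. For surjectivity I would start from a compatible sequence $(\mu_n)$ and \emph{define} a candidate $\mu$ by $\mu_\tau\defeq(\mu_n)_\tau$ for any $n$ with $\tau\subset X_n$. Independence of $n$ follows from compatibility and the boxed observation: for $\tau\subset X_n\subset X_m$ one has $(\mu_m)_\tau=(\mu_m|\Lambda_n)_\tau=(\mu_n)_\tau$, the first equality because $(\mu_m)_\tau$ is already supported on $\tau\cap\Lambda_n$. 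It then remains to verify the two axioms of a transverse measure. Both reduce to a single surface: a sub-transversal $\tau'\subset\tau$ lies in the same $X_n$ as $\tau$, and the image of a homotopy $F$ is compact, hence contained in some $X_n$ in whose interior $F$ respects $\Lambda_n$; the required identities then follow because $\mu_n$ is already a transverse measure to $\Lambda_n$. Finally $R(\mu)=(\mu_n)$ holds because $(\mu|\Lambda_n)_\tau=(\mu_n)_\tau$ for every transversal $\tau\subset X_n$ by construction.

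Continuity of $R$ and of $R^{-1}$ I expect to follow formally from the definitions of the weak${}^*$ topologies. For $R$ it suffices, the target carrying the inverse-limit (subspace of product) topology, that each coordinate $\mu\mapsto\mu|\Lambda_n$ be continuous, and since $\mathcal M(\Lambda_n)$ carries its weak${}^*$ topology, that $\mu\mapsto(\mu|\Lambda_n)_\tau$ be continuous for every transversal $\tau\subset X_n$; but this map equals $\mu\mapsto\mu_\tau$, continuous by definition of the topology on $\mathcal M(\Lambda)$. For $R^{-1}$ it suffices that $(\mu_n)\mapsto\mu_\tau$ be continuous into $\mathcal M(\Lambda\cap\tau)$ for every transversal $\tau$ to $\Lambda$; choosing $n$ with $\tau\subset X_n$, this factors as the coordinate projection to $\mathcal M(\Lambda_n)$ followed by $\nu\mapsto\nu_\tau$, both continuous.

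I expect the only real work to be in surjectivity—specifically, checking that the measure $\mu$ assembled from the $\mu_n$ satisfies homotopy invariance on \emph{all} transversals of $\Lambda$, not merely those adapted to a fixed $X_n$. Everything else is bookkeeping once the compactness observation is in place (every transversal, sub-transversal, and homotopy lives inside a single $X_n$, where $\Lambda$ and $\Lambda_n$ agree). This is exactly the content of ``Fact (1)'' in the proof outline of Theorem \ref{thm:limithomeo}.
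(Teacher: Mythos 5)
Your proposal is correct and follows the same route as the paper: both define the inverse map by setting $\mu_\tau=(\mu_n)_\tau$ for $n$ large enough that the (compact) transversal $\tau$ lies in $X_n$, and check compatibility, the transverse-measure axioms, and continuity coordinate-by-coordinate. You have simply spelled out the details that the paper leaves as ``one may check,'' including the useful observation that every transversal, sub-transversal, and homotopy has compact image and hence lives in a single $X_n$ where $\Lambda$ and $\Lambda_n$ agree.
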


\begin{proof}
Since $\rho_{n-1}\circ\rho_{n\infty}=\rho_{(n-1) \infty}$, there is an induced continuous linear map $\mathcal M(\Lambda)\to \varprojlim \mathcal M(\Lambda_n)$. On the other hand, there is a map $\varprojlim \mathcal M(\Lambda_n) \to \mathcal M(\Lambda)$ defined as follows. If $(\mu_n)_{n=1}^\infty\in \varprojlim \mathcal M(\Lambda_n)$ then its image $\mu$ in $\mathcal M(\Lambda)$ is the transverse measure defined by $\mu_\tau=(\mu_n)_\tau$ for any $n$ large enough that $\tau$ lies in $X_n$. The map $\varprojlim \mathcal M(\Lambda_n) \to \mathcal M(\Lambda)$ is linear and continuous since these properties hold for each map $\mu_n \mapsto (\mu_n)_\tau$. One may check that these functions between $\mathcal M(\Lambda)$ and $\varprojlim \mathcal M(\Lambda_n)$ are mutually inverse.
\end{proof}

Thus, we may define a map $\Psi:\mathcal M(\Lambda)=\varprojlim \mathcal M(\Lambda_n)\to \varprojlim C_n= \mathcal W(\Lambda)$ by $\Psi=(\Psi_1,\Psi_2,\Psi_3,\ldots)$. Our last preliminary result will be used to show that $\Psi$ is proper and surjective.

\begin{lem}
\label{lem:mapsproper}
Each map $\Psi_n:\mathcal M(\Lambda_n)\to C_n$ is proper and surjective.
\end{lem}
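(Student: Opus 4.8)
The plan is to exploit the product structure of $\Psi_n$ recorded just above the statement. By the structure proposition and Example \ref{ex:product} we have
$$\mathcal M(\Lambda_n)=\prod_{i=1}^{r(n)}\mathcal M(A_i^n)\times\prod_{i=1}^{s(n)}\mathcal M(\Gamma_i^n),\qquad C_n=\prod_{i=1}^{r(n)}\R_+\times\prod_{i=1}^{s(n)}\mathcal M(\Gamma_i^n),$$
and $\Psi_n$ is precisely the product of the total-mass maps $T_i\colon\mathcal M(A_i^n)\to\R_+$, $\mu\mapsto\mu(A_i^n)$, on the arc factors with the identity maps on the $\mathcal M(\Gamma_i^n)$ factors. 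So I would first reduce everything to the single model map $T\colon\mathcal M(A)\to\R_+$, $\mu\mapsto\mu(A)$, where $A$ is a nonempty compact totally disconnected metrizable space, and then reassemble the factors. (Nonemptiness of $A_i^n$ is automatic, since $\ell_i^n$ is a homotopy class of arcs actually present in $\Lambda_n$.)

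For surjectivity, $T$ is onto $\R_+$ because for any point $a\in A$ the point mass $t\,\delta_a$ maps to $t$, and the identity maps are trivially surjective; hence the product $\Psi_n$ is surjective.

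For properness, the key observation is that $T^{-1}\big([0,R]\big)=\{\mu\in\mathcal M(A):\mu(A)\le R\}$ is weak${}^*$-compact. I would prove this by noting that the map $[0,R]\times\mathcal P(A)\to\mathcal M(A)$, $(t,\nu)\mapsto t\nu$, is continuous with image exactly this set; since $\mathcal P(A)$ is weak${}^*$-compact by Banach--Alaoglu and $[0,R]$ is compact, the image of the compact product is compact. This is the one substantive step, and it is exactly where Banach--Alaoglu enters (as anticipated in the outline of the proof of Theorem \ref{thm:limithomeo}); everything else is formal.

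Finally I would upgrade factorwise properness to properness of $\Psi_n$ using the standard fact that a finite product of proper maps with Hausdorff targets is proper. Concretely, given a compact $K\subseteq C_n$, each coordinate projection $K_j$ is compact, so $\prod_j(\text{factor map})^{-1}(K_j)$ is compact by the previous step (the identity factors contribute the compact sets $K_j$ themselves). Now $\Psi_n^{-1}(K)$ is closed, being the continuous preimage of the closed set $K$, and it is contained in this compact product; a closed subset of a compact set is compact, so $\Psi_n^{-1}(K)$ is compact. The main obstacle is the compactness of bounded-mass measure sets in step three; the product bookkeeping in steps one and four is routine.
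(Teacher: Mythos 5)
Your proposal is correct and follows essentially the same route as the paper: decompose $\Psi_n$ factorwise, get surjectivity from point masses, and get properness from the Banach--Alaoglu compactness of the bounded-mass measure sets $\{\mu:\mu(A)\le R\}$. Your derivation of that compactness via the continuous surjection $[0,R]\times\mathcal P(A)\to\{\mu:\mu(A)\le R\}$ is just a slightly more explicit version of the step the paper cites directly to Banach--Alaoglu.
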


\begin{proof}
Since $\mathcal M(\Lambda_n)=\prod_{i=1}^{r(n)} \mathcal M(A_i^n) \times \prod_{i=1}^{s(n)} \mathcal M(\Gamma_i^n)$, $C_n=\prod_{i=1}^{r(n)} \R_+ \times \prod_{i=1}^{s(n)} \mathcal M(\Gamma_i^n)$, $\Psi_n$ is defined component-wise, and the maps on the $\mathcal M(\Gamma_i^n)$ are identities, it suffices to show that the maps $\mathcal M(A_i^n)\to \R_+$ defined by taking total mass are proper and surjective. Given any $c\in \R_+$ we may consider $c\delta$ where $\delta$ is a point mass at some point of $A_i^n$. This shows that $\mathcal M(A_i^n)\to \R_+$ is surjective. On the other hand, it is proper, since by the Banach-Alaoglu Theorem the space of measures on $A_i^n$ with total mass bounded by some number $E$ is compact in the weak${}^*$ topology.
\end{proof}

Finally we prove Theorem \ref{thm:limithomeo}.

\begin{proof}[Proof of Theorem \ref{thm:limithomeo}]
The map $\Psi=(\Psi_1,\Psi_2,\ldots)$ is continuous and linear since each $\Psi_i$ is. We now check that $\Psi$ is proper and surjective. If $K\subset \mathcal W(\Lambda)$ is compact and non-empty then its images $K_i$ in $C_i$ are each compact and non-empty. Each $\Psi_i^{-1}(K_i)$ is compact and non-empty by Lemma \ref{lem:mapsproper}. Finally, $\Psi^{-1}(K)$ is equal to the inverse limit of the sets $\Psi_i^{-1}(K_i)$ with the transition maps $\rho_i$ (\cite[Sec. I.4.4 Corollary to Proposition 9]{bourbaki}). An inverse limit of non-empty compact Hausdorff spaces is non-empty and compact (\cite[Section I.9.6, Proposition 8]{bourbaki}). Thus $\Psi^{-1}(K)$ is compact and non-empty so that $\Psi$ is proper and surjective. A proper map between metrizable spaces is closed, so $\Psi$ is closed since $\mathcal W(\Lambda)$ and $\mathcal M(\Lambda)$ are metrizable (as subsets of countable products of metrizable spaces). To complete the proof, it suffices to show that $\Psi$ is injective. Suppose that $\mu,\mu'\in \mathcal M(\Lambda)$ with $\mu\neq \mu'$. Then choosing a transversal for which $\mu_\tau \neq \mu'_\tau$ and possibly passing to a sub-transversal, we may suppose that $\mu(\tau)\neq \mu'(\tau)$. By Lemma \ref{lem:sameintnum}, we may choose a surface $X_n$ large enough that for each $1\leq i\leq r(n)$, each arc in the homotopy class $\ell_i^n$ intersects $\tau$ the same number of times $E_i$. By Remark \ref{rem:sameintmeas}, \[\mu(\tau)=\sum_{i=1}^{r(n)} E_i \mu(\tau_i^n) +\sum_{i=1}^{s(n)} (\mu|\Gamma_i^n)(\tau)\] and similarly for $\mu'$. Consequently we must have $\mu(\tau_i^n)\neq \mu'(\tau_i^n)$ or $\mu|\Gamma_i^n\neq \mu'|\Gamma_i^n$ for some $i$. These are the components of the image of $\mu$ in $C_n$, so $\Psi(\mu)\neq \Psi(\mu')$. This completes the proof.
\end{proof}

\noindent As noted earlier, Theorem \ref{mainthm:inverselim} follows immediately.

\subsection{Effectivizing the linear homeomorphism}

We showed that the map $\Psi:\mathcal M(\Lambda)\to \mathcal W(\Lambda)$ is a linear homeomorphism. However, the inverse $\Psi^{-1}$ remains mysterious from this point of view. Defining $\Psi^{-1}$ would yield a more effective result, in that one could explicitly construct a transverse measure from any element of the inverse limit $\mathcal W(\Lambda)$. We outline how to do this, leaving the details to the interested reader.

Consider an element $(w_n)_{n=1}^\infty \in \varprojlim C_n$. We wish to construct a transverse measure $\mu$ to $\Lambda$ from $(w_n)_n$. To do this, we construct \emph{approximate measures}. Consider a transversal $\tau$ to $\Lambda$. It is contained in $X_n$ for all $n$ sufficiently large. Set $E_i^n$ to be the \emph{maximum} number of times that any arc in the homotopy class $\ell_i^n$ intersects $\tau$ (the number of intersection points may vary by arc). If \[w_n = \sum_{i=1}^{r(n)} b_i^n e_i^n + \sum_{i=1}^{s(n)} \nu_i^n \text{ then we define } w_n(\tau)=\sum_{i=1}^{r(n)} b_i^nE_i^n + \sum_{i=1}^{s(n)} \nu_i^n(\tau)\] which we think of as an approximate measure of $\tau$. We emphasize that this does not define an actual transverse measure to $\Lambda$ but only an approximation. We take limits to find honest measures:

\begin{prop}
Let $\tau$ be a transversal to $\Lambda$ and $(w_n)_{n=1}^\infty \in \mathcal W(\Lambda)$. Then the approximate measures $w_n(\tau)$ are decreasing with $n$ and therefore $\lim_{n\to \infty} w_n(\tau)$ exists.
\end{prop}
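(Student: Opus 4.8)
The plan is to fix $n$ large enough that $\tau\subset X_n$ and to compare the consecutive approximate measures $w_n(\tau)$ and $w_{n+1}(\tau)$ directly, using that $(w_n)_n$ lies in the inverse limit, so $w_n=\pi_n(w_{n+1})$. Writing $w_{n+1}=\sum_{j=1}^{r(n+1)}b_j^{n+1}e_j^{n+1}+\sum_{j=1}^{s(n+1)}\nu_j^{n+1}$ and applying the transition formula for $\pi_n$ from Section \ref{sec:transmaps}, I would read off
\[
b_i^n=\sum_{j=1}^{r(n+1)}a_{ij}b_j^{n+1}+\sum_{j=1}^{s_0}\nu_j^{n+1}(\tau_i^n),
\]
while the minimal sub-laminations of $\Lambda_n$ are exactly the $\Gamma_j^{n+1}$ contained in $X_n$ (those with $j>s_0$), carrying the unchanged measures $\nu_j^{n+1}$. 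Since $\tau\subset X_n\subset X_{n+1}$, the contributions $\sum_{j>s_0}\nu_j^{n+1}(\tau)$ to $w_{n+1}(\tau)$ and $\sum_i\nu_i^n(\tau)$ to $w_n(\tau)$ agree term by term and cancel. Thus $w_n(\tau)\ge w_{n+1}(\tau)$ reduces to
\[
\sum_{i=1}^{r(n)}E_i^n b_i^n\ \ge\ \sum_{j=1}^{r(n+1)}E_j^{n+1}b_j^{n+1}+\sum_{j=1}^{s_0}\nu_j^{n+1}(\tau).
\]
Substituting the expression for $b_i^n$ and collecting the coefficient of each $b_j^{n+1}$ and each $\nu_j^{n+1}$, it suffices to prove the two estimates below.

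First I would establish the \emph{arc estimate} $E_j^{n+1}\le\sum_{i=1}^{r(n)}a_{ij}E_i^n$ for every $1\le j\le r(n+1)$. For any arc $L$ in the class $\ell_j^{n+1}$, every intersection point of $L$ with $\tau$ lies on a component of $L\cap X_n$, because $\tau\subset X_n$. Each such component is a geodesic sub-arc of a leaf of $\Lambda$, hence a type (1) leaf of $\Lambda_n$; as geodesic arcs admit no bigons or boundary-parallel pieces in a subsurface with geodesic boundary, each component is essential and therefore homotopic to exactly one class $\ell_i^n$, with precisely $a_{ij}$ components homotopic to $\ell_i^n$ by definition of $a_{ij}$. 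Since each such component meets $\tau$ at most $E_i^n$ times, summing over components gives $|L\cap\tau|\le\sum_i a_{ij}E_i^n$; taking the supremum over $L$ yields the estimate (and this suffices since $b_j^{n+1}\ge 0$).

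The main point is the \emph{measure estimate} $\nu_j^{n+1}(\tau)\le\sum_{i=1}^{r(n)}E_i^n\,\nu_j^{n+1}(\tau_i^n)$ for $1\le j\le s_0$, which I would prove using a multiplicity–integral refinement of Remark \ref{rem:sameintmeas}. Write $\Gamma=\Gamma_j^{n+1}$ and $\mu=\nu_j^{n+1}$. As above, $\Gamma\cap X_n$ is a union of essential geodesic arcs, those in class $\ell_i^n$ forming a sub-family $B_i\times I_i^n\subset A_i^n\times I_i^n$ with $B_i\subset A_i^n$ compact; the transversal $\tau_i^n$ crosses each such arc once and is disjoint from the other classes, so $\mu(\tau_i^n)=\mu(B_i)$ is the total mass of the induced measure on $B_i$. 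Partitioning $\tau$ into sub-transversals, each crossing the arcs of a single class at most once (plus pieces disjoint from all arc classes, which are disjoint from $\Gamma$ and hence $\mu$-null), homotopy invariance identifies the class-$i$ contribution with $\int_{B_i}m_i(a)\,d\mu(a)$, where $m_i(a)$ is the number of intersections of $\tau$ with the arc $\{a\}\times I_i^n$. Since $m_i(a)\le E_i^n$ for every $a$, this contribution is at most $E_i^n\mu(B_i)=E_i^n\nu_j^{n+1}(\tau_i^n)$; summing over $i$ gives the estimate. The hard part is precisely setting up this multiplicity integral carefully: the number of intersection points varies from arc to arc within a class, so one must charge each intersection to the correct point of $B_i$ via homotopy invariance, rather than merely counting sub-transversals (which would overcount when $\tau$ makes several disjoint passes through the same strip).

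Combining the two estimates shows $w_n(\tau)\ge w_{n+1}(\tau)$ for all large $n$, so the sequence $w_n(\tau)$ is non-increasing. As each $w_n(\tau)=\sum_i b_i^nE_i^n+\sum_i\nu_i^n(\tau)$ is a sum of non-negative terms, it is bounded below by $0$, and therefore $\lim_{n\to\infty}w_n(\tau)$ exists, completing the proof.
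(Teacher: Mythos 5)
The paper states this proposition without proof (the details of this subsection are explicitly left to the interested reader), so there is no argument of the authors to compare against; your proof is correct and is built entirely from the machinery the paper sets up for this purpose. Reducing $w_n(\tau)\ge w_{n+1}(\tau)$ via the explicit formula for $\pi_n$ to the two estimates $E_j^{n+1}\le\sum_i a_{ij}E_i^n$ (counting components of $L\cap X_n$, all of which are essential geodesic arcs in some class $\ell_i^n$) and $\nu_j^{n+1}(\tau)\le\sum_i E_i^n\,\nu_j^{n+1}(\tau_i^n)$ (the multiplicity-integral refinement of Remark \ref{rem:sameintmeas}, where the intersection count $m_i(a)\le E_i^n$ is allowed to vary with the arc) is surely the intended argument, and your care in charging each intersection of $\tau$ to the correct point of $A_i^n$ via homotopy invariance is exactly the point that needs attention.
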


One now defines a pre-measure $\mu_\tau$ by setting $\mu(\sigma)=\lim_{n\to \infty} w_n(\sigma)$ for any sub-transversal $\sigma$ of $\tau$ and extending over disjoint unions of such sub-transversals. An application of the Carath\'{e}odory Extension Theorem yields an honest measure $\mu_\tau$ on $\tau$.

\begin{prop}
Let $(w_n)_{n=1}^\infty\in \mathcal W(\Lambda)$ and define the limits $\mu_\tau$ as above for any transversal $\tau$ to $\Lambda$. Then the Borel measures $\mu_\tau$ define a transverse measure to $\Lambda$. Moreover, setting $\Psi^{-1}((w_n)_{n=1}^\infty)=\mu$ defines the inverse homeomorphism to the homeomorphism $\Psi:\mathcal M(\Lambda)\to \mathcal W(\Lambda)$.
\end{prop}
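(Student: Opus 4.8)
The plan is to leverage the fact, already proved in Theorem \ref{thm:limithomeo}, that $\Psi\colon\mathcal M(\Lambda)\to\mathcal W(\Lambda)$ is a bijective linear homeomorphism. Writing $T$ for the reconstruction map $(w_n)_n\mapsto\mu$ built from the approximate measures, it suffices to show that $T$ is a \emph{one-sided} inverse of $\Psi$; bijectivity of $\Psi$ then promotes this to a two-sided inverse. The entire argument therefore reduces to one computation: that for a given $(w_n)_n$ the approximate measures $w_m(\sigma)$ stabilize, for $m$ large, to the value that the genuine preimage of $(w_n)_n$ under $\Psi$ assigns to $\sigma$.

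Concretely, I would first invoke surjectivity of $\Psi$ (Theorem \ref{thm:limithomeo}) to choose a genuine transverse measure $\nu\in\mathcal M(\Lambda)$ with $\Psi(\nu)=(w_n)_n$. Fix a transversal $\tau$ and an arbitrary subtransversal $\sigma\subseteq\tau$. By Lemma \ref{lem:sameintnum} applied to $\sigma$, there is $m_0$ such that for all $m\geq m_0$ every arc in each class $\ell_i^m$ meets $\sigma$ the \emph{same} number $E_i$ of times. For such $m$ the maximum intersection number $E_i^m$ entering the definition of $w_m(\sigma)$ equals this common value $E_i$, while the coordinates of $w_m=\Psi_m(\nu)$ are exactly $b_i^m=\nu(\tau_i^m)$ and $\nu_i^m=\nu|\Gamma_i^m$. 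Substituting these into the defining expression for $w_m(\sigma)$ and comparing with the formula of Remark \ref{rem:sameintmeas} yields
\[
w_m(\sigma)=\sum_{i=1}^{r(m)} E_i\,\nu(\tau_i^m)+\sum_{i=1}^{s(m)}(\nu|\Gamma_i^m)(\sigma)=\nu(\sigma)
\]
for every $m\geq m_0$. Thus $(w_m(\sigma))_m$ is eventually constant equal to $\nu(\sigma)$, and in particular $\mu(\sigma)=\lim_{m\to\infty}w_m(\sigma)=\nu(\sigma)$; the existence of the limit is the content of the preceding proposition, but here we obtain the sharper statement of eventual constancy.

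This single identity does all the work. Since the reconstructed premeasure $\mu$ agrees with $\nu$ on every subtransversal of $\tau$, it agrees with $\nu$ on the algebra of finite disjoint unions of such subtransversals, so by uniqueness in the Carath\'eodory Extension Theorem the Borel measure $\mu_\tau$ obtained by extension coincides with $\nu_\tau$. As this holds for all $\tau$, the family $\{\mu_\tau\}$ is literally the transverse measure $\nu$; in particular it satisfies the restriction and homotopy-invariance axioms because $\nu$ does, which establishes the first assertion. It also shows $T((w_n)_n)=\nu$, whence $\Psi(T((w_n)_n))=\Psi(\nu)=(w_n)_n$, i.e.\ $\Psi\circ T=\mathrm{id}_{\mathcal W(\Lambda)}$. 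Composing on the left with $\Psi^{-1}$ gives $T=\Psi^{-1}$, and its continuity is already part of Theorem \ref{thm:limithomeo}.

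The main obstacle is the comparison hidden in the displayed line: the approximate measure $w_m(\sigma)$ is built from the \emph{maximum} intersection number $E_i^m$, which in general overcounts, whereas the honest formula of Remark \ref{rem:sameintmeas} uses a genuine intersection number. It is exactly Lemma \ref{lem:sameintnum} that reconciles the two, by guaranteeing that for $m$ large all arcs in a class meet $\sigma$ the same number of times, so that maximum equals common value and the two expressions become identical. Everything else—the reduction to a one-sided inverse via bijectivity, and the passage from agreement on subtransversals to agreement of Borel measures via Carath\'eodory—is routine, so I would concentrate the care on confirming that this large-$m$ matching indeed applies to each fixed $\sigma$ (which it does, since we only need the limiting, large-$m$ behavior for each $\sigma$ separately).
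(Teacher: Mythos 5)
Your argument is correct. Note, however, that the paper does not actually supply a proof of this proposition: the surrounding text explicitly says the details of the effectivization are ``left to the interested reader,'' so there is no official argument to match yours against. The implicit intention of the paper seems to be a direct, self-contained verification --- check that $\sigma\mapsto\lim_m w_m(\sigma)$ is a premeasure, extend by Carath\'eodory, verify the restriction and homotopy-invariance axioms, and then check $\Psi(\mu)=(w_n)_n$ --- which is what would make the construction genuinely ``effective.'' You instead take a shortcut through the already-proved Theorem \ref{thm:limithomeo}: surjectivity of $\Psi$ hands you a genuine preimage $\nu$, and the whole proposition collapses to the single identity $w_m(\sigma)=\nu(\sigma)$ for $m$ large. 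That identity is exactly right, and you correctly isolate the one nontrivial point: the approximate measure uses the \emph{maximum} intersection number $E_i^m$, and Lemma \ref{lem:sameintnum} is what guarantees that for large $m$ (depending on $\sigma$) this maximum coincides with the common intersection number appearing in Remark \ref{rem:sameintmeas}, so the two expressions agree term by term. From there the passage through Carath\'eodory uniqueness to $\mu_\tau=\nu_\tau$, and from $\Psi\circ T=\mathrm{id}$ plus bijectivity to $T=\Psi^{-1}$, is airtight. What your route buys is economy; what it gives up is constructivity, since the existence of $\nu$ rests on the compactness/inverse-limit argument for surjectivity of $\Psi$, so your proof certifies the explicit formula rather than re-deriving the measure from scratch. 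As a bonus, your observation that $(w_m(\sigma))_m$ is eventually \emph{constant} (not merely decreasing and convergent) sharpens the preceding proposition in the paper.
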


\section{Bases for cones of measures}
\label{sec:bases}

In Corollary \ref{cor:baseexistence} we showed that $\mathcal M(\Lambda)$ admits a base whenever there is a compact subsurface of $X$ intersecting every leaf of $\Lambda$. This criterion is sufficient but not necessary for the existence of a compact base. Thus, the question of which cones of transverse measures admit bases is not completely straightforward. An example of a cone of transverse measures which has no compact base is the infinite product of rays, $\R_+^{\mathbb{N}}$ (see Example \ref{ex:nobase1}). This example recurs repeatedly. In Section \ref{sec:nocompactbase} we give other examples of cones without bases.

Even when a base does exist, its structure is not transparent from Corollary \ref{cor:baseexistence}. In this section we make the structure more transparent by proving Theorem \ref{mainthm:choquetbase} from the introduction. First consider the case of an inverse system of finite-dimensional simplicial cones \[C_1\xleftarrow{f_1} C_2 \xleftarrow{ f_2 }C_3 \xleftarrow{ f_3} \ldots.\] Here the maps $f_n:C_{n+1}\to C_n$ are linear.
For $n\leq m$ we denote by $f_{nm}:C_m\to C_n$ the composition $f_{nm}\vcentcolon=f_n \circ f_{n+1} \circ \cdots \circ f_{m-1}$.

\begin{lem}
\label{lem:coneinverselim}
Let $\mathcal C$ be the limit of an inverse system \[C_1\xleftarrow{f_1 } C_2 \xleftarrow{ f_2 }C_3 \xleftarrow{ f_3 } \ldots\] of finite-dimensional simplicial cones with linear maps $f_n$. Suppose that the maps $f_n$ satisfy the property that $f_n(v)=0$ only if $v=0$. Let $B_1$ be a base for $C_1$ and define $B_n$ to be the inverse image $f_{1n}^{-1}(B_1)$. Then the inverse limit of the bases $B_n$ is a base for $\mathcal C$ and it is a compact metrizable Choquet simplex.
\end{lem}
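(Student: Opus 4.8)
The plan is to prove the two assertions separately: first verify that each $B_n$ is a genuine base of $C_n$ (indeed a finite–dimensional simplex) and that the $f_n$ restrict to the stated inverse system, then check the base axioms for the limit, and finally extract the simplex property from a levelwise Riesz decomposition argument. For Step~1, recall that $C_n$ is a finite–dimensional simplicial cone, so its extreme rays are $\R_+ e^n_1,\dots,\R_+ e^n_{d_n}$ for a basis $e^n_1,\dots,e^n_{d_n}$ of the linear span of $C_n$. The extreme points of any compact convex base lie on the extreme rays, so by Krein--Milman such a base is the convex hull of one point on each ray; in particular every base of $C_n$ is a $(d_n-1)$-simplex and spans an affine hyperplane $\{P=1\}$ for a unique linear functional $P$ that is strictly positive on $C_n\setminus\{0\}$. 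Applying this to $B_1$ yields $P_1$ with $B_1=C_1\cap P_1^{-1}(1)$. Since each $f_k$ satisfies $f_k(v)=0\Rightarrow v=0$, the composite $f_{1n}$ is injective on $C_n$, so $P_n\vcentcolon=P_1\circ f_{1n}$ is again strictly positive on $C_n\setminus\{0\}$ and $B_n=f_{1n}^{-1}(B_1)=C_n\cap P_n^{-1}(1)$ is a compact base of $C_n$, hence again a finite–dimensional simplex. Finally $f_{1,n+1}=f_{1n}\circ f_n$ gives $f_n(B_{n+1})\subseteq B_n$, so the $f_n$ restrict to affine maps $B_{n+1}\to B_n$ and $\varprojlim B_n$ is the well-defined subset $\{(x_n)\in\mathcal C : x_n\in B_n \text{ for all } n\}$ of $\mathcal C=\varprojlim C_n$.

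For Step~2, this set is convex, closed, and omits $0$, and it is compact metrizable as a closed subset of the countable product $\prod_n B_n$ of compact metric spaces. The one axiom needing the hypotheses is that each ray of $\mathcal C$ meets it exactly once. If $(x_n)\in\mathcal C$ is nonzero then some $x_N\neq 0$, and injectivity at $0$ of every $f_{nm}$ forces $x_n\neq 0$ for all $n$ (for $n\le N$ because $x_n=f_{nN}(x_N)$, and for $n>N$ because $x_N=f_{Nn}(x_n)$). For such a ray, $t(x_n)\in\varprojlim B_n$ holds iff $P_n(tx_n)=1$ for every $n$, i.e. $t=1/P_n(x_n)$ for every $n$; but along a compatible sequence $P_n(x_n)=P_1(f_{1n}(x_n))=P_1(x_1)$ is independent of $n$, so $t=1/P_1(x_1)$ is the unique admissible scalar. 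Hence $\varprojlim B_n$ is a base of $\mathcal C$.

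For Step~3, since $\varprojlim B_n$ is a base, $\mathcal C$ is the cone over it, and a compact metrizable base is a Choquet simplex precisely when its cone has the Riesz decomposition property \cite{phelps, alfsen}. I will verify this property for $\mathcal C$ levelwise. Suppose $x\le y_1+y_2$ in $\mathcal C$. At level $n$ the cone $C_n=\R_+^{d_n}$ is a lattice cone and hence has the Riesz decomposition property, so
\[ D_n\vcentcolon=\{(p,q)\in C_n\times C_n : p+q=x_n,\ 0\le p\le (y_1)_n,\ 0\le q\le (y_2)_n\} \]
is nonempty; it is also compact and convex, being bounded by the $(y_i)_n$ and cut out by closed affine conditions. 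Each $f_n$ is a positive linear map, so $(f_n,f_n)$ carries $D_{n+1}$ into $D_n$, and the $D_n$ form an inverse system of nonempty compact Hausdorff spaces. Its inverse limit is nonempty \cite[Section I.9.6, Proposition 8]{bourbaki}, and any point of it is a pair $x_1,x_2\in\mathcal C$ with $x=x_1+x_2$ and $0\le x_i\le y_i$. Thus $\mathcal C$ has the Riesz decomposition property and $\varprojlim B_n$ is a compact metrizable Choquet simplex.

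The heart of the argument is Step~3. The individual Riesz decompositions at the finite stages are highly non-unique, so they cannot be chosen independently and expected to be compatible with the transition maps; the decisive device is to organize all admissible decompositions at level $n$ into the compact convex set $D_n$, observe that positivity of the $f_n$ makes the $D_n$ an inverse system, and then invoke nonemptiness of inverse limits of compacta to produce a single coherent global decomposition. Everything else reduces to the elementary geometry of finite–dimensional simplicial cones together with the standard equivalence between the simplex property of a base and the Riesz decomposition property of its cone.
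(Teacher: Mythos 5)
Your proof is correct, and Steps 1--2 are essentially the paper's argument: the paper produces the unique scalar $r$ with $rv\in B_n$ directly from the fact that $f_{1n}(v)\neq 0$ for $v\in C_n\setminus\{0\}$, while you package the same information into the strictly positive functionals $P_n=P_1\circ f_{1n}$. One small slip there: from $f_k(v)=0\Rightarrow v=0$ you assert that $f_{1n}$ is \emph{injective} on $C_n$; as the paper notes immediately after the lemma, the hypothesis is strictly weaker than injectivity (e.g.\ $(x,y)\mapsto x+y$ on $\R_+^2$ kills no nonzero vector of the cone but is not injective). Fortunately the only consequence you draw --- strict positivity of $P_n$ on $C_n\setminus\{0\}$ --- needs only the hypothesis as stated, so nothing breaks. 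Where you genuinely diverge is Step 3. The paper settles the Choquet property in one line by citing the Davies--Vincent-Smith theorem (\cite[Theorem 13]{tensor_prod}): an inverse limit of Choquet simplices along affine maps is again a Choquet simplex. You instead prove it by hand, translating ``simplex'' into the Riesz decomposition property of the cone and gluing levelwise decompositions by observing that the sets $D_n$ of admissible splittings form an inverse system of nonempty compacta. This is a sound and essentially self-contained alternative; its only external inputs are the equivalence, for a cone with a \emph{compact} base, between the base being a Choquet simplex and the cone having the Riesz decomposition property (compactness is genuinely used to pass from decomposition to the lattice property, so make sure your citation to \cite{phelps} or \cite{alfsen} points at that precise statement rather than at the analogous statement for $A(K)$), together with the fact that $\R_+^{d}$ is a lattice cone. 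Your route buys independence from \cite{tensor_prod} at the cost of a longer argument, and it exploits the finite-dimensional simplicial structure of the $C_n$ (for the levelwise decompositions and the compactness of the order intervals $[0,(y_i)_n]$), whereas Davies--Vincent-Smith applies to inverse limits of arbitrary Choquet simplices.
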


\noindent The condition $f_n(v)=0$ only if $v=0$ is \emph{not} equivalent to injectivity of $f_n$. Rather, $f_n$ may be extended to a linear map on some $\R^m$ and the condition says that the kernel of the extension intersects $C_n$ only at 0. To prove this lemma we use the following important theorem of Davies-Vincent-Smith:

\begin{thm}[{\cite[Theorem 13]{tensor_prod}}]
\label{thm:choquetinverselim}
Consider an inverse system $B_1\xleftarrow{f_1 } B_2 \xleftarrow{ f_2 } B_3 \xleftarrow{ f_3} \ldots$ of Choquet simplices with affine maps $f_n$. Then the limit $\mathcal B$ of this inverse system is a Choquet simplex. 
\end{thm}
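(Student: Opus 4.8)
The plan is to dualize the entire problem into the language of ordered vector spaces, where inverse limits of compact convex sets become direct limits and the simplex property becomes an algebraic \emph{interpolation} property that is manifestly preserved. Recall that to a compact convex set $B$ one associates the Archimedean order unit space $A(B)$ of continuous affine functions on $B$, ordered pointwise with order unit the constant function $\mathbf 1$; its state space (the positive, unital, continuous functionals), with the evaluation map, is canonically $B$ itself. The classical input I would use is the characterization, valid for any Archimedean order unit space $V$, that $V$ has the Riesz interpolation property (whenever $a_1,a_2\le b_1,b_2$ there is $c$ with $a_i\le c\le b_j$ for all $i,j$) if and only if its state space $S(V)$ is a Choquet simplex (see \cite{alfsen, phelps}; in this generality it is the real-coefficient version of the interpolation $\Leftrightarrow$ simplex dictionary familiar from the theory of dimension groups, cf.\ \cite{goodearl}). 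Thus it suffices to produce an order unit space with interpolation whose state space is $\mathcal B$.

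The affine maps $f_n\colon B_{n+1}\to B_n$ induce unital positive maps $f_n^*\colon A(B_n)\to A(B_{n+1})$, $g\mapsto g\circ f_n$, and I would form the algebraic direct limit $V=\varinjlim\big(A(B_n),f_n^*\big)$, with order unit $\mathbf 1$ and with positive cone declared to be the set of classes having a representative that is genuinely positive at some finite stage (so $[g]\ge 0$ for $g\in A(B_n)$ iff $g\circ f_{nm}\ge 0$ on $B_m$ for some $m\ge n$). Two facts then finish the proof. First, $S(V)=\varprojlim S(A(B_n))=\varprojlim(B_n,f_n)=\mathcal B$: a unital positive functional on $V$ is exactly a compatible family of states $\phi_n$ on the $A(B_n)$, i.e.\ a compatible family of points $b_n\in B_n$ with $f_n(b_{n+1})=b_n$. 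Second, $V$ has the interpolation property, and this is the heart of the argument. Crucially it is \emph{exact}: given $a_1,a_2\le b_1,b_2$ in $V$, I would represent all four at a common stage $n$ and use that, by the definition of the direct-limit order, each of the four inequalities already holds at some finite stage; passing to a large enough $m$ (pushing all four forward via positivity of the $f_n^*$) they hold simultaneously in $A(B_m)$, where an interpolant exists because $B_m$ is a simplex. That interpolant descends to the required $c\in V$.

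The one point to handle with care is that $V$ need not be Archimedean, since the witnessing stage for an inequality $v\le\varepsilon\mathbf 1$ may drift with $\varepsilon$. I would cure this by passing to the Archimedeanization $V^{\mathrm{Arch}}$ (quotient by the infinitesimal elements, then close up the cone), which inherits interpolation and has the same states as $V$, because every state annihilates infinitesimals; see \cite{goodearl}. Applying the interpolation $\Leftrightarrow$ simplex characterization to $V^{\mathrm{Arch}}$ yields that $\mathcal B=S(V^{\mathrm{Arch}})$ is a Choquet simplex, as desired. I would also record the routine verifications that the direct-limit cone equals the union of the images of the cones $A(B_n)_+$ and that $S(A(B_n))=B_n$ with bonding maps $f_n$, so that $S(V)=\mathcal B$ is literally an equality of inverse systems.

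I expect the main obstacle to be organizational rather than deep: the genuinely essential step, interpolation in $V$, is clean precisely because the direct-limit order encodes each comparison as an inequality holding \emph{exactly} at a finite stage, which is why this dual/algebraic route avoids the pitfalls of working on $\mathcal B$ directly. By contrast, any attempt to prove interpolation for $A(\mathcal B)$ in situ — approximating the given functions by ones factoring through a finite $B_N$ — only produces inequalities up to $\varepsilon\mathbf 1$, and such approximate interpolants cannot in general be upgraded to exact ones, since they are not norm-controlled; sidestepping this is the whole point of the approach. The remaining care lies in the Archimedeanization bookkeeping and in invoking the interpolation $\Leftrightarrow$ simplex dictionary in exactly the (possibly non-complete) generality needed.
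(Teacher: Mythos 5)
First, a point of comparison that is somewhat moot: the paper does not prove this statement at all --- it quotes it from Davies--Vincent--Smith \cite[Theorem 13]{tensor_prod} and uses it as a black box --- so what you have written is a reconstruction of a literature result rather than an alternative to an argument in the paper. Your route (dualize to the direct limit $V=\varinjlim\bigl(A(B_n),f_n^*\bigr)$ of affine function spaces, note that the direct-limit order makes each inequality hold \emph{exactly} at a finite stage, interpolate there, and recover $\mathcal B$ as the state space of $V$) is the standard ordered-vector-space argument familiar from dimension group theory, and its three pillars are all correct: $S(A(B_n))\cong B_n$ with matching topologies; states of a direct limit form the inverse limit of the state spaces; and each $A(B_m)$ has exact Riesz interpolation because $B_m$ is a simplex and $A(B_m)$ is norm-complete (this is Edwards' separation theorem, applied to $a_1\vee a_2\le b_1\wedge b_2$).

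There is, however, one genuine gap as written: the Archimedeanization step. You assert that $V^{\mathrm{Arch}}$ ``inherits interpolation,'' citing \cite{goodearl}, but no such result is available there, and the assertion is precisely where the $\epsilon$-drift problem you yourself identified for $A(\mathcal B)$ resurfaces. In $V^{\mathrm{Arch}}$ the cone is enlarged, so $\bar{a}\le\bar{b}$ means $a\le b+\epsilon\mathbf 1$ in $V$ for every $\epsilon>0$; given $\bar{a}_i\le\bar{b}_j$, interpolation in $V$ produces for each $\epsilon$ an element $c_\epsilon$ with $a_i\le c_\epsilon\le b_j+\epsilon\mathbf 1$, but $c_\epsilon$ depends on $\epsilon$ and, since $V$ is not norm-complete, there is no way to extract a single interpolant. (Relatedly, your blanket ``interpolation $\Leftrightarrow$ simplex state space'' dictionary for \emph{all} Archimedean order unit spaces is false in the ``simplex $\Rightarrow$ interpolation'' direction for incomplete spaces --- e.g.\ the polynomials on $[0,1]$ --- and is rescued at the finite stages only because $A(B_m)$ is complete.) Fortunately the gap is removable within your own framework, because the Archimedeanization is unnecessary: the implication you actually need --- that the state space of an ordered vector space with order unit satisfying Riesz interpolation is a Choquet simplex --- holds with \emph{no} Archimedean hypothesis. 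This is Theorem 10.17 of \cite{goodearl}; concretely, by the Riesz--Kantorovich argument interpolation makes the dual cone of $V$ a lattice cone, and $S(V)$ is a weak${}^*$-compact base of that cone, hence a Choquet simplex by Choquet's lattice-cone characterization. Applying this to $V$ itself, with $S(V)=\mathcal B$ as you established, finishes the proof; the paragraph about infinitesimals can simply be deleted.
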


\begin{proof}[Proof of Lemma \ref{lem:coneinverselim}]
As in the statement, choose a base $B_1$ for $C_1$ and define $B_n$ to be the inverse image $f_{1n}^{-1}(B_1)$ in $C_n$. One may check that $B_n$ is convex using that $B_1$ is convex. For $v\in C_n\setminus \{0\}$, there is a unique $r>0$ with $rf_{1n}(v)\in B_1$ and therefore $r>0$ is the unique number with $rv \in B_n$; i.e. $B_n$ is a base.

We obtain by restriction an inverse system \[B_1\xleftarrow{f_1 } B_2 \xleftarrow{ f_2 } B_3 \xleftarrow{ f_3 } \ldots\] of finite-dimensional simplices. Define $\mathcal B$ to be the inverse limit of this system. It is a subspace of $\mathcal C$. We claim that in fact $\mathcal B$ is a base for $\mathcal C$. Consider an element $(v_n)_{n=1}^\infty \in \mathcal C \setminus \{0\}$. We have $v_1\neq 0$. Thus there is a unique $r>0$ with $rv_1\in B_1$. Then for each $n$ we have $rv_n\in f_{1n}^{-1}(B_1)=B_n$. Thus, $r(v_n)_{n=1}^\infty\in \mathcal B$ and $r$ is the unique number with this property. We may verify that $\mathcal B$ is convex by using the convexity of each $B_n$. This proves that $\mathcal B$ is a base, as desired. By Theorem \ref{thm:choquetinverselim}, $\mathcal B$ is a Choquet simplex. As an inverse limit of countably many compact metrizable spaces, $\mathcal B$ is compact and metrizable.
\end{proof}

A problem with applying Lemma \ref{lem:coneinverselim} to our cones of measures is that the transition maps $\pi_n$ do not generally satisfy the condition $\pi_n(w_n)\neq 0$ if $w_n\neq 0$. To utilize Lemma \ref{lem:coneinverselim} it will thus be necessary to modify our inverse system.

\subsection{Modifying exhaustions and inverse systems}

In this section we wish to prove Theorem \ref{mainthm:choquetbase} from the introduction. Consider the hyperbolic surface $X$ endowed with an exhaustion $X_1\subset X_2 \subset \ldots$ as considered earlier and a lamination $\Lambda$. Thus $X_n$ is a punctured compact subsurface with geodesic boundary. As before we consider the laminations $\Lambda_n=\Lambda \cap X_n$. Each $\Lambda_n$ contains finitely many homotopy classes of arcs $\{\ell_i^n\}_{i=1}^{r(n)}$ and finitely many compact minimal sub-laminations $\{\Gamma_i^n\}_{i=1}^{s(n)}$ in the interior of $X_n$. We let $C_n$ be the cone for $\Lambda_n$ defined in Section \ref{sec:inverselim} and $\pi_n:C_{n+1}\to C_n$ the resulting transition maps. Then $\mathcal M(\Lambda)$ is linearly homeomorphic to the inverse limit $\mathcal W(\Lambda)$ of the cones $C_n$ with the maps $\pi_n$.

\begin{rem}
\label{rem:choquetcriterion}
Note that if each homotopy class of arcs $\ell_i^{n+1}$ on $X_{n+1}$ and each compact sub-lamination $\Gamma_i^{n+1}$ on $X_{n+1}$ intersects $X_n$, then $\pi_n$ satisfies the property that $\pi_n(w)=0$ only if $w=0$. If this property is satisfied for each $n$, then Lemma \ref{lem:coneinverselim} will show that $\mathcal M(\Lambda)$ has a base which is a compact metrizable Choquet simplex. The property may \emph{not} be satisfied for every lamination though, since there may be arcs or minimal sub-laminations of $\Lambda_{n+1}$ which do not intersect $X_n$ (see e.g. Example \ref{ex:nobase1}).
\end{rem}
Thus, we will attempt to modify our exhaustion $X_1\subset X_2 \subset \ldots$ to have this property. The key lemma to prove is the following:

\begin{lem}
\label{lem:exhaustionmodification}
Let $U\subset V$ be punctured compact subsurfaces of $X$ with geodesic boundary. Let $\Lambda$ be a geodesic lamination on $X$ such that every leaf of $\Lambda$ intersects $U$. Then there is a larger punctured compact subsurface $W\supset V$ with geodesic boundary such that every geodesic of $\Lambda \cap W$ intersects $U$.
\end{lem}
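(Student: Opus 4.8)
The plan is to prove the statement by a compactness argument in the space of complete geodesics of $\widetilde X$, using convexity of the lifts of subsurfaces with geodesic boundary. The essential reduction is that I only need to control the \emph{arc} components of $\Lambda\cap W$: every compact minimal sub-lamination of $\Lambda\cap W$ lies in the interior of $W$ and so consists of complete leaves of $\Lambda$, each of which meets $U$ by hypothesis; likewise any component of $\Lambda\cap W$ that is a complete (bi-infinite) leaf of $\Lambda$ contained in $W$ meets $U$. Thus it suffices to produce a $W$ admitting no component of $\Lambda\cap W$ that is a proper arc or ray missing $U$.

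First I would record the convexity input. For a punctured compact subsurface $Y\subset X$ with geodesic boundary, each component of the preimage of $Y$ in $\widetilde X$ is convex, so a lifted leaf $\widetilde L$ meets such a component $\widetilde Y$ in a connected (possibly empty) arc. Consequently, if $c$ is a component of $L\cap W$ for a non-closed leaf $L$, then $c$ lifts to $\widetilde L\cap\widetilde W$ for an appropriate lift $\widetilde L$ and component $\widetilde W$ of the preimage of $W$, and this lift is connected. This is what lets me detect a ``bad'' component by a single point lying on its lift.

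Now I would argue by contradiction. Suppose no $W\supset V$ works, and fix an exhaustion $V=W^{(1)}\subset W^{(2)}\subset\cdots$ by punctured compact subsurfaces with geodesic boundary, with $W^{(k)}\subset\operatorname{int}W^{(k+1)}$ and $\bigcup_k W^{(k)}=X$. For each $k$ choose a component $\alpha_k$ of $\Lambda\cap W^{(k)}$ missing $U$, lying on a non-closed leaf $L_k$, and lift it to $\widetilde\alpha_k=\widetilde L_k\cap\widetilde{W^{(k)}}$, picking $p_k\in\widetilde\alpha_k$. The main obstacle is that $\widetilde\alpha_k$ can run off toward $\partial_\infty\widetilde X$ as $k\to\infty$, so that the geodesics $\widetilde L_k$ degenerate to a point at infinity and no limiting leaf is produced. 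I resolve this by \emph{anchoring}: fix a compact fundamental domain $D$ with basepoint $\ast$, pick $\phi_k\in\pi_1(X)$ with $\phi_k p_k\in D$, and set $\widetilde L_k'=\phi_k\widetilde L_k$. Because the preimage of $U$ is $\pi_1(X)$-invariant, $\phi_k\widetilde\alpha_k=\widetilde L_k'\cap\phi_k\widetilde{W^{(k)}}$ still misses the entire preimage of $U$; and since $\phi_k p_k\in D$ lies in the component $\widetilde{W^{(k)}}_\ast$ of the preimage of $W^{(k)}$ through $\ast$, we have $\phi_k\widetilde{W^{(k)}}=\widetilde{W^{(k)}}_\ast$. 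By Lemma \ref{lem:firstkindgeodesics} these components increase to $\widetilde X$, so every compact subset of $\widetilde X$ is eventually contained in $\widetilde{W^{(k)}}_\ast$.

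Finally I would take a limit. The geodesics $\widetilde L_k'$ all meet the fixed compact set $D$, so a subsequence converges to a complete geodesic $\widetilde L_\infty'$, which is a lift of a leaf of $\Lambda$ because the preimage lamination is closed in the space of geodesics. For any compact $K\subset\widetilde X$ we have $K\subset\widetilde{W^{(k)}}_\ast$ for large $k$, whence $\widetilde L_k'\cap K\subset\phi_k\widetilde\alpha_k$ misses the (open) preimage of the interior of $U$; passing to the limit, $\widetilde L_\infty'\cap K$ misses it as well. As $K$ is arbitrary, the leaf $L_\infty'$ misses the interior of $U$, contradicting the hypothesis that every leaf of $\Lambda$ meets $U$. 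The borderline possibility of a leaf meeting $U$ only along the geodesic boundary $\partial_0 U$ is excluded by a collar/transversality argument (or by first replacing $U$ with a slightly smaller subsurface still met by every leaf in its interior). This contradiction shows that some $W^{(k)}\supset V$ has the desired property, completing the proof.
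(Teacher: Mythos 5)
Your strategy---argue by contradiction along an exhaustion, recenter the offending arcs by deck transformations, and extract a limiting leaf of $\Lambda$ disjoint from $U$---is genuinely different from the paper's, which explicitly builds $W$ by attaching finitely many strips along $\partial V$ so that every leaf crossing $\partial V$ is guided into $U$. Unfortunately there is a genuine gap at the anchoring step. First, $X$ is of infinite type and hence non-compact, so the action of $\pi_1(X)$ on $\widetilde X$ admits no compact fundamental domain; a deck transformation can move $p_k$ into a fixed compact subset of $\widetilde X$ only if the projection of $p_k$ to $X$ already lies in a fixed compact subset of $X$. Second, and more substantively, there is no reason the bad components $\alpha_k$ should meet any fixed compact subset of $X$: the property ``every component of $\Lambda\cap W$ meets $U$'' is not monotone in $W$, because enlarging $W$ to $W'$ can create new components of $\Lambda\cap W'$ lying entirely in $W'\setminus W$. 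A leaf that meets $U$ may also make an excursion far out in the surface which, inside $W^{(k)}$, is cut off from the part of the leaf that reaches $U$; as $k$ grows such excursions can occur farther and farther from $U$, so the witnesses $\alpha_k$ escape every compact set and the limiting construction produces nothing. This is precisely the difficulty the paper's proof is built to handle: the compactness it exploits is that of $\Lambda\cap\partial V$ (a gateway that every escaping component must cross), and the strips are attached so that $\Lambda\cap\partial W\subset\Lambda\cap\partial V$, which excludes bad components near infinity by construction rather than by a limit.

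A secondary issue: even where the recentering can be carried out, the limit only shows that $L_\infty'$ misses the \emph{interior} of $U$, and the borderline case is not disposable as claimed. If a component $\gamma$ of $\partial U$ is itself a leaf of $\Lambda$, then $\gamma$ meets $U$ (so no contradiction arises), and no subsurface $U'\subset U$ can contain $\gamma$ in its interior, so the ``slightly smaller $U$'' fix is unavailable exactly when it is needed. The lemma as stated makes no transversality assumption on $\partial U$, so this case must be addressed (in the paper's application the exhaustion is chosen transverse to $\Lambda$, but the lemma itself is more general).
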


\begin{proof}
The idea of the proof is that we will construct $W$ by gluing on \emph{strips} $[0,1]\times [0,1]$ to the boundary of $V$. We will do this as follows: if a geodesic of $\Lambda\cap V$ doesn't hit $U$ then we may extend the geodesic in one direction until it \emph{does} hit $U$. The extended geodesic leaves $V$ finitely many times and then eventually enters $U$. We will add on a strip containing each arc where the extended geodesic leaves $V$. The resulting subsurface may not be essential so we finish the proof by adding on disks and punctured disks and homotoping the boundary components to geodesics.

Every leaf of $\Lambda$ which is contained entirely in $V$ must intersect $U$ by hypothesis. So we focus on geodesics of $\Lambda\cap V$ which have at least one endpoint on $\partial V$. Consider $p\in \partial V\cap \Lambda$. It is contained in a leaf $L$ of $\Lambda$. Choose an orientation for the geodesic $L$ and denote by $L|[p,\infty)$ and $L|(-\infty,p]$ the rays of $L$ based at $p$ which are oriented \emph{away from} $p$ and \emph{towards} $p$, respectively. At least one of these two rays intersects $U$; say $L|[p,\infty)$, without loss of generality. Consider the first intersection point $q$ of $L|[p,\infty)$ with $U$. This gives rise to a sub-arc $L|[p,q]$ of $L$ from $p$ to $q$. There is a small open arc $I_p$ of $\partial V$ containing $p$ for which all rays of $\Lambda$ through $I_p$ in the direction of $L|[p,\infty)$ contain a sub-arc with endpoints in $I_p$ and $\partial U$ which is homotopic to $L|[p,q]$ (through homotopies preserving the boundary components). 

The arc $L|[p,q]$ leaves $V$ at most finitely many times and then hits $\partial U$ at $q$. We may take a small neighborhood of $L|[p,q]$ containing all the homotopic arcs through $I_p$ and $\partial U$. In the complement of $V$, this consists of a finite disjoint union $S_p$ of \emph{strips} $[0,1]\times [0,1]$ such that: (1) the \emph{horizontal boundary components} $[0,1]\times \{0\}$ and $[0,1]\times \{1\}$ are contained in $\partial V$; (2) the \emph{vertical boundary components} are disjoint from $\Lambda$; and (3) any leaf of $\Lambda$ through a point in $I_p$ contains a sub-arc in $V\cup S_p$ homotopic to $L|[p,q]$. In particular, any leaf of $\Lambda$ which passes through $I_p$ contains a sub-arc in $V\cup S_p$ which intersects $U$.

\begin{figure}[h]
\centering
\def\svgwidth{0.5\textwidth}
\begingroup%
  \makeatletter%
  \providecommand\color[2][]{%
    \errmessage{(Inkscape) Color is used for the text in Inkscape, but the package 'color.sty' is not loaded}%
    \renewcommand\color[2][]{}%
  }%
  \providecommand\transparent[1]{%
    \errmessage{(Inkscape) Transparency is used (non-zero) for the text in Inkscape, but the package 'transparent.sty' is not loaded}%
    \renewcommand\transparent[1]{}%
  }%
  \providecommand\rotatebox[2]{#2}%
  \newcommand*\fsize{\dimexpr\f@size pt\relax}%
  \newcommand*\lineheight[1]{\fontsize{\fsize}{#1\fsize}\selectfont}%
  \ifx\svgwidth\undefined%
    \setlength{\unitlength}{1278.73956155bp}%
    \ifx\svgscale\undefined%
      \relax%
    \else%
      \setlength{\unitlength}{\unitlength * \real{\svgscale}}%
    \fi%
  \else%
    \setlength{\unitlength}{\svgwidth}%
  \fi%
  \global\let\svgwidth\undefined%
  \global\let\svgscale\undefined%
  \makeatother%
  \begin{picture}(1,0.80197371)%
    \lineheight{1}%
    \setlength\tabcolsep{0pt}%
    \put(0,0){\includegraphics[width=\unitlength,page=1]{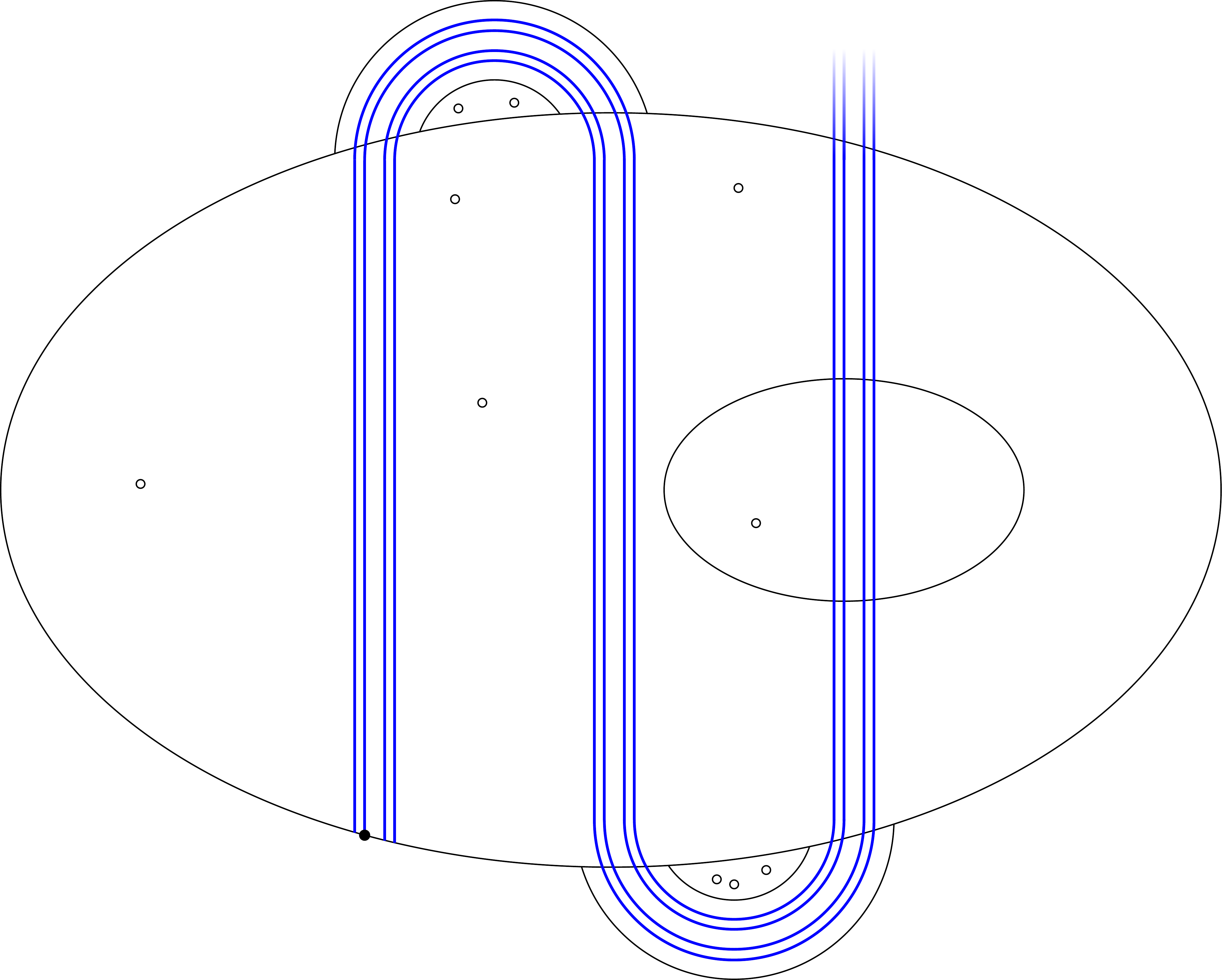}}%
    \put(0.29685716,0.07530394){\color[rgb]{0,0,0}\makebox(0,0)[lt]{\lineheight{1.25}\smash{\begin{tabular}[t]{l}\textit{$p$}\end{tabular}}}}%
    \put(0,0){\includegraphics[width=\unitlength,page=2]{strips.pdf}}%
    \put(0.6377643,0.28640089){\color[rgb]{0,0,0}\makebox(0,0)[lt]{\lineheight{1.25}\smash{\begin{tabular}[t]{l}\textit{$q$}\end{tabular}}}}%
    \put(0,0){\includegraphics[width=\unitlength,page=3]{strips.pdf}}%
    \put(0.5355293,0.76211306){\color[rgb]{0,0,0}\makebox(0,0)[lt]{\lineheight{1.25}\smash{\begin{tabular}[t]{l}\textit{$S_p$}\end{tabular}}}}%
    \put(0.74621159,0.02887261){\color[rgb]{0,0,0}\makebox(0,0)[lt]{\lineheight{1.25}\smash{\begin{tabular}[t]{l}\textit{$S_p$}\end{tabular}}}}%
    \put(0.62545041,0.40098736){\color[rgb]{0,0,0}\makebox(0,0)[lt]{\lineheight{1.25}\smash{\begin{tabular}[t]{l}\textit{$U$}\end{tabular}}}}%
    \put(0.90742642,0.40098736){\color[rgb]{0,0,0}\makebox(0,0)[lt]{\lineheight{1.25}\smash{\begin{tabular}[t]{l}\textit{$V$}\end{tabular}}}}%
  \end{picture}%
\endgroup%

\caption{Adding on strips $[0,1]\times [0,1]$ to form the surface $V_0$. In a small neighborhood of $p\in \partial V\cap \Lambda$, all geodesics fellow travel the ray $L|[p,\infty)$ long enough to have an arc homotopic to $L|[p,q]$. All such homotopic arcs are contained in the union of $V$ with $S_p$, which consists of two strips.}
\label{fig:strips}
\end{figure}

Now, the arcs $I_p$ form an open cover of the compact set $\Lambda \cap \partial V$. Consider a finite sub-cover $I_{p_1} \cup \ldots \cup I_{p_k}$. We form the subsurface \[V_0\vcentcolon=V\cup \bigcup_{i=1}^k S_{p_i}.\] Note that the boundary of $V_0$ consists of some subset of $\partial V$ along with subsets of the \emph{vertical} boundary components of the strips in the finite unions $S_{p_i}$. Since the vertical boundary components of all the strips are disjoint from $\Lambda$, $\Lambda \cap \partial V_0$ is contained in $\Lambda\cap \partial V$. Thus, every point of $\Lambda \cap \partial V_0$ lies in $I_{p_i}$ for some $i$ and therefore the geodesic of $\Lambda \cap V_0$ through such a point intersects $U$.

Now, the subsurface $V_0$ may not be essential: some of its boundary components may bound disks or once-punctured disks. Form $V_1$ by taking the union of $V_0$ with all disks or once-punctured disks bounded by any of the components of $\partial V_0$. Any geodesic in $V_1 \cap \Lambda$ contains at least one geodesic (and possibly multiple geodesics) of $V_0\cap \Lambda$. Hence, any geodesic in $V_1\cap \Lambda$ intersects $U$.

Finally, we form the subsurface $W$ by homotoping the boundary components of $V_1$ to their geodesic representatives. Since $V_1$ contains $V$, so does $W$. Finally, we claim that every geodesic of $\Lambda \cap W$ intersects $U$. Consider a connected component $\widetilde W$ of the pre-image of $W$ in the universal cover $\widetilde X$ and let $\pi_1(W)$ act on $\widetilde X$ stabilizing this component. Then there is a unique component $\widetilde{V_1}$ of the pre-image of $V_1$ which is also stabilized by $\pi_1(W)$. Consider a geodesic of $W\cap \Lambda$ with at least one endpoint on $\partial W$. It is contained in a leaf $L$ of $\Lambda$. This geodesic of $W\cap \Lambda$ lifts to a geodesic contained in $\widetilde W$ with one endpoint on a geodesic $p$ of $\partial_0\widetilde W$ and the other endpoint in a component of $\partial_0 \widetilde W \cup \partial_\infty \widetilde{W}$, which we call $q$. This lifted geodesic is also contained in a lift $\widetilde L$ of $L$. There is a component of $\partial_0 \widetilde{V_1}$ with the same endpoints as $p$ and similarly a component of $\partial_0 \widetilde{V_1}\cup \partial_\infty \widetilde{V_1}$ corresponding to $q$. Since any arc of $L\cap V_1$ intersects $U$, any arc of $\widetilde{L}\cap \widetilde{V_1}$ intersects some lift of $U$. There is thus a component $g$ of the pre-image of $\partial U$ separating the components of $\partial_0 \widetilde{V_1} \cup \partial_\infty \widetilde{V_1}$ corresponding to $p$ and $q$. The component $g$ therefore also separates $p$ and $q$. Thus our lifted geodesic intersects a lift of $U$.
\end{proof}

Now we may prove Theorem \ref{mainthm:choquetbase}.

\begin{proof}[Proof of Theorem \ref{mainthm:choquetbase}]
Begin with an exhaustion $X_1\subset X_2\subset \ldots $ of $X$ by punctured compact subsurfaces of $X$ with geodesic boundary. We will modify the $X_i$ to an exhaustion $Y_1\subset Y_2 \subset Y_3\subset \ldots$ such that every geodesic of $\Lambda\cap Y_i$ intersects $Y_1$ for each $i$ (and in particular every geodesic of $\Lambda \cap Y_i$ intersects $Y_{i-1}$). First choose $X_n$ large enough that every leaf of $\Lambda$ intersects $X_n$. Set $Y_1=X_n$. Now $X_{n+1}$ is a subsurface containing $Y_1$ and by Lemma \ref{lem:exhaustionmodification} there is a punctured compact subsurface $Y_2$ containing $X_{n+1}$ with the property that every geodesic in $\Lambda \cap Y_2$ intersects $Y_1$. Choose $m>n+1$ large enough that $X_m\supset Y_2$. Again by Lemma \ref{lem:exhaustionmodification}, there is a punctured compact subsurface $Y_3$ containing $X_m$ such that every geodesic in $\Lambda\cap Y_3$ intersects $Y_1$. Repeat this process inductively to form the desired exhaustion $Y_1\subset Y_2 \subset Y_3 \subset \ldots$.

Set $C_i$ to be the cone $C(Y_i)$ of weights on $\Lambda\cap Y_i$ for each $i$. There is an inverse system \[C_1 \xleftarrow{\pi_1} C_2 \xleftarrow{\pi_2} C_3  \xleftarrow{\pi_3 }\ldots.\] By Remark \ref{rem:choquetcriterion} and Lemma \ref{lem:coneinverselim}, the inverse limit $\mathcal M(\Lambda)\cong \varprojlim C_i$ has a base $\varprojlim B_i$ where $B_i$ is a base of $C_i$ and $\varprojlim B_i$ is a compact metrizable Choquet simplex.
\end{proof}

\subsection{Examples of bases}

In this section we re-visit the laminations of Examples \ref{ex:choquetexample1} and \ref{ex:choquetexample2} from Section \ref{sec:examplecones} and describe bases for them as Choquet simplices.

\begin{ex}
Consider the lamination $\Lambda$ in Example \ref{ex:choquetexample1}. The cone of transverse measures is  the cone $C\subset \ell^1$ defined by $C=\{(x,y_1,y_2,\ldots) : y_i \geq 0 \text{ for all } i \text{ and } x\geq \sum y_i\}$ with the weak${}^*$ topology obtained from the pre-dual $c_0$. A base for $C$ is given by the convex set $B$ defined by $x=1$. Thus $B=\{(1,y_1,y_2,\ldots) : y_i \geq 0 \text{ for all } i \text{ and } 1\geq \sum y_i\}$. The base $B$ is a Choquet simplex. Its extreme points are \[e=(1,0,0,\ldots) \text{ and } e_i=(1,0,0,\ldots,0,1,0,\ldots)\] where $e_i$ has a 1 in the $i^{\text{th}}$ position. The points $e_i$ are isolated in the space $\operatorname{Ext}(B)$ of extreme points while $e_i\to e$ as $i\to \infty$. Thus $\operatorname{Ext}(B)$ is homeomorphic to the ordinal $\omega+1$. We may also identify the extreme points $e_i$ and $e$ with explicit measures on $\Lambda$. Namely, denote by $L_i$ the isolated leaves of $\Lambda$ and by $L$ the non-isolated leaf, so that $L_i\to L$ as $i\to\infty$. Then $e_i$ is identified with the $\delta$-mass on $L_i$ for each $i$ (which assigns to a transversal its number of intersections with $L_i$), while $e$ is identified with the $\delta$-mass on $L$.

By choosing bases $B_n$ for the cones $C_n=C(X_n)$ as in Lemma \ref{lem:coneinverselim} (where $X_1\subset X_2\subset \ldots$ is the exhaustion chosen in Example \ref{ex:choquetexample1}) we may consider $B$ to be the inverse limit $\varprojlim B_n$. The base $B_n$ has $n$ vertices $v_1^n,\ldots,v_n^n$. The map $B_{n+1}\to B_n$ is defined by $v_i^{n+1}\mapsto v_i^n$ for $1\leq i\leq n$ and $v_{n+1}^{n+1}\mapsto v_n^n$. It is instructive to consider what the extreme points of the inverse limit are. They are: \[e_i=(v_1^1,v_2^2,\ldots,v_i^i,v_i^{i+1},v_i^{i+2},\ldots) \text{ and } e=(v_1^1,v_2^2,\ldots,v_n^n,\ldots).\]
\end{ex}

\begin{ex}
Consider the lamination $\Lambda$ in Example \ref{ex:choquetexample2}. The cone of transverse measures is now the set $C\subset \ell^1$ consisting of sequences $(x_1,x_2,y_1,y_2,y_3,\ldots)$ with $y_i\geq 0$ for all $i$ and $x_j\geq \sum_i y_i$ for $j=1,2$. A Choquet simplex base $B$ is defined by $x_1+x_2=1$ (and $y_i\geq 0$, $x_1,x_2\geq \sum_i y_i$). Its extreme points are \[f_1=(1,0,0,0,\ldots), \ \ f_2=(0,1,0,0,\ldots), \text{ and } e_i=\left(\frac{1}{2},\frac{1}{2},0,0,\ldots,0,\frac{1}{2},0\right)\] where $e_i$ has a $1/2$ in the $i^{\text{th}}$ position. The set of extreme points $\operatorname{Ext}(B)$ is not closed: $f_1,f_2,$ and $e_i$ are all isolated in $\operatorname{Ext}(B)$ while $e_i\to \frac{1}{2}f_1+\frac{1}{2}f_2$, which does not lie in $\operatorname{Ext}(B)$. Here $f_1$ and $f_2$ are identified with $\delta$-masses on the proper non-isolated leaves $L$ and $L'$, respectively. The points $2e_i$ are identified with $\delta$-masses on the proper isolated leaves $L_i$, which converge to $L\cup L'$ as $i\to \infty$.

Again consider the cones $C_n=C(X_n)$ and suitable bases $B_n$ for $C_n$. Then $B_n$ has $n+1$ vertices $v_i^n$. The map $B_{n+1}\to B_n$ is defined by $v_i^{n+1}\mapsto v_i^n$ for $1\leq i\leq n+1$ and $v_{n+2}^{n+1}\mapsto \frac{1}{2}v_1^n+\frac{1}{2}v_2^n$. The extreme points are $f_1=(v_1^1,v_1^2,v_1^3,\ldots)$,  $f_2=(v_2^1,v_2^2,v_2^3,\ldots)$, and \[e_i=\left(\frac{1}{2}v_1^1+\frac{1}{2}v_2^1,\frac{1}{2}v_1^2+\frac{1}{2}v_2^2,\ldots,\frac{1}{2}v_1^i+\frac{1}{2}v_2^i,v_{i+2}^{i+1}, v_{i+2}^{i+2},v_{i+2}^{i+3},\ldots \right).\]
\end{ex}

\section{Inverse limit laminations}
\label{sec:inverselimlams}

In this section we consider a construction of laminations as ``inverse limits'' of systems of arcs. We consider the hyperbolic surface $X$ of the first kind and an exhaustion $X_1\subset X_2 \subset \ldots$ by punctured compact subsurfaces with geodesic boundary. We consider the universal cover $\widetilde X$, which is isometric to the hyperbolic plane and fix a basepoint $\ast \in \widetilde X$ in the pre-image of $X_1$. By Lemma \ref{lem:firstkindgeodesics}, if $\widetilde{X_n}$ is the unique component of the pre-image of $X_n$ containing $\ast$, we have that $\widetilde{X} = \bigcup_{n=1}^\infty \widetilde{X_n}$.

Consider the compactification $\widetilde X \cup \partial_\infty \widetilde X$ where $\partial_\infty \widetilde X\cong S^1$ is the Gromov boundary. Recall that $\partial_\infty \widetilde{X_n}$ denotes the intersection of the closure of $\widetilde{X_n}$ in $\widetilde X \cup \partial_\infty \widetilde X$ with $\partial_\infty \widetilde X$. The complement $\partial_\infty \widetilde X\setminus \partial_\infty \widetilde{X_n}$ is a countable collection of open intervals. If $n\leq m$ then the (interval) components of $\partial_\infty \widetilde X \setminus \partial_\infty \widetilde {X_m}$ are nested in the components of $\partial_\infty \widetilde X \setminus \partial_\infty \widetilde {X_n}$. Since $\bigcup_{n=1}^\infty \widetilde{X_n}=\widetilde X$, we have the following fact: if $I_n$ are components of $\partial_\infty \widetilde{X} \setminus \partial_\infty \widetilde{X_n}$ with $I_1\supset I_2\supset \ldots$, then the intersection $\bigcap_{n=1}^\infty I_n$ consists of a single point of $\partial_\infty \widetilde X$. For each component $I$ of $\partial_\infty \widetilde X \setminus \partial_\infty \widetilde{X_n}$, there is a unique component of $\partial_0 \widetilde{X_n}$ (the topological boundary of $\widetilde{X_n}$ as a subset of $\widetilde{X}$) joining its endpoints.

For each $n$, fix a (finite) collection $A_n$ of pairwise disjoint, pairwise non-homotopic, homotopically non-trivial arcs in $X_n$ with both endpoints on $\partial X_n$. We suppose without loss of generality that each $\ell \in A_n$ has been chosen to be a  geodesic, so that it intersects $X_m$ minimally for each $m\leq n$. We say that the system (set) $\{A_n\}_{n=1}^\infty$ is \emph{directed} if it satisfies the following conditions: (1) for each $\ell\in A_{n+1}$, the arcs of the intersection $\ell\cap X_n$ are homotopic to arcs in $A_n$; and (2) for each $\ell\in A_n$, there is an arc $\ell'\in A_{n+1}$ such that $\ell'\cap X_n$ contains an arc homotopic to $\ell$.
Fix a directed system of collections of arcs $\{A_n\}_{n=1}^\infty$. We will now construct a lamination $\Lambda$ on $X$. The lamination $\Lambda$ will consist of all geodesics on $X$ which intersect each $X_n$ in a family of arcs homotopic to the arcs in $A_n$. To verify that this is a lamination will take a bit of work. 

Each $\ell \in A_n$ lifts to an infinite set of arcs in $\widetilde{X_n}$. Consider two (interval) components  $I$ and $J$ of $\partial_\infty \widetilde X\setminus \partial_\infty \widetilde{X_n}$. There are geodesics $B$ and $C$ of $\partial_0\widetilde{X_n}$ with the same endpoints as $I$ and $J$, respectively. We say that $I$ and $J$ are \emph{joined} by $A_n$ if there is an arc $\ell\in A_n$ and a lift $\widetilde{\ell}$ with one endpoint on $B$ and the other endpoint on $C$. We say that $\ell$ \emph{joins} $B$ and $C$. We define a set of geodesics $\widetilde{\Lambda}$ as follows: if $p,q\in \partial_\infty \widetilde{X}$ and $p\neq q$ then the geodesic $[p,q]$ from $p$ to $q$ in $\widetilde X$ lies in $\widetilde{\Lambda}$ if for some $ n_0\geq 1$ we have \[p=\bigcap_{n=n_0}^\infty I_n \text{ and } q=\bigcap_{n=n_0}^\infty J_n \text{ where } I_{n_0} \supset I_{n_0+1} \supset I_{n_0+2} \supset \ldots \text{ and } J_{n_0}\supset J_{n_0+1} \supset J_{n_0+2}\supset \ldots\] are nested sequences of arcs of $\partial_\infty \widetilde X \setminus \partial_\infty \widetilde{X_n}$ with the property that $I_n$ is joined to $J_n$ by $A_n$ for each $n\geq n_0$. See Figure \ref{fig:inverselimlam}.

\begin{figure}[h]
\centering
\def\svgwidth{0.6\textwidth}
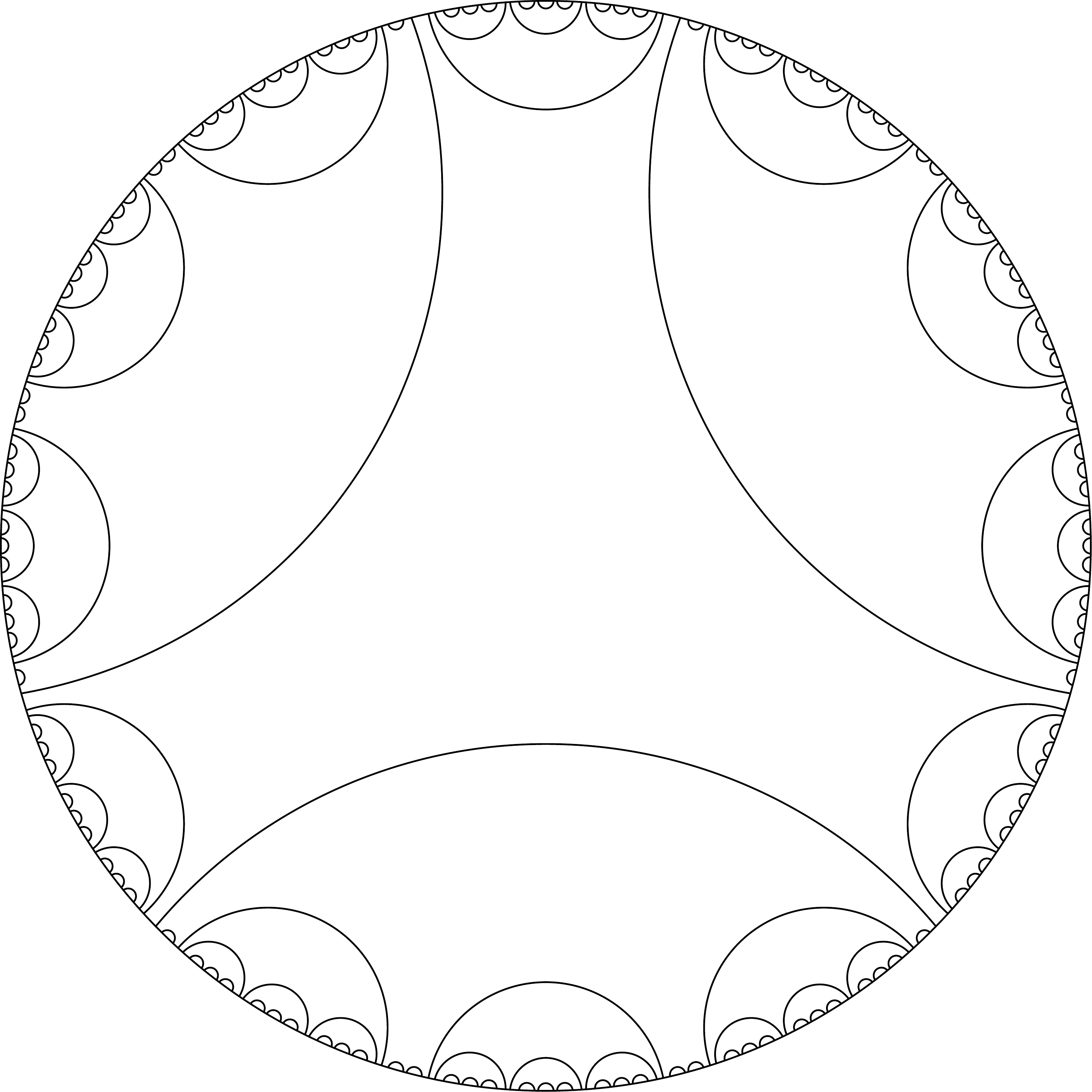

\caption{A geodesic $[p,q]$ in $\widetilde{X}$. Dotted green lines denote lifts of arcs in $A_n$, joining components of $\partial_0\widetilde{X_n}$. The geodesic $[p,q]$ lies in $\widetilde{\Lambda}$ since there are elements of $\widetilde{A_n}$ joining the pairs of intervals containing $p$ and $q$, respectively. Here we may take $n_0=2$.}
\label{fig:inverselimlam}
\end{figure}

It is notable that every lamination on $X$ without compact sub-laminations or leaves asymptotic to isolated punctures arises in this way, as may be seen by considering the arcs of intersection with each subsurface $X_n$. Thus, the construction is reversible.

We will now investigate the properties of $\widetilde \Lambda$ and show that it descends to a lamination on $X$. By the invariance under $\pi_1(X_n)$ of the property of intervals of $\partial_\infty \widetilde{X} \setminus \partial_\infty \widetilde{X_n}$ being joined by $A_n$, one sees that $\widetilde \Lambda$ is invariant under $\pi_1(X)$. Moreover:

\begin{lem}
\label{lem:inverselimnocross}
No two geodesics of $\widetilde \Lambda$ cross.
\end{lem}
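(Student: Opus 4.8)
The plan is to argue by contradiction, converting a crossing of two geodesics of $\widetilde\Lambda$ at infinity into a genuine crossing of two lifted arcs inside a single $\widetilde{X_n}$, where the disjointness of lifts of the arcs of $A_n$ produces a contradiction. The bridge is the standard fact that two distinct geodesics of $\widetilde X \cong \mathbb{H}^2$ cross in an interior point if and only if their endpoint pairs are \emph{linked} on the circle $\partial_\infty \widetilde X$.

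So suppose $g=[p,q]$ and $g'=[p',q']$ in $\widetilde\Lambda$ cross. Then $p,q,p',q'$ are four distinct points of $\partial_\infty\widetilde X$, occurring in the cyclic order $p,p',q,q'$. By the definition of $\widetilde\Lambda$ there is an index from which on we have nested sequences of components of $\partial_\infty\widetilde X\setminus \partial_\infty\widetilde{X_n}$, say $I_n\ni p$, $J_n\ni q$ for $g$, and $I'_n\ni p'$, $J'_n\ni q'$ for $g'$, with $I_n$ joined to $J_n$ and $I'_n$ joined to $J'_n$ by $A_n$. First I would fix $n$ large. Since $\bigcap_n I_n=\{p\}$ (and likewise for the other three), and the four limit points are distinct, for all sufficiently large $n$ the four components $I_n,J_n,I'_n,J'_n$ are pairwise distinct; being distinct components of $\partial_\infty\widetilde X\setminus\partial_\infty\widetilde{X_n}$, they are then pairwise disjoint, and they inherit the cyclic order $I_n,I'_n,J_n,J'_n$ from $p,p',q,q'$.

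Let $B,B',C,C'$ be the components of $\partial_0\widetilde{X_n}$ subtending $I_n,I'_n,J_n,J'_n$. These are four distinct boundary geodesics of the convex region $\widetilde{X_n}$, and since each subtending geodesic shares its endpoints with the corresponding interval, they appear on the boundary of $\widetilde{X_n}$ in the same cyclic order $B,B',C,C'$. Because $A_n$ joins $I_n$ to $J_n$, there is a lift $\widetilde\ell$ of an arc of $A_n$ running inside $\widetilde{X_n}$ from $B$ to $C$; because $A_n$ joins $I'_n$ to $J'_n$, there is a lift $\widetilde{\ell'}$ running from $B'$ to $C'$. The region $\widetilde{X_n}$ is a topological disk with boundary circle $\partial_0\widetilde{X_n}\cup\partial_\infty\widetilde{X_n}$, and the properly embedded arc $\widetilde\ell$ separates it into two pieces; because $B,B',C,C'$ are cyclically ordered, $B'$ lies on one side and $C'$ on the other, so every arc from $B'$ to $C'$ — in particular $\widetilde{\ell'}$ — must meet $\widetilde\ell$. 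On the other hand, the arcs of $A_n$ are pairwise disjoint simple geodesics, so any two of their lifts either coincide or are disjoint; since $\{B,C\}\neq\{B',C'\}$, the lifts $\widetilde\ell$ and $\widetilde{\ell'}$ are distinct, hence disjoint. This contradiction completes the proof.

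The main obstacle is the middle bookkeeping step: one must verify carefully that the four complementary intervals can be arranged to be pairwise disjoint and in the correct cyclic order for large $n$, and that their subtending boundary geodesics of $\widetilde{X_n}$ inherit exactly this order, so that linking of the endpoints at infinity is faithfully transported to linking on $\partial\widetilde{X_n}$. Once this correspondence is established, the topological ``linked endpoints force a crossing'' statement inside the disk $\widetilde{X_n}$ and the disjointness of lifts of the arcs $A_n$ combine to give the contradiction immediately.
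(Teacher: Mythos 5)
Your proof is correct and follows essentially the same route as the paper: both arguments translate linking of $\{p,q\}$ and $\{p',q'\}$ on $\partial_\infty\widetilde X$ into, for large $n$, separation of the corresponding complementary intervals and hence of the boundary geodesics of $\widetilde{X_n}$, forcing the two lifted arcs of $A_n$ to cross and contradicting disjointness. Your version just spells out the cyclic-order bookkeeping and the disk-separation step in more detail than the paper does.
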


\begin{proof}
Consider $[p,q]$ and $[p',q']$ geodesics of $\widetilde \Lambda$. We must show that the pairs $\{p,q\}$ and $\{p',q'\}$ do not separate each other as pairs of points in the circle. For $n_0$ sufficiently large, we may write \[p=\bigcap_{n=n_0}^\infty I_n, \ \ q=\bigcap_{n=n_0}^\infty J_n, \ \ p'=\bigcap_{n=n_0}^\infty I'_n, \ \ q'=\bigcap_{n=n_0}^\infty J'_n\] where $I_n$ and $J_n$ are joined by $A_n$ for each $n$ and similarly for $I'_n$ and $J'_n$. If $\{p,q\}$ separates $\{p',q'\}$ then for all $n$ sufficiently large, $I_n$ and $J_n$ separate $I'_n$ and $J'_n$. Denote by $B_n,C_n,B'_n,$ and $C'_n$ the components of $\partial_0 \widetilde{X_n}$ with the same endpoints as $I_n,J_n,I'_n,$ and $J'_n$, respectively. Then there are $\ell,\ell' \in A_n$ with lifts $\widetilde{\ell},\widetilde{\ell'}$ with endpoints on $B_n$ and $C_n$ and on $B_n'$ and $C_n'$, respectively. But then for $n\gg 0$, $B_n$ and $C_n$ separate $B_n'$ and $C_n'$ so that $\widetilde{\ell}$ and $\widetilde{\ell'}$ cross, and the same is true for $\ell$ and $\ell'$. This contradicts that the arcs in $A_n$ are pairwise disjoint.
\end{proof}

Hence the image of $\widetilde \Lambda$ in $X$ is a family of pairwise non-crossing simple geodesics. Denote this image by $\Lambda$. In order to show that $\Lambda$ is a geodesic lamination, it suffices to show that $\widetilde \Lambda$ is closed.

\begin{lem}
\label{lem:inverselimclosed}
The set $\widetilde \Lambda$ is closed in $\widetilde X$. Hence $\Lambda$ is closed in $X$.
\end{lem}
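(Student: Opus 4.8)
The plan is to prove closedness in the space of bi-infinite geodesics of $\widetilde X$, which we identify with the set of unordered pairs of distinct points of $\partial_\infty \widetilde X$; this space is metrizable, so it suffices to show $\widetilde\Lambda$ is sequentially closed, and closedness of $\Lambda$ in $X$ then follows since $\widetilde\Lambda$ is the full $\pi_1(X)$-invariant preimage. So I would take a sequence $[p_k,q_k]$ of geodesics of $\widetilde\Lambda$ converging to a geodesic $[p,q]$ with $p\neq q$ (the only limits that survive in this space) and aim to produce, for $[p,q]$, the nested families of complementary intervals joined by $A_n$ demanded by the definition.

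The engine of the argument is a coherence property coming from condition (1) in the definition of a directed system. If at level $n+1$ two complementary intervals $I_{n+1}\ni x$ and $J_{n+1}\ni y$ are joined by $A_{n+1}$, then the larger level-$n$ intervals $I_n\supset I_{n+1}$ and $J_n\supset J_{n+1}$ containing $x$ and $y$ are joined by $A_n$: an arc of $A_{n+1}$ realizing the joining crosses $X_n$ in arcs homotopic to arcs of $A_n$, and the relevant sub-arc joins the boundary geodesics corresponding to $I_n$ and $J_n$. Pushing this down one level at a time shows that a geodesic belongs to $\widetilde\Lambda$ as soon as the joining condition holds at \emph{arbitrarily large} levels, since coherence then propagates it to every level past the stage at which the two endpoints first lie in complementary intervals. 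I would first record this more flexible criterion for membership in $\widetilde\Lambda$.

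Next I would transfer the joining condition from the $[p_k,q_k]$ to the limit, one level at a time. Fix a large $n$ and suppose $p$ and $q$ lie in the \emph{interiors} of complementary intervals $I_n\ni p$ and $J_n\ni q$ at level $n$. Since these intervals are open and $p_k\to p$, $q_k\to q$, we get $p_k\in I_n$ and $q_k\in J_n$ for all large $k$; applying the coherence criterion to $[p_k,q_k]\in\widetilde\Lambda$ shows that the level-$n$ complementary intervals containing $p_k,q_k$, namely $I_n$ and $J_n$ themselves, are joined by $A_n$. As $n$ ranges over all large indices the intervals $I_n$ nest down to $p$ and the $J_n$ to $q$ (using that nested complementary intervals intersect in a single point), producing exactly the joined nested families that certify $[p,q]\in\widetilde\Lambda$.

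The main obstacle is the standing hypothesis of the previous paragraph: that the limit endpoints $p,q$ lie in $\mathcal{E}:=\partial_\infty\widetilde X\setminus\bigcup_n\partial_\infty\widetilde{X_n}$, i.e.\ in the interior of some complementary interval at every large level, rather than escaping onto a point of some limit set $\partial_\infty\widetilde{X_m}$ --- most dangerously onto an endpoint of a boundary geodesic $B$ of $\partial_0\widetilde{X_m}$, in which case the limit would be asymptotic to $B$ and could not be described by nested complementary intervals at all. Ruling this out is the heart of the proof, and I expect it to be the hard part. I would argue by contradiction using equivariance and finiteness: if $p_k\to u$ with $u$ an endpoint of such a $B$, the level-$(m+1)$ complementary intervals containing the $p_k$ march toward $u$ and are translates of a single fixed interval under the infinite cyclic stabilizer $\langle\gamma\rangle$ of $B$, with $u$ a fixed point of $\gamma$. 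Applying powers $\gamma^{-j_k}$ and using $\pi_1(X_{m+1})$-invariance of the ``joined by $A_{m+1}$'' relation, the partner intervals joined to this fixed interval would be forced to accumulate at the other fixed point of $\gamma$; but only finitely many $\langle\gamma\rangle$-orbits of arcs of the finite system $A_{m+1}$ emanate from a given boundary geodesic, so the intervals they join can accumulate only at the endpoints of that boundary geodesic, a contradiction. The case where $p$ is instead a non-isolated point of $\partial_\infty\widetilde{X_m}$ is handled by the same finiteness principle, and the symmetric reasoning applies to $q$. Once both endpoints are confined to $\mathcal{E}$, the coherence-and-shrinking argument above closes out the proof.
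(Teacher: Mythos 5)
Your overall skeleton matches the paper's: a coherence step showing that the joining condition propagates down levels (the paper uses this implicitly via condition (1) of directedness), a transfer step putting $p_k,q_k$ into the open complementary intervals containing $p,q$ once those exist at every large level, and, as the hard part, the exclusion of limit endpoints landing on $\bigcup_n\partial_\infty\widetilde{X_n}$. The first two steps are fine. The gap is in the exclusion step, which you rightly call the heart of the proof. Your mechanism --- normalize by powers of the cyclic stabilizer $\langle\gamma\rangle$ of a boundary geodesic $B$ and invoke finiteness of $\langle\gamma\rangle$-orbits --- rests on the claim that the complementary intervals $I_k\ni p_k$ accumulating at the fixed point $u$ of $\gamma$ lie in a single (or finitely many) $\langle\gamma\rangle$-orbit. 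That is false in general: boundary geodesics of $\widetilde{X_{m+1}}$ can converge to $u$ while escaping to infinity transversally to the axis $B$ (e.g.\ translates $\gamma^n\delta^{m_n}D$ with $m_n\to\infty$), and these meet infinitely many $\langle\gamma\rangle$-orbits, so no subsequence can be normalized to a fixed interval $I_0$. Even when the normalization works, the contradiction fails if $q$ happens to be the other fixed point of $\gamma$, i.e.\ if the $[p_k,q_k]$ are converging to $B$ itself. Finally, for a limit endpoint $p\in\partial_\infty\widetilde{X_m}$ that is \emph{not} an endpoint of any boundary geodesic there is no cyclic stabilizer to normalize with, and ``the same finiteness principle'' does not apply as stated; this case is the generic one and cannot be waved away.

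The paper's exclusion argument uses a different and more robust finiteness: $A_n$ is a finite collection of \emph{compact} arcs, so all lifts of arcs of $A_n$ have uniformly bounded length; but if the subtending geodesics $B_k$ of $I_k$ converge to $p$ while the $C_k$ subtending $J_k$ stay near $q\neq p$, then $d(B_k,C_k)\to\infty$, so any lift of an arc of $A_n$ joining $B_k$ to $C_k$ would have unbounded length --- a contradiction. This handles the non-endpoint case directly, and the case where $p$ is an endpoint of a geodesic of $\partial_0\widetilde{X_n}$ is reduced to it by passing to a deeper level $m>n$ at which $p$ is no longer an endpoint of a geodesic of $\partial_0\widetilde{X_m}$. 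Replacing your equivariance argument with this length bound would close the gap.
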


\begin{proof}
We must show the following: if $p,q\in \partial_\infty \widetilde X$, $p\neq q$, and $[p_i,q_i]$ are geodesics of $\widetilde \Lambda$ with $p_i\to p$ and $q_i\to q$, then $[p,q]\subset \widetilde \Lambda$. First we show that no ray of $[p,q]$ is contained in $\widetilde{X_n}$ for any $n$.

So suppose that a ray of $[p,q]$ is contained in $\widetilde{X_n}$ for some $n$. Thus, one of the endpoints, say $p$, is contained in the closure of $\widetilde{X_n}$ in $\widetilde X\cup \partial_\infty \widetilde X$. Then $p$ is either an endpoint of a geodesic of $\partial_0\widetilde{X_n}$ or it is not. We deal with the latter case first. In this case, for all sufficiently large $i$, $[p_i,q_i]$ intersects $\widetilde{X_n}$. Denote by $I_i$ and $J_i$ the arcs of $\partial_\infty \widetilde X \setminus \partial_\infty \widetilde X_n$ containing $p_i$ and $q_i$, respectively. Then the arcs $I_i$ converge to $p$. The arcs $J_i$ lie in some common neighborhood of $q$. Let $\ell_i$ be an arc of $A_n$ joining $I_i$ and $J_i$. Then we see that the length of $\ell_i$ goes to infinity as $i\to \infty$. This is a contradiction, since $A_n$ is finite.

Now we consider the case that $p$ is an endpoint of a geodesic of $\partial_0 \widetilde{X_n}$. In this case, for any $m>n$ sufficiently large, $p$ is contained in $\partial_\infty \widetilde{X_m}$ but is not the endpoint of a geodesic of $\partial_0 \widetilde{X_m}$; hence this case reduces to the previous after replacing $n$ by $m$. Thus we have shown that no ray of $[p,q]$ is contained in $\widetilde{X_n}$ for any $n$.

Choosing $n_0$ sufficiently large, $[p,q]$ intersects $\widetilde{X_n}$ for each $n\geq n_0$. Moreover, since neither endpoint lies in $\partial_\infty \widetilde{X_n}$ by what we showed above, we have $p\in I_n$ and $q\in J_n$ for two components $I_n$ and $J_n$ of $\partial_\infty \widetilde X \setminus \partial_\infty \widetilde{X_n}$. Consider the sequence $[p_i,q_i]$ of geodesics in $\widetilde \Lambda$ for each $i$. Then for all $i$ sufficiently large, we also have $p_i\in I_n$ and $q_i\in J_n$. Thus, $A_n$ joins $I_n$ to $J_n$ for each $n\geq n_0$. We have $p=\bigcap_{n=n_0}^\infty I_n$ and $q=\bigcap_{n=n_0}^\infty J_n$ so that $[p,q]\subset \widetilde \Lambda$.
\end{proof}

Combining Lemmas \ref{lem:inverselimnocross} and \ref{lem:inverselimclosed} we have the following:

\begin{thm}
\label{thm:inverselim}
Let $A_n$ be a finite collection of homotopically non-trivial, pairwise disjoint, pairwise non-homotopic arcs on $X_n$. Assume that the system $\{A_n\}_{n=1}^\infty$ is directed. Then the set $\Lambda$ obtained from $\{A_n\}_{n=1}^\infty$ is a geodesic lamination on $X$.
\end{thm}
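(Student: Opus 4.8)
The plan is to assemble the two preceding lemmas into the defining properties of a geodesic lamination, so the proof will be short. Recall that a geodesic lamination on $X$ is by definition a closed subset of $X$ that is a disjoint union of simple complete geodesics. I therefore need three things: that every leaf is a simple complete geodesic, that distinct leaves are disjoint, and that $\Lambda$ is closed.

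First I would record what is already built into the definition of $\widetilde\Lambda$: each of its elements is a bi-infinite geodesic $[p,q]$ with distinct ideal endpoints $p,q\in\partial_\infty\widetilde X$, and hence projects to a complete geodesic in $X$. I would also invoke the observation made just before Lemma \ref{lem:inverselimnocross} that $\widetilde\Lambda$ is invariant under $\pi_1(X)$; this invariance is what lets me read off the behaviour of the lifts of any leaf of $\Lambda$ directly from $\widetilde\Lambda$.

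Next I would translate Lemma \ref{lem:inverselimnocross} into simplicity and disjointness downstairs. The elementary fact I would use is that in the hyperbolic plane two distinct complete geodesics that do not cross are disjoint (they may share an ideal endpoint, but cannot meet in the interior), so the geodesics of $\widetilde\Lambda$ are pairwise either equal or disjoint. Now if some leaf of $\Lambda$ had a self-intersection, or if two distinct leaves of $\Lambda$ met, then by $\pi_1(X)$-invariance two lifts lying in $\widetilde\Lambda$ would cross transversally, contradicting Lemma \ref{lem:inverselimnocross}. Hence every leaf of $\Lambda$ is simple and the leaves are pairwise disjoint. Finally, Lemma \ref{lem:inverselimclosed} gives that $\widetilde\Lambda$ is closed in $\widetilde X$, and therefore $\Lambda$ is closed in $X$. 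Combining closedness, simplicity, and disjointness shows that $\Lambda$ is a geodesic lamination.

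I expect no genuine obstacle to remain, since the substantive content is carried entirely by Lemmas \ref{lem:inverselimnocross} and \ref{lem:inverselimclosed}. The only point that warrants explicit care is the passage from ``non-crossing lifts in $\widetilde X$'' to ``simple, pairwise disjoint leaves in $X$,'' which rests on the standard fact that an intersection or self-intersection of geodesics in $X$ lifts to a transverse crossing of two geodesics in $\widetilde X$; spelling this out is routine.
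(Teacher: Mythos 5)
Your proposal is correct and follows exactly the route the paper takes: the theorem is stated there as an immediate consequence of Lemma \ref{lem:inverselimnocross} (which gives that the projected leaves are simple and pairwise disjoint) and Lemma \ref{lem:inverselimclosed} (which gives closedness), with the passage from non-crossing lifts to disjoint simple leaves downstairs treated as the same routine observation you spell out. No gaps.
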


We call $\Lambda$ the \emph{inverse limit} of the system $\{A_n\}_{n=1}^\infty$. One nice application is that inverse limits allow us to easily construct examples of \emph{minimal} laminations.

\begin{defn}
Let $\Lambda$ be a geodesic lamination. We say that $\Lambda$ is \emph{minimal} if it has no proper sub-laminations.
\end{defn}

Equivalently, a lamination is minimal exactly when all of its leaves are dense in the lamination.

\begin{prop}
\label{prop:inverselimminimal}
Let $A_n$ be a finite set of pairwise non-homotopic, pairwise disjoint, homotopically non-trivial arcs on $X_n$ and suppose that $\{A_n\}_{n=1}^\infty$ is directed. Suppose that $\{A_n\}_{n=1}^\infty$ has the following property: for each $\ell\in A_n$, there is $m_0\geq n$, such that if $m\geq m_0$ then for each $\ell'\in A_m$, $\ell'\cap A_n$ contains an arc homotopic to $\ell$. Then the inverse limit lamination $\Lambda$ of $\{A_n\}$ is minimal.
\end{prop}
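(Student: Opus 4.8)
The plan is to show that every leaf of $\Lambda$ is dense in $\Lambda$; as noted after the definition of minimality, this is equivalent to $\Lambda$ being minimal. Fix a leaf $L$ and an arbitrary point $z \in \Lambda$, lying on some leaf $M$; I must show $z \in \overline{L}$. Passing to $\widetilde X$, let $\tilde z$ be a lift of $z$ on a lift $\tilde M = [p,q]$ of $M$. By the construction of $\Lambda$ there is $n_0$ and nested sequences $I_{n_0} \supset I_{n_0+1} \supset \cdots$ and $J_{n_0}\supset J_{n_0+1}\supset\cdots$ of components of $\partial_\infty \widetilde X \setminus \partial_\infty \widetilde{X_n}$ with $\bigcap_n I_n = \{p\}$ and $\bigcap_n J_n = \{q\}$, such that for each $n \geq n_0$ some arc $\ell_n \in A_n$ joins $I_n$ to $J_n$; write $B_n, C_n \subset \partial_0 \widetilde{X_n}$ for the boundary geodesics with the same endpoints as $I_n, J_n$.

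The heart of the argument is the claim that for every $N \geq n_0$ there is a lift $\tilde L$ of $L$ with one endpoint in $I_N$ and the other in $J_N$. To prove it, apply the hypothesis to $\ell_N \in A_N$ to obtain $m_0 \geq N$ such that every arc of $A_m$ traverses $\ell_N$ whenever $m \geq m_0$. Since $X = \bigcup_m X_m$, the leaf $L$ meets $X_m$ for $m \geq m_0$, and $L \cap X_m$ is a union of arcs each homotopic to some $\ell' \in A_m$; choosing the lift accordingly, there is a lift $\tilde L$ of $L$ crossing $\widetilde{X_m} \supseteq \widetilde{X_N}$ in an arc homotopic to a lift of $\ell'$. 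Because $\ell'$ traverses $\ell_N$, this crossing arc contains a sub-arc, namely $\tilde L \cap \widetilde{X_N}$, that is homotopic rel $\partial_0 \widetilde{X_N}$ to a lift of $\ell_N$, and hence joins a pair of boundary geodesics $B_N', C_N'$ of $\widetilde{X_N}$ forming a lift of the pair joined by $\ell_N$. Since $\widetilde{X_N}$ is convex, the geodesic $\tilde L$ meets it in exactly this one arc, so the two ends of $\tilde L$ lie in the complementary intervals $I_N', J_N'$ cut off by $B_N', C_N'$. Finally, as $\ell_N$ is a connected arc, the components of its preimage in the universal cover $\widetilde{X_N}$ form a single $\pi_1(X_N)$-orbit, so there is $h \in \pi_1(X_N) \subseteq \pi_1(X)$ carrying $(B_N', C_N')$ to $(B_N, C_N)$; then $h\tilde L$ is a lift of $L$ with endpoints in $h I_N' = I_N$ and $h J_N' = J_N$, proving the claim.

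To conclude, fix $\epsilon > 0$. For large $N$ the lift furnished by the claim has endpoints $a_N \in I_N$ and $b_N \in J_N$, and since $I_N \downarrow \{p\}$ and $J_N \downarrow \{q\}$ these geodesics converge to $[p,q] = \tilde M$ uniformly on compact sets; in particular the lift passes within $\epsilon$ of $\tilde z$ once $N$ is large enough. Projecting to $X$ shows $L$ passes within $\epsilon$ of $z$, so $z \in \overline L$; as $z \in \Lambda$ was arbitrary, $\overline L = \Lambda$ and $\Lambda$ is minimal. I expect the main obstacle to be the middle step: converting the combinatorial traversal condition at level $m$ into geometric control of a lift of $L$ near the specific point $z$. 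The two ingredients that make this work — convexity of $\widetilde{X_N}$, so that a geodesic crosses it in a single arc and each end is pinned into a complementary interval, and transitivity of $\pi_1(X_N)$ on the lifts of $\ell_N$, so that the generic crossing can be steered to the particular intervals $I_N, J_N$ surrounding $p$ and $q$ — are where the care is required.
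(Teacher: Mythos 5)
Your proposal is correct and takes essentially the same route as the paper: both fix the nested intervals $I_n\downarrow p$, $J_n\downarrow q$ defining a lift of one leaf, invoke the traversal hypothesis at a large level $m$ to find an arc of the other leaf's intersection with $X_N$ homotopic to the arc $\ell_N\in A_N$ joining $I_N$ to $J_N$, and conclude that some lift of that leaf has endpoints in $I_N$ and $J_N$. The one bookkeeping slip is that the sub-arc of your crossing arc homotopic to a lift of $\ell_N$ lies in some translate $g\widetilde{X_N}$ of the distinguished component rather than in $\widetilde{X_N}$ itself, so the steering element should be $hg^{-1}\in\pi_1(X)$ rather than an element of $\pi_1(X_N)$ --- the paper sidesteps this by choosing the correct lift of the arc of $M\cap X_N$ downstairs, but your deck-transformation step absorbs it with no essential change.
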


\begin{proof}
Let $L$ be a leaf of $\Lambda$. Let $\widetilde L=[p,q]$ be a lift of $L$ to $\widetilde X$. It suffices to show that for any other leaf $M$ of $\Lambda$, there is a lift of $M$ to $\widetilde X$ with endpoints on $\partial_\infty \widetilde X$ which are arbitrarily close to $p$ and $q$. By definition of $\Lambda$, there is $n_0\geq 0$ such that $p=\bigcap_{n=n_0}^\infty I_n$ and $q=\bigcap_{n=n_0}^\infty J_n$ where $I_n$ and $J_n$ are arcs of $\partial_\infty \widetilde X\setminus \partial_\infty \widetilde{X_n}$ which are joined by $A_n$ and we have $I_{n_0}\supset I_{n_0+1}\supset \ldots$ and similarly for the $J_n$. Since the $I_n$ nest down to $p$ and the $J_n$ nest down to $q$, it suffices to show that there is a lift of $M$ with endpoints in $I_n$ and $J_n$ for any $n\geq n_0$. Fix an $n\geq n_0$. Let $B_n$ and $C_n$ be the geodesics of $\partial_0 \widetilde{X_n}$ with the same endpoints as $I_n$ and $J_n$, respectively. Then there is an arc $\ell\in A_n$ and a lift $\widetilde \ell$ to $\widetilde X$ with endpoints on $B_n$ and $C_n$.

For any $m$ sufficiently large compared to $n$, each arc of $A_m$ traverses each arc of $A_n$. Choose any $m$ with this property and with the property that $M$ intersects $X_m$. Then $M\cap X_m$ contains an arc which is homotopic to some $\ell' \in A_m$. Therefore $M\cap X_n$ contains $\ell' \cap X_n$, which contains an arc homotopic to $\ell$. Lifting this arc of $M\cap X_n$ and extending it to a lift of $M$, we see that there is a lift $\widetilde M$ of $M$ which intersects $B_n$ and $C_n$ transversely, and therefore $\widetilde M$ has endpoints in $I_n$ and $J_n$. This completes the proof.
\end{proof}

\subsection{Cones of transverse measures for inverse limit laminations}

\label{sec:inverselimcones}

We note that if $\{A_n\}_{n=1}^\infty$ is a directed system of arcs and $\Lambda$ is the inverse limit lamination, then $\Lambda$ consists of \emph{exactly} the simple bi-infinite geodesics $L$ on $X$ for which each intersection $L\cap X_n$ consists of a (possibly empty) set of arcs all homotopic to arcs in $A_n$. As already used implicitly earlier, if $L$ is a leaf of $\Lambda$ then it satisfies this property. On the other hand if, for each $n$, $L\cap X_n$ consists of arcs homotopic to arcs in $A_n$, then we may lift $L$ to a geodesics $\widetilde L=[p,q]$. Then $\widetilde L$ intersects $\widetilde {X_{n_0}}$ for $n_0$ large enough and for each $n\geq n_0$, $\widetilde L\cap \widetilde{X_n}$ is an arc from a component $B_n$ of $\partial_\infty \widetilde{X} \setminus \partial_\infty \widetilde{X_n}$ to a component $C_n$ and $B_n$ is joined to $C_n$ by $A_n$ for each such $n$. Thus $L$ lies in $\Lambda$.

Finally, for each $\ell\in A_n$ there is a leaf $L$ of $\Lambda$ such that $L\cap X_n$ contains an arc homotopic to $\ell$. To see this, set $\ell_n=\ell$, and inductively for $i> n$, set $\ell_i \in A_i$ to be an arc such that $\ell_i\cap X_{i-1}$ contains $\ell_{i-1}$. Choose $I_n,J_n\subset \partial_\infty \widetilde{X} \setminus \partial_\infty \widetilde{X_n}$ to be intervals joined by $\ell_n$. Inductively, we may choose arcs $I_n\supset I_{n+1} \supset\ldots$ and $J_n\supset J_{n+1} \supset \ldots$ which are joined by $\ell_i$ for $i\geq n$. Setting $p=\bigcap_{i=n}^\infty I_n$ and $q=\bigcap_{i=n}^\infty J_n$ we have that $\widetilde L=[p,q]$ is a geodesic of $\widetilde \Lambda$ and its image $L$ in $X$ is a leaf of $\Lambda$ satisfying that $L\cap X_n$ contains $\ell$.

Using this discussion, we may read off the cone of transverse measures $\mathcal M(\Lambda)$. Denote the elements of $A_n$ by $\{\ell_i^n\}_{i=1}^{|A_n|}$. We've shown that $\Lambda\cap X_n$ consists exactly of the homotopy classes of arcs in $A_n$ for each $n\geq 1$. By the discussion in Section \ref{sec:transmaps}, we have:

\begin{lem}
\label{lem:inverselimcone}
Let $\{A_n\}_{n=1}^\infty$ be a directed system of arcs on $X_n$. Then the cone $\mathcal M(\Lambda)$ is linearly homeomorphic to the limit of the inverse system  \[ \R_+^{|A_1|} \xleftarrow{ \pi_1 } \R_+^{|A_2|} \xleftarrow{\pi_2 } \R_+^{|A_3|} \xleftarrow{\pi_3 } \ldots\] where $\pi_n$ is the $|A_n|\times |A_{n+1}|$ matrix whose $(i,j)$-entry counts the number of arcs of $\ell_j^{n+1}\cap X_n$ which are homotopic to $\ell_i^n$.
\end{lem}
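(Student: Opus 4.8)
The plan is to apply Theorem \ref{thm:limithomeo} (equivalently Theorem \ref{mainthm:inverselim}) directly, so that the entire task reduces to identifying the finite-dimensional cones $C_n = C(X_n)$ and the transition maps $\pi_n$ attached to the exhaustion $X_1 \subset X_2 \subset \cdots$ in the concrete terms of the directed system $\{A_n\}$. First I would record the key structural point: the inverse limit lamination $\Lambda$ has \emph{no} compact minimal sub-laminations, and hence no leaves spiraling onto such sub-laminations. Indeed, every leaf $L$ of $\Lambda$ lifts to a bi-infinite geodesic $[p,q]$ with $p \neq q$ in $\partial_\infty \widetilde X$ which, for all large $n$, crosses $\widetilde{X_n}$ in a single arc running between two components of $\partial_0 \widetilde{X_n}$; thus $L$ is a proper leaf that exits every compact subsurface and meets each $X_n$ only in proper arcs. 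A leaf of a compact minimal sub-lamination $\Gamma \subset \operatorname{int} X_n$ would instead be a closed geodesic or a bi-infinite geodesic contained in the interior of $X_n$, meeting $X_n$ in something that is not a union of arcs with endpoints on $\partial X_n$ homotopic to elements of $A_n$. This contradicts the characterization of $\Lambda$ recalled at the start of Section \ref{sec:inverselimcones}.

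Consequently, in the structure result for $\Lambda_n = \Lambda \cap X_n$ established in Section \ref{sec:inverselim}, the number $s(n)$ of compact minimal sub-laminations is zero, and $\Lambda_n$ is exactly the union of its parallel families of proper arcs. By the discussion immediately preceding the lemma, these families are indexed precisely by the homotopy classes of the (pairwise non-homotopic) arcs in $A_n$: every arc of $L \cap X_n$ is homotopic to some $\ell \in A_n$, and every $\ell \in A_n$ is realized by some leaf. Hence $r(n) = |A_n|$ and, since there are no $\mathcal M(\Gamma_i^n)$ factors, $C_n = \prod_{i=1}^{r(n)} \R_+ = \R_+^{|A_n|}$.

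It then remains to identify the transition maps. Since neither $\Lambda_n$ nor $\Lambda_{n+1}$ contains compact minimal sub-laminations, we are in the ``easiest case'' analyzed in Section \ref{sec:transmaps}: there $\pi_n$ is shown to be the $r(n) \times r(n+1)$ matrix $(a_{ij})$, where $a_{ij}$ is the number of components of $L \cap X_n$ homotopic to $\ell_i^n$ for any representative arc $L$ in the class $\ell_j^{n+1}$. Under directedness each such component is homotopic to an element of $A_n$, so the count is well-defined and finite, and this matrix is exactly the one described in the statement. Feeding $C_n = \R_+^{|A_n|}$ and these maps into Theorem \ref{thm:limithomeo} yields the claimed linear homeomorphism $\mathcal M(\Lambda) \cong \varprojlim(\R_+^{|A_n|}, \pi_n)$.

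I expect the only genuinely substantive point to be the first step---ruling out compact sub-laminations and spiraling leaves---together with checking that the entry $a_{ij}$ is independent of the chosen representative arc in $\ell_j^{n+1}$; the latter is already built into the well-definedness of the transition maps in Section \ref{sec:transmaps} (all arcs in a single parallel family are homotopic rel $\partial X_{n+1}$ and therefore traverse $X_n$ in the same combinatorial pattern), so once the absence of minimal sub-laminations is in hand the lemma follows formally from the general inverse-limit theorem.
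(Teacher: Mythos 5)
Your proposal is correct and follows essentially the same route as the paper: the text of Section \ref{sec:inverselimcones} preceding the lemma establishes that $\Lambda\cap X_n$ consists exactly of the homotopy classes of arcs in $A_n$ (so there are no compact minimal sub-laminations and $C_n=\R_+^{|A_n|}$ with $r(n)=|A_n|$), and then the lemma is read off from the description of the transition maps in Section \ref{sec:transmaps} together with Theorem \ref{thm:limithomeo}. Your only addition is to spell out explicitly why no leaf can lie in a compact minimal sub-lamination, which the paper leaves implicit in its characterization of the leaves.
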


\section{Realizing Choquet simplices}
\label{sec:realization}

In this section we prove Theorem \ref{mainthm:choquetrealization} from the introduction. To do this, we use several tools. First we have the following realization theorem of Lazar-Lindenstrauss:

\begin{thm}[{\cite[Corollary to Theorem 5.2]{lazar_lindenstrauss}}]
\label{thm:llinverselims}
Let $\Delta$ be a compact metrizable Choquet simplex. Then there exists a sequence of finite-dimensional simplices $\Delta_n$ together with surjective affine maps $f_n:\Delta_{n+1}\to \Delta_n$ such that $\Delta$ is affinely homeomorphic to the limit of the inverse system \[\Delta_1 \xleftarrow{f_1 } \Delta_2 \xleftarrow{f_2 } \Delta_3 \xleftarrow{f_3 }  \ldots.\]
\end{thm}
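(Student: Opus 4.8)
The plan is to dualize and prove the equivalent functional-analytic statement about the order-unit Banach space $A(\Delta)$ of continuous affine functions $\Delta \to \R$. A compact convex metrizable set $\Delta$ is a Choquet simplex precisely when $A(\Delta)$ has the Riesz interpolation property: whenever $f_1,f_2 \le g_1,g_2$ in $A(\Delta)$ there is $h\in A(\Delta)$ with $f_i \le h \le g_j$ for all $i,j$. Equivalently, the dual $A(\Delta)^*$, ordered by the cone of positive measures, is a vector lattice, so $A(\Delta)^* \cong L^1(\mu)$ for some measure $\mu$; this is the defining property of a Lindenstrauss space. Metrizability of $\Delta$ is equivalent to separability of $A(\Delta)$. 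Under this dictionary, a finite-dimensional simplex $\Delta_n$ corresponds to the space $A(\Delta_n)\cong \ell^\infty_{k_n}$ (the sup-normed $k_n$-dimensional coordinate space, with order unit the constant function $\mathbf 1$), a surjective affine map $f_n\colon \Delta_{n+1}\to \Delta_n$ corresponds dually to an isometric, order-preserving, unital linear embedding $f_n^*\colon \ell^\infty_{k_n}\hookrightarrow \ell^\infty_{k_{n+1}}$, and the assertion $\Delta\cong\varprojlim \Delta_n$ becomes the assertion that $A(\Delta)$ is the closed union of an increasing chain of such $\ell^\infty$-subspaces.

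So the core task is this: given a separable order-unit space $A=A(\Delta)$ with order unit $\mathbf 1$ and the Riesz interpolation property, produce an increasing sequence of finite-dimensional subspaces $E_1\subseteq E_2\subseteq\cdots$, each containing $\mathbf 1$ and each linearly order-isometric to some $\ell^\infty_{k_n}$, with $\overline{\bigcup_n E_n}=A$. First I would fix a countable dense sequence $(a_j)_{j\ge 1}$ in $A$, and build the $E_n$ inductively so that $E_n$ contains each of $a_1,\dots,a_n$ to within error $2^{-n}$, which forces density of the union. The heart of the matter is a perturbation lemma: any finite-dimensional subspace $F\ni\mathbf 1$ of $A$ lies, up to arbitrarily small perturbation, inside a finite-dimensional subspace that is \emph{exactly} order-isometric to some $\ell^\infty_k$. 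This is where interpolation is indispensable --- it is precisely what lets one realize enough suprema and infima of a prescribed finite family within a slightly larger finite-dimensional subspace for that subspace's unit ball to become a cube, i.e.\ for the subspace to be an $\ell^\infty$-space.

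Granting the chain $E_1\subseteq E_2\subseteq\cdots$ with dense union, I would dualize. Writing $\Delta_n$ for the state space of $E_n\cong\ell^\infty_{k_n}$ (its positive unital functionals), each $\Delta_n$ is a finite-dimensional simplex, and the inclusion $E_n\hookrightarrow E_{n+1}$ induces by restriction an affine map $f_n\colon\Delta_{n+1}\to\Delta_n$. Surjectivity of $f_n$ holds because the inclusion is unital and positive, so every state of $E_n$ extends to a state of $E_{n+1}$ by the extension theorem for states on order-unit spaces. Finally the natural map $\Delta\to\varprojlim\Delta_n$, $\phi\mapsto(\phi|_{E_n})_n$, is an affine homeomorphism: it is continuous and injective because $\bigcup_n E_n$ is dense in $A=A(\Delta)$ and hence separates states, and it is surjective and open by a compactness argument for inverse limits of compact spaces, entirely analogous to the proof of Theorem~\ref{thm:limithomeo}.

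The main obstacle is the perturbation lemma: upgrading an \emph{approximately} $\ell^\infty_k$-isometric subspace to an \emph{exactly} isometric one. Extracting approximately-$\ell^\infty$ subspaces from the interpolation property is fairly direct, but the passage from ``approximate'' to ``exact'' requires the stability theory of $\ell^\infty_k$-structures developed by Lazar--Lindenstrauss (the ``representing matrices'' machinery), which is exactly why the cited statement appears as a corollary of a more general representation theorem. Once this structural approximation is in hand, the dualization, the surjectivity of the bonding maps, and the identification of the inverse limit are all formal.
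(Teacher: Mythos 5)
The paper offers no proof of this statement: it is imported verbatim as the Corollary to Theorem 5.2 of Lazar--Lindenstrauss, so there is no internal argument to compare yours against. Your dualization dictionary is accurate and is indeed how the cited result is proved: metrizable Choquet simplices correspond to separable order-unit spaces $A(\Delta)$ with Riesz interpolation (equivalently, simplex spaces, whose duals are $L^1$-spaces); finite-dimensional simplices correspond to $\ell^\infty_{k}$ with the constant function as order unit; surjective affine bonding maps correspond to unital positive isometric embeddings (with surjectivity of the induced state map coming from state extension); and $\Delta\cong\varprojlim\Delta_n$ translates to $A(\Delta)=\overline{\bigcup_n E_n}$ for an increasing chain of such subspaces. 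The density bookkeeping and the final compactness argument identifying $\Delta$ with the inverse limit are also fine.

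The genuine gap is exactly the step you label ``the main obstacle,'' and it is not a technical detail to be deferred --- it is the entire mathematical content of the theorem being cited. Given a finite-dimensional subspace $F\ni\mathbf 1$ of $A(\Delta)$, you need a finite-dimensional subspace $E\supseteq$ (a small perturbation of) $F$, still containing $\mathbf 1$, that is \emph{exactly} order-isometric to some $\ell^\infty_k$; and at the next stage the new cube must exactly contain the previous one, since the bonding maps $f_n$ are defined by restriction of states and an only-approximate inclusion would destroy the inverse system (one would then need a separate approximate-intertwining argument, e.g.\ in the style of Theorem \ref{thm:brownapprox}, to repair it). In a general Banach space an almost-isometric copy of $\ell^\infty_k$ need not be close to an exact one, and the Riesz interpolation property by itself only hands you approximate suprema/infima, hence approximate cubes; the passage to exact nested cubes is precisely what the Lazar--Lindenstrauss representing-matrices machinery (their Theorem 5.2) accomplishes, using the $L^1$-predual structure in an essential way. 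So what you have written is a correct and well-organized reduction of the statement to the very reference it cites, not an independent proof. That is a perfectly reasonable thing to produce here --- the authors themselves treat the result as a black box --- but it should be presented as such rather than as a proof with one lemma ``left to stability theory.''
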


Our other main tool is an approximation theorem of Brown (\cite{brown}). First we set the notation. If $f:X\to Y$ and $g:X\to Y$ are maps between compact metric spaces then $D(f,g)$ denotes the supremum distance $D(f,g)=\sup\{d(f(x),g(x)) : x\in X\}$. Suppose that \[X_1 \xleftarrow{f_1 } X_2 \xleftarrow{f_2 } X_3 \xleftarrow{f_3 }  \ldots\] is an inverse system of topological spaces and $X$ is the inverse limit. Recall that for $i\leq j$, $f_{ij}:X_j\to X_i$ denotes the composition $f_i\circ f_{i+1} \circ \cdots \circ f_{j-1}$. We denote by $f_{i\infty}:X\to X_i$ the natural projections, which satisfy $f_{ij}\circ f_{j\infty}=f_{i\infty}$ for each $i\leq j$.

\begin{thm}[{\cite[Theorem 2]{brown}}]
\label{thm:brownapprox}
Let $\{X_i\}_{i=1}^\infty$ be a sequence of compact metric spaces and let $f_i:X_{i+1}\to X_i$ and $g_i:X_{i+1}\to X_i$ be maps. Let $X,Y$ be the inverse limits of $(X_i,f_i)$ and $(X_i,g_i)$, respectively. Then for each $i$ there is a constant $L(g_1,\ldots,g_{i-1})>0$ depending only on $g_1,\ldots,g_{i-1}$ such that if \[D(f_i,g_i)<  L(g_1,\ldots,g_{i-1})\tag{\textasteriskcentered} \label{eq:brown}\] for each $i$ then the following properties are satisfied. For each $i$, the function $F_i:X\to X_i$ defined by $F_i=\lim_{j\to \infty} g_{ij}\circ f_{j \infty}$ is well-defined and continuous. Moreover, the function $F:X\to Y$ defined by $F(s)=(F_1(s),F_2(s),\ldots)$ is a homeomorphism.
\end{thm}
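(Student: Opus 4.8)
The plan is to construct the map $F$ by hand, check that it is a well-defined continuous map into $Y$, and then produce a symmetric candidate inverse $G\colon Y\to X$ and verify that $F$ and $G$ cancel. The real content is the convergence of the defining limits, and the choice of the constants $L$ is dictated precisely by that convergence.

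First I would show that each limit $F_i=\lim_{j\to\infty} g_{ij}\circ f_{j\infty}$ exists uniformly. The mechanism is a telescoping estimate. For $s=(s_1,s_2,\dots)\in X$ we have $f_{j\infty}(s)=s_j$ with $f_j(s_{j+1})=s_j$, so, using $g_{i,j+1}=g_{ij}\circ g_j$, consecutive terms satisfy
\[
d_{X_i}\big(g_{ij}(f_{j\infty}(s)),\,g_{i,j+1}(f_{j+1,\infty}(s))\big)=d_{X_i}\big(g_{ij}(f_j(s_{j+1})),\,g_{ij}(g_j(s_{j+1}))\big)\le \omega_{g_{ij}}\big(D(f_j,g_j)\big),
\]
where $\omega_{g_{ij}}$ is a modulus of continuity for the composite $g_{ij}=g_i\circ\cdots\circ g_{j-1}$ (each $g_{ij}$ is uniformly continuous, being continuous on a compact space). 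Since $i\le j$, this modulus depends only on $g_1,\dots,g_{j-1}$, which is exactly why the tolerance $L(g_1,\dots,g_{j-1})$ is allowed to depend on the \emph{earlier} maps: I would choose it so small that $\omega_{g_{ij}}(\delta)<2^{-j}$ for every $i\le j$ whenever $\delta<L(g_1,\dots,g_{j-1})$, which is possible because only finitely many $i$ are constrained at each stage. Then for fixed $i$ the series $\sum_{j\ge i}\omega_{g_{ij}}(D(f_j,g_j))$ is dominated by $\sum_{j\ge i}2^{-j}$, so $g_{ij}\circ f_{j\infty}$ is uniformly Cauchy and $F_i$ is a continuous uniform limit. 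The same estimate yields the useful bound $D(F_i,f_{i\infty})\le \delta_i$ with $\delta_i=2^{-(i-1)}\to 0$, since the $j=i$ term of the sequence is exactly $f_{i\infty}$.

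Next I would check that $F=(F_1,F_2,\dots)$ maps into $Y$ and is continuous. Continuity is automatic once each coordinate $F_i$ is continuous, because $Y$ carries the inverse-limit (product) topology. To see $F(s)\in Y$ I would verify the compatibility $g_i\circ F_{i+1}=F_i$ by pulling the continuous map $g_i$ through the limit:
\[
g_i\circ F_{i+1}=\lim_{j\to\infty} g_i\circ g_{i+1,j}\circ f_{j\infty}=\lim_{j\to\infty} g_{ij}\circ f_{j\infty}=F_i.
\]

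Finally, to prove $F$ is a homeomorphism I would construct the candidate inverse $G\colon Y\to X$ by the symmetric formula $G_i=\lim_{j\to\infty} f_{ij}\circ g_{j\infty}$ and show $F\circ G=\operatorname{id}_Y$ and $G\circ F=\operatorname{id}_X$ by the same telescoping computation, each composition collapsing in the limit because the inserted maps cancel. Since $X$ is compact and $Y$ is Hausdorff, it would in fact be enough to prove $F$ is a continuous bijection, and injectivity is already visible: the bound above gives $d_{X_n}(F_n(s),F_n(s'))\ge d_{X_n}(s_n,s'_n)-2\delta_n$, which separates $s\ne s'$ at an appropriate level $n$. \textbf{The main obstacle is surjectivity}, equivalently the convergence of the inverse system defining $G$: the analogous telescoping bound for $G$ involves the moduli $\omega_{f_{ij}}$ of the $f$-composites, which the hypothesis does not control directly (the tolerances were tuned to the $g$'s). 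This is exactly where Brown's recursive choice of the constants $L$ must do its work—arranging that the prescribed tolerances simultaneously tame both families of composites so that the two limit maps exist and are mutually inverse—and it is the delicate heart of the argument; everything else is the bookkeeping above.
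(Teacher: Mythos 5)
The paper does not prove this statement: it is quoted from Brown's 1960 paper and used as a black box, so there is no internal proof to compare against. Judged on its own merits, your proposal correctly carries out the first half of the theorem. The telescoping estimate, the choice of $L(g_1,\ldots,g_{j-1})$ as a common modulus-of-continuity threshold for the finitely many composites $g_{ij}$ with $i\le j$ (which is exactly why the constant may depend only on the earlier $g$'s), the resulting uniform convergence of $g_{ij}\circ f_{j\infty}$, the bound $D(F_i,f_{i\infty})\le\delta_i$, and the compatibility $g_i\circ F_{i+1}=F_i$ are all correct, and together they establish that each $F_i$ is well-defined and continuous and that $F$ is a continuous map from $X$ into $Y$.

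The gap is the claim that $F$ is a homeomorphism, which is the part of the theorem the paper actually uses (in the proof of Proposition~\ref{prop:oddmatrixlim}). Your injectivity argument does not close: from $D(F_n,f_{n\infty})\le\delta_n$ you get $d_{X_n}(F_n(s),F_n(s'))\ge d_{X_n}(s_n,s'_n)-2\delta_n$, but nothing guarantees that $d_{X_n}(s_n,s'_n)>2\delta_n$ at \emph{some} level $n$. Both quantities tend to $0$, and since the hypothesis places no control on the moduli of continuity of the composites $f_{n_0n}$, one can have $s\ne s'$ with $d_{X_n}(s_n,s'_n)$ decaying faster than $2^{-n}$ for every $n$ (take strongly expanding bonding maps $f_n$); the estimate then separates nothing. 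Surjectivity you explicitly leave open, observing correctly that the symmetric formula $G_i=\lim_j f_{ij}\circ g_{j\infty}$ would require taming the $f$-composites, which the hypothesis $D(f_i,g_i)<L(g_1,\ldots,g_{i-1})$ does not do. So the proposal proves only existence and continuity of $F$; the bijectivity --- the recursive interplay in the choice of the constants that you yourself flag as the ``delicate heart of the argument'' --- is missing, and the sketched route to injectivity would fail as written.
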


Our strategy to prove Theorem \ref{mainthm:choquetrealization} will be to choose a Choquet simplex and an inverse system of finite-dimensional simplices $\Delta_n$ with that Choquet simplex as an inverse limit, as given by Theorem \ref{thm:llinverselims}. We interpret a simplex $\Delta_n$ as the base of a cone of weights of a system of $\dim(\Delta_n)+1$ arcs on a punctured disk. We may do the same for $\Delta_{n+1}$. The map $f_n:\Delta_{n+1}\to \Delta_n$ may not be realized by including the surface with arcs realizing $\Delta_n$ into the surface with arcs realizing $\Delta_{n+1}$. So we perturb $f_n$ by a small amount to be realized by an inclusion of punctured disks. We then take advantage of Theorem \ref{thm:brownapprox}.

The following lemma is the essential part of the inductive step of the proof. For the statement, we define a \emph{punctured disk} to be a closed disk minus at most finitely many interior points. If $U$ is a punctured disk and $A$ is a collection of pairwise disjoint arcs on $U$ with endpoints on $\partial U$, then we may consider the dual graph $T$ to $A$. Thus, $T$ has one vertex for each component of $U\setminus A$ and two vertices are joined by an edge if the corresponding components are separated by a single arc of $A$. One may check that $T$ is a tree.

\begin{lem}
\label{lem:oddmatrixrealization}
Let $U$ be a punctured disk. Let $A$ be a finite collection of $r$ disjoint, homotopically distinct, homotopically non-trivial arcs on $U$ such that the dual tree to $A$ is homeomorphic to the interval $[0,1]$. Let $\R_+^s$ be a cone with $s>0$ and $\pi:\R_+^s\to \R_+^r$ a linear map whose matrix representative (with respect to the standard bases) has entries which are all positive odd integers. Then $U$ is contained in a punctured disk $V$, together with a finite collection $B$ of $s$ disjoint, homotopically distinct, homotopically non-trivial arcs such that
\begin{itemize}
\item the arcs of $B\cap U$ are homotopic to the arcs in $A$;
\item the dual tree to $B$ is homeomorphic to an interval; and
\item the induced map on cones of weights $C(V)\to C(U)$ for $A$ and $B$ is given by $\pi$.
\end{itemize}
\end{lem}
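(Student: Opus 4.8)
The plan is to give an explicit planar \emph{weaving} construction, realizing the columns of the matrix $M=(m_{ij})$ one arc at a time. First I would fix a convenient combinatorial model for the pair $(U,A)$. Since the dual tree $T$ to $A$ is an interval, the $r$ arcs are, up to homeomorphism of $U$, a nested family of essential chords $a_1,\dots,a_r$ cutting $U$ into regions $R_0,\dots,R_r$ arranged in a path, with one puncture separating each consecutive pair (this is precisely what forces the $a_i$ to be essential and pairwise non-homotopic). In this model a properly embedded sub-arc of $U$ is homotopic to $a_i$ exactly when it separates the same consecutive block of punctures, so the homotopy class of a component of $b\cap U$ is recorded by which punctures it cuts off. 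By the description of the transition maps in Section \ref{sec:transmaps}, the induced map $C(V)\to C(U)$ has $(i,j)$-entry equal to the number of components of $b_j\cap U$ homotopic to $a_i$; it therefore suffices to produce $V\supset U$ and disjoint arcs $b_1,\dots,b_s$ with $b_j\cap U$ having exactly $m_{ij}$ components parallel to $a_i$, and whose dual tree is again an interval.

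Next I would build a single arc $b_j$ by a zigzag. Enlarge $U$ by attaching a band along the boundary arc carrying the feet of the $a_i$, so that $V$ remains a disk and $V\setminus U$ is a band in which strands may be re-routed. Near each $a_i$ sits a \emph{channel}; into channel $i$ I place a block of $m_{ij}$ mutually parallel copies of $a_i$, joined in series by short bridges in the band so that the block is a single embedded arc. Because $m_{ij}$ is odd, $b_j$ emerges from this block on the opposite side from which it entered; I then bridge from the exit of the channel-$i$ block to the entrance of the channel-$(i+1)$ block, alternately routing over the top and under the bottom of $U$, and finally send the two loose ends out to $\partial V$. The parity makes this alternation consistent, so $b_j$ is embedded and $b_j\cap U$ consists of exactly $\sum_i m_{ij}$ sub-arcs, $m_{ij}$ of them parallel to $a_i$.

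I would then carry this out for all $j$ simultaneously, nesting the arcs: the blocks belonging to $b_{j+1}$ are placed just outside those of $b_j$ in every channel, and the bridges of $b_{j+1}$ run parallel to and outside those of $b_j$ in the band. This keeps the $b_j$ pairwise disjoint and linearly ordered; inserting one puncture of $V$ between each consecutive pair and one inside the innermost arc makes them essential and pairwise non-homotopic and makes the dual tree of $B$ a path. After capping off any inessential boundary components with disks and once-punctured disks and straightening $\partial V$, as in the earlier constructions, $V$ is a punctured disk containing $U$. Since $b_j\cap U$ meets the prescribed homotopy classes with the prescribed multiplicities, the counting rule of Section \ref{sec:transmaps} shows that the induced map $C(V)\to C(U)$ is exactly $\pi$.

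The main obstacle is the \emph{simultaneous} fulfilment of the three requirements: exact multiplicities $m_{ij}$, pairwise disjointness, and an interval dual tree for $B$. This is exactly where oddness is used. Equivalently, one may encode the inclusion by a planar piecewise-linear map of the dual intervals $[0,s]\to[0,r]$ whose $j$-th edge covers the $i$-th edge $m_{ij}$ times, and oddness of every $m_{ij}$ guarantees each such covering has odd, hence nonzero, net degree, i.e.\ that every block reverses sides. It is this side-reversal that lets the serial bridges and the nesting be chosen without forcing a self-crossing of $b_j$, a crossing between distinct $b_j$, or a valence-three vertex in the dual tree of $B$. Verifying that the nested routing can always be realized---so that no two sub-arcs of the whole family cross and the boundary order of the $2s$ endpoints on $\partial V$ realizes the linear nesting---is the technical heart of the argument; everything else is bookkeeping with the counting rule of Section \ref{sec:transmaps}.
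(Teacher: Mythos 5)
Your overall strategy is the same as the paper's: realize each column of the matrix by a single arc built from blocks of parallel copies of the $a_i$ joined in series by bridges in an annular collar, with oddness of the entries guaranteeing that each block reverses sides so the zigzag closes up consistently. You also correctly isolate where the difficulty lies. The problem is that the part you defer as ``the technical heart'' is precisely the content of the lemma, and the one concrete routing rule you do propose --- placing the blocks of $b_{j+1}$ ``just outside those of $b_j$ in every channel,'' with bridges nested uniformly --- is not the convention that works. If the $s$ blocks appear in the same relative order in every channel, then the exit points of the $b_j$ in channel $i$ and their entry points in channel $i+1$ occur in opposite orders along the two sides of the region between $a_i$ and $a_{i+1}$, so the connecting bridges form a crossing pattern rather than a nested one. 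What is actually needed is for the interleaving order of the blocks to \emph{reverse} from each channel to the next.

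The paper pins this down by subdividing the $i$-th edge of the dual interval into $\sum_j a_{ij}$ sub-edges and labeling the resulting vertices by a zigzag sequence that runs $1,2,\dots,s+1$ across the first channel, back down $s+1,\dots,2,1$ across the second, and so on; the arc $\ell_j^V$ is then declared to cross exactly the sub-edges with endpoint labels $\{j,j+1\}$. Oddness of each $a_{ij}$ is what makes this labeling consistent (each block of $a_{ij}$ sub-edges advances the label by one), it automatically produces the required alternating block order, and it yields the disjointness inductively via the parity observation that between any two vertices labeled $j$ there are an even number of sub-edges labeled $\{j+1,j+2\}$, so that $\ell_{j+1}^V$ can be threaded entirely on one side of $\ell_j^V$. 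It also gives the interval dual tree for $B$, since $\ell_j^V$ ends up separating $\ell_{j-1}^V$ from $\ell_{j+1}^V$. Without this (or an equivalent) explicit combinatorial scheme, your construction does not yet establish that the $b_j$ can be drawn pairwise disjointly with the prescribed multiplicities and a path dual tree, so as written the proof is incomplete at its decisive step.
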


\begin{proof}
Set $\pi=(a_{ij})_{i=1,j=1}^{r,s}$. Let $T$ be the dual tree to $A$. Embed $T$ in $U$ in such a way that a vertex of $T$ lies in the region of $U\setminus A$ that it represents and an edge of $T$ intersects $A$ in exactly one point, which lies on the arc that it represents. Choose an ordering of the vertices $v_0,v_1,\ldots,v_r$ of $T$ such that the edges $e_i$ of $T$ join $v_{i-1}$ to $v_i$ for each $i$. Denote by $\ell_1^U,\ldots,\ell_r^U$ the arcs in $A$ corresponding to $e_1,\ldots,e_r$. Now, sub-divide each edge $e_i$ into $\sum_{j=1}^s a_{ij}$ edges for each $1\leq i\leq r$.

Having sub-divided the edges of $T$, we label the new vertices of $T$ as follows. We start from the first vertex $v_0$ of $e_1$ and traverse the new vertices in order until we reach $v_1$. We label $v_0$ with a 1. We label the next vertex of $e_1$ with a 2. We then continue alternating between labels of 1 and 2 until we have crossed $a_{11}$ (sub-)edges of $e_1$. Since $a_{11}$ is odd, the final label of a vertex of $e_1$ that we put down will be a 2. We then consider the next $a_{12}$ edges of $e_1$, starting with the last vertex labeled 2. We label them alternately 2 and 3 until we have crossed $a_{12}$ edges. Since we started with a 2 and $a_{12}$ is odd, our last label will be a 3. Continue in this way, alternating between 3 and 4, with the next $a_{13}$ edges of $e_1$ and so on, until we have traversed all $\sum_{j=1}^s a_{1j}$ edges of $e_1$. The last $a_{1s}$ edges of $e_1$ will have vertices labeled by $s$ and $s+1$ and the final vertex $v_1$ of $e_1$ will be labeled by $s+1$. Now we will label the vertices of the first $a_{2s}$ edges of $e_2$. The first vertex $v_1$ of $e_2$ has already been labeled $s+1$. We label the next $a_{2s}$ vertices alternately by $s$ and $s+1$ and therefore the final vertex labeled in this way will be labeled by $s$. We then label the vertices of the next $a_{2(s-1)}$ edges of $e_2$ alternately $s$ and $s-1$, the vertices of the next $a_{2(s-2)}$ edges of $e_2$ alternately $s-1$ and $s-2$, and so on. The final $a_{21}$ edges of $e_2$ will be labeled alternately 2 and 1 and the final vertex $v_2$ of $e_2$ will be labeled 1. We then repeat the process, labeling the first $a_{31}$ edges of $e_3$ alternately 1 and 2 and so on, and continue until we have labeled all the vertices of $T$.  See Figure \ref{fig:choquetinterval} for an example.

\begin{figure}[h]

\centering

\begin{subfigure}[t]{\textwidth}

\centering
\def\svgwidth{0.6\textwidth}
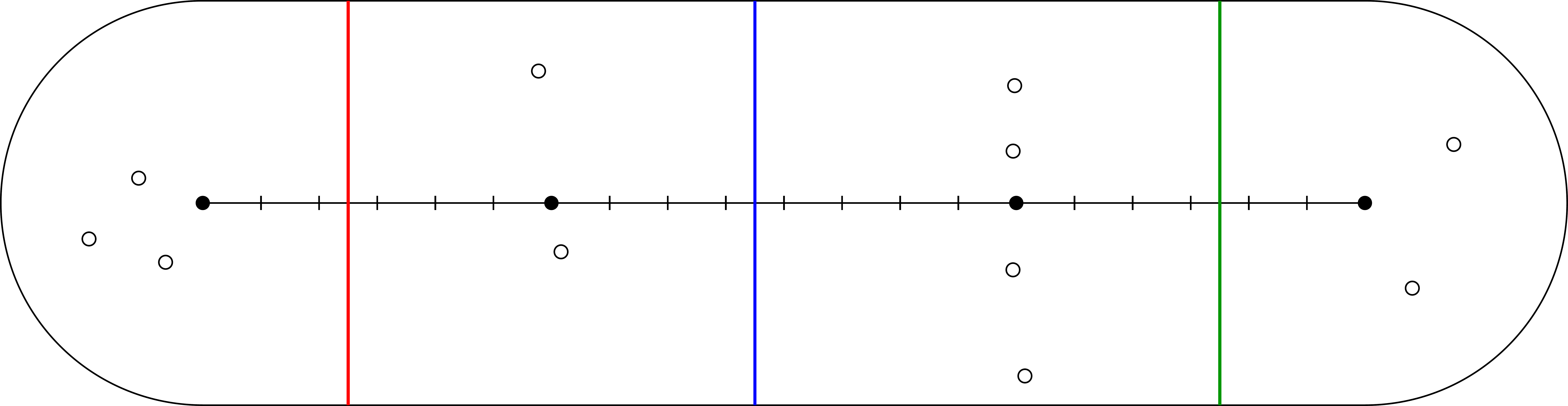

\caption{Step 1 of constructing the arcs $B$ from the arcs $A$ and the matrix $\pi$. The punctured disk $U$ contains three homotopy classes of arcs drawn from left to right. The dual tree is embedded in $U$ and the edges of $T$ are sub-divided into 6, 8, and 6 edges, respectively.}
\label{fig:choquetinterval}
\end{subfigure} 

\begin{subfigure}[t]{\textwidth}

\centering
\def\svgwidth{0.6\textwidth}
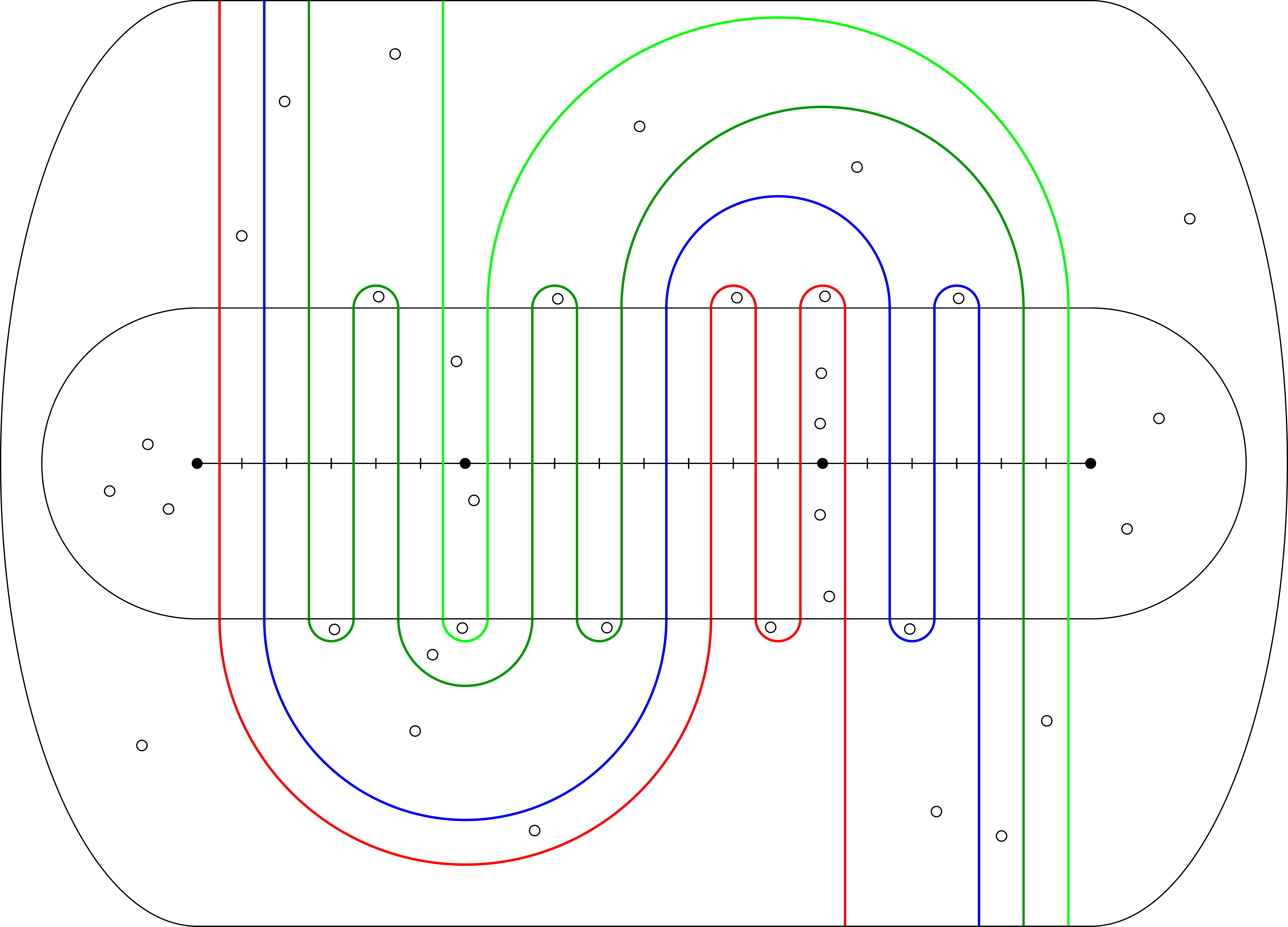

\caption{Step 2 of constructing the arcs $B$. The punctured disk $V$ is obtained by adding an annulus to $U$. For each $1\leq i\leq s$, one arc is drawn through all the edges of $T$ with endpoints labeled by $i$ and $i+1$. There are four homotopy classes in $B$ in this case.}
\label{fig:choquetinduction}
\end{subfigure}

\caption{Constructing a punctured disk $V$ and a system of arcs $B$ from the punctured disk $U$ together with the arcs $A$ and the transition matrix $\protect\begin{pmatrix} 1 & 1 & 3 & 1 \\ 3 & 1 & 3 & 1 \\ 1 & 3 & 1 & 1 \protect\end{pmatrix}$.}
\end{figure}

Now, embed $U$ into a larger disk $V$ by gluing on a closed annulus to the boundary component of $U$. We construct a set $B$ of arcs on $V$ with the desired properties. To do this, we construct for each $1\leq j\leq s$ an arc $\ell_j^V$ on $V$. The arc $\ell_j^V$ will intersect $U$ in $\sum_{i=1}^r a_{ij}$ arcs. The first time $\ell_j^V$ traverses $U$ it will cross through the first edge of $T$ (closest to $v_0$) with endpoints labeled $j$ and $j+1$. The next time it traverses $U$ it will cross through the second edge of $T$ (second closest to $v_0$) with endpoints labeled $j$ and $j+1$, and moreover do so in the \emph{opposite direction} to the first traversal. We continue this process inductively from the lowest edge with endpoints $j$ and $j+1$ to the highest, switching directions through $U$ each time. Each time $\ell_j^V$ traverses $U$ it crosses an edge contained in $e_i$ for some $i$ and when it does so we require the arc of intersection of $\ell_j^V$ with $U$ to be homotopic to $\ell_i^U$. See Figure \ref{fig:choquetinduction} for an example. We claim that one may use this recipe for each $1\leq j\leq s$ to construct the arcs $\ell_j^V$ in such a way that they are \emph{pairwise disjoint}.

To see this last claim, construct the arc $\ell_1^V$ as described. The intersection $\ell_1^V\cap U$ separates the vertices labeled 1 from the vertices labeled $2,\ldots,s+1$ in $U$. Moreover, between any two vertices labeled 1 there are an \emph{even} number of edges with vertices labeled 2 and 3. This ensures that $\ell_2^V$ may be constructed, disjoint from $\ell_1^V$, using the same recipe. The intersection $\ell_2^V\cap U$ separates the vertices labeled 1 and 2 from the vertices labeled $3,\ldots,s+1$ and between any two vertices labeled 2 there are an \emph{even} number of edges with vertices labeled 3 and 4. This ensures that $\ell_3^V$  may be constructed disjoint from $\ell_1^V$ and $\ell_2^V$. Inductively, we construct $\ell_4^V,\ldots, \ell_s^V$. We denote by $B$ the union of $\ell_1^V,\ldots,\ell_s^V$. Then for each $j$, $\ell_j^V$ intersects $U$ in $a_{ij}$ arcs homotopic to $\ell_i^U$, for each $i$. By construction, $\ell_i^V$ separates $\ell_{i-1}^V$ from $\ell_{i+1}^V$. Thus, the dual tree to $B$ is an interval. Finally, we need to ensure that the $\ell_j^V$ are homotopically non-trivial and pairwise non-homotopic. To do this, we add one puncture to each component of $(V\setminus U)\setminus B$. This completes the proof.
\end{proof}

We will use one other easy technical lemma to prove Theorem \ref{mainthm:choquetrealization}. If $\Delta$ is a finite-dimensional simplex then we may endow it with the $\ell^\infty$-distance $d_\infty$ defined as follows. Denoting by $v_1,\ldots,v_k$ the vertices of $\Delta$, we may represent each point $p\in \Delta$ uniquely as a convex combination $p=\sum_{i=1}^k a_iv_i$ where $0\leq a_i\leq 1$ for each $i$ and $\sum a_i=1$. The numbers $a_1,\ldots,a_k$ are the \emph{barycentric coordinates} of the point $p$. If $p=\sum_{i=1}^k a_i v_i$ and $q=\sum_{i=1}^k b_i v_i$ are points of $\Delta$, then their $\ell^\infty$-distance is \[d_\infty(p,q)=\sup \{ |a_i-b_i| : 1\leq i\leq k\}.\] As in Theorem \ref{thm:brownapprox}, $D(\cdot,\cdot)$ denotes the $L^\infty$-distance between maps with the same domain and codomain. The proof of the following lemma is standard and left to the reader.

\begin{lem}
\label{lem:linfinitydist}
Let $\Delta_1$ and $\Delta_2$ be finite-dimensional simplices endowed with their $\ell^\infty$ metrics. Let $F,G:\Delta_1\to \Delta_2$ be affine maps. Suppose that $d_\infty(F(v),G(v))\leq \epsilon$ for all vertices $v$ of $\Delta_1$. Then $D(F,G) \leq \epsilon$.
\end{lem}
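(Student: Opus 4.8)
The plan is to reduce the statement to a one-line barycentric-coordinate estimate, the only real input being that affine maps preserve finite convex combinations. First I would fix the vertices $v_1,\ldots,v_k$ of $\Delta_1$ and $w_1,\ldots,w_m$ of $\Delta_2$, and write an arbitrary point $p\in\Delta_1$ in barycentric coordinates as $p=\sum_{i=1}^k a_i v_i$ with $a_i\geq 0$ and $\sum_i a_i=1$. The quantity to control is $d_\infty(F(p),G(p))$, and $D(F,G)$ is by definition the supremum of this over all $p\in\Delta_1$, so it suffices to bound $d_\infty(F(p),G(p))$ by $\epsilon$ for each fixed $p$.

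The key preliminary observation is that an affine map in the sense of the paper's Convention (so $f(rv+sw)=rf(v)+sf(w)$ whenever $r+s=1$ and $r,s\in\R_+$) automatically preserves all finite convex combinations, i.e. $F\bigl(\sum_i a_i v_i\bigr)=\sum_i a_i F(v_i)$ whenever $a_i\geq 0$ and $\sum_i a_i=1$. This follows by a routine induction on $k$: one groups $\sum_{i=1}^k a_i v_i$ as the two-term convex combination of $v_1$ and the renormalized combination $\frac{1}{1-a_1}\sum_{i\geq 2} a_i v_i$ (the degenerate cases $a_1\in\{0,1\}$ being trivial), applies the defining two-point property, and invokes the inductive hypothesis on the remaining $k-1$ points. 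The same holds for $G$. This is the one place the affineness hypothesis is genuinely used.

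With this in hand I would write $F(v_i)=\sum_j c_{ij}w_j$ and $G(v_i)=\sum_j d_{ij}w_j$ in barycentric coordinates on $\Delta_2$, so that the hypothesis $d_\infty(F(v_i),G(v_i))\leq\epsilon$ says exactly $|c_{ij}-d_{ij}|\leq\epsilon$ for all $i,j$. By the previous paragraph the $j$-th barycentric coordinate of $F(p)$ is $\sum_i a_i c_{ij}$ and that of $G(p)$ is $\sum_i a_i d_{ij}$, whence
\[
\Bigl|\sum_i a_i c_{ij}-\sum_i a_i d_{ij}\Bigr|\leq \sum_i a_i\,|c_{ij}-d_{ij}|\leq \epsilon\sum_i a_i=\epsilon .
\]
Taking the supremum over $j$ gives $d_\infty(F(p),G(p))\leq\epsilon$, and taking the supremum over $p\in\Delta_1$ gives $D(F,G)\leq\epsilon$, as desired. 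There is no serious obstacle here; the computation is entirely the triangle inequality together with the convexity bound $\bigl|\sum_i a_i t_i\bigr|\leq\max_i|t_i|$ valid for a convex weighting, and the only conceptual point is the induction showing affine maps commute with convex combinations.
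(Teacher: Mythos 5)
Your proof is correct; the paper explicitly omits the argument ("standard and left to the reader"), and your barycentric-coordinate computation, together with the routine induction showing affine maps preserve finite convex combinations, is exactly the intended standard proof.
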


Let $f:\Delta\to\Delta'$ be an affine map from a $(q-1)$-simplex $\Delta$ to a
$(p-1)$-simplex $\Delta'$.
Choosing an ordering $v_1,\ldots,v_q$ of the vertices of $\Delta$ and an ordering $w_1,\ldots,w_p$ of the vertices of $\Delta'$ gives a representation of $f$ by a $p\times q$ matrix $M$.
Namely, the $i^{\text{th}}$ column of $M$ gives the barycentric coordinates of $f(v_i)$ with respect to $w_1,\ldots,w_p$.
In particular,
all entries are non-negative and all column sums are 1.
We will consider matrices representing affine maps which have nice properties as described below:

\begin{lem}\label{matrices}
  Let $M$ be a $p\times q$ matrix with non-negative entries, all of whose
  column sums are equal to one. For any $\epsilon>0$, there is a $p\times q$
  matrix $M'$ such that
  \begin{itemize}
  \item all entries of $M'$ are positive and all of its column sums are one;
    \item all entries of $M'$ differ from the corresponding entries of
      $M$ by $<\epsilon$; and
      \item there is a scalar multiple of $M'$ which is a matrix with odd integer entries.
        \end{itemize}
  \end{lem}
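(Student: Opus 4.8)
The plan is to treat the matrix one column at a time, using a \emph{single} large scalar $c$ for all columns, so that $cM'$ is a genuine scalar multiple of $M'$. Since every column of $M$ is a probability vector, I would look for a positive integer $c$ and, for each column $a=(a_1,\dots,a_p)$, a $p$-tuple of positive odd integers $(n_1,\dots,n_p)$ with $\sum_i n_i=c$ and $|n_i/c-a_i|<\epsilon$. Setting $M'\defeq\tfrac1c N$, where $N$ is the matrix with these columns, then forces all column sums of $M'$ to equal $1$, makes all entries of $M'$ positive, and makes $cM'=N$ a matrix with odd integer entries. The one global constraint to respect is parity: a sum of $p$ odd integers is $\equiv p\pmod 2$, so since every column sum of $N$ must equal $c$, I must take $c\equiv p\pmod 2$. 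I would also take $c$ enormous, say $c>(p+1)/\epsilon$ and $c>p^2+p$.

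For a fixed column $a$, I would first assign preliminary values. Because $ca_i\ge 0$ and consecutive odd integers differ by $2$, there is an odd integer $n_i\ge 1$ with $|n_i-ca_i|\le 1$; one simply takes $n_i=1$ whenever $ca_i\le 1$ (which in particular covers every entry with $a_i=0$, giving positivity there) and the nearest positive odd integer otherwise. Since each $n_i$ is within $1$ of $ca_i$ and $\sum_i a_i=1$, the preliminary sum satisfies $\bigl|\sum_i n_i-c\bigr|\le p$; and as a sum of $p$ odd numbers it is $\equiv p\equiv c\pmod 2$, so the discrepancy $\delta\defeq\sum_i n_i-c$ is an \emph{even} integer with $|\delta|\le p$.

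To correct the sum exactly I would dump the entire even discrepancy onto one coordinate. If $\delta<0$ I add $|\delta|$ to any coordinate; if $\delta>0$ I subtract $\delta$ from a coordinate whose value exceeds $p$. Such a large coordinate exists when $\delta>0$, for if every $n_i\le p$ then $\sum_i n_i\le p^2$, contradicting $\sum_i n_i\ge c-p>p^2$. In either case the modified coordinate changes by an even amount of size at most $p$, which is less than its value, so it remains a positive odd integer; the new column sum is exactly $c$; and the only coordinate whose approximation degrades is the modified one, with new error at most $1+|\delta|\le 1+p$. Hence $|n_i/c-a_i|\le(1+p)/c<\epsilon$ for every $i$. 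Carrying this out for each column yields $N$ and $M'=\tfrac1c N$ with all three required properties.

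The only real subtlety, and the reason the hypotheses are shaped as they are, is the parity bookkeeping: with a common scalar $c$ the column sums of $cM'$ are all forced to equal $c$, and $p$ odd entries can sum to $c$ only when $c\equiv p\pmod 2$. Once that congruence is imposed the rounding discrepancy is automatically even and can be absorbed by an even change to a single entry; and because $c$ is taken far larger than $p/\epsilon$, this crude single-coordinate correction costs error at most $(1+p)/c\ll\epsilon$, so no delicate apportionment is needed. I expect the only point requiring the explicit lower bound $c>p^2+p$ to be the positivity guarantee, namely the existence of a coordinate large enough to absorb a positive discrepancy without becoming nonpositive.
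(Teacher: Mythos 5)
Your proof is correct and follows essentially the same strategy as the paper's: scale by a large integer whose parity matches $p$, round entries to nearby positive odd integers, and absorb the resulting even discrepancy into a single entry large enough to stay positive (the paper uses the largest entry of each column, which is at least $K$ after scaling by $pK$ with $K$ odd; you locate a suitable entry by pigeonhole). The differences are purely in bookkeeping, and your error estimate $(1+p)/c<\epsilon$ plays the same role as the paper's bound of $(p-1)/(pK)<\epsilon$.
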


  \begin{proof}
    If $p=1$ there is nothing to do, so we assume $p\geq 2$. Let
    $K>\max\{p,\frac 1{\epsilon}\}$ be an odd integer. The matrix
    $pKM$ has column sums $pK$. The largest entry in each column is
    $\geq K$. In each column, replace the smaller $p-1$ entries by nearest odd
    positive integers, and replace the largest entry by the integer
    that keeps the column sum $pK$. This last entry is then odd and
    positive and differs from the original entry by $\leq p-1$. After
    dividing by $pK$ we get the desired matrix $M'$.
  \end{proof}

  \begin{prop}
  \label{prop:oddmatrixlim}
    Every Choquet simplex $\Delta$ is affinely homeomorphic to the
    limit of an inverse system $(\Delta_n,g_n)_{n=1}^\infty$ where: $\Delta_n$ is a finite-dimensional simplex, $g_n:\Delta_{n+1}\to \Delta_n$ is affine, and, choosing orderings for the vertices of $\Delta_n$, the matrix representative for $g_n$ in barycentric coordinates has a scalar multiple with positive odd entries.

  \end{prop}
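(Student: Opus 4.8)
The plan is to start from the Lazar--Lindenstrauss inverse system of Theorem \ref{thm:llinverselims} and perturb its bonding maps into maps whose matrices carry the desired odd-integer structure, controlling the size of the perturbation with Brown's approximation theorem (Theorem \ref{thm:brownapprox}). Concretely, Theorem \ref{thm:llinverselims} gives finite-dimensional simplices $\Delta_n$ and surjective affine maps $f_n:\Delta_{n+1}\to\Delta_n$ with $\Delta\cong\varprojlim(\Delta_n,f_n)$. After fixing an ordering of the vertices of each $\Delta_n$, the map $f_n$ is represented by a matrix $M_n$ with non-negative entries and all column sums equal to $1$. I would keep the \emph{same} sequence of simplices $\Delta_n$ throughout and replace the maps $f_n$ by new affine maps $g_n$ whose matrices $M_n'$ still have column sums $1$ but additionally admit a scalar multiple with positive odd integer entries; such $M_n'$ exist arbitrarily close entrywise to $M_n$ by Lemma \ref{matrices}.

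The one genuine constraint is that Brown's theorem only guarantees $\varprojlim(\Delta_n,g_n)\cong\varprojlim(\Delta_n,f_n)$ when $D(f_i,g_i)<L(g_1,\ldots,g_{i-1})$, where the permissible error at stage $i$ depends on the maps $g_1,\ldots,g_{i-1}$ already chosen. This forces an inductive construction rather than choosing all the $g_n$ at once. So I would build the $g_n$ one at a time: having chosen $g_1,\ldots,g_{n-1}$, I compute the Brown constant $\epsilon_n:=L(g_1,\ldots,g_{n-1})>0$, apply Lemma \ref{matrices} to $M_n$ with error bound $\epsilon_n$ to produce $M_n'$, and let $g_n$ be the affine map it represents. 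Lemma \ref{lem:linfinitydist} converts the entrywise bound into a bound on maps: since the columns of $M_n$ and $M_n'$ are exactly the barycentric coordinates of $f_n(v_i)$ and $g_n(v_i)$, an entrywise difference $<\epsilon_n$ gives $d_\infty(f_n(v_i),g_n(v_i))<\epsilon_n$ at every vertex $v_i$, hence $D(f_n,g_n)<\epsilon_n=L(g_1,\ldots,g_{n-1})$, which is exactly the hypothesis required by Theorem \ref{thm:brownapprox}.

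Having arranged this for every $n$, Brown's theorem yields a homeomorphism $F=(F_1,F_2,\ldots):\varprojlim(\Delta_n,f_n)\to\varprojlim(\Delta_n,g_n)$. To finish I need this homeomorphism to be \emph{affine}, so that the target is affinely, not merely topologically, homeomorphic to $\Delta$. This is automatic from the formula $F_i=\lim_{j\to\infty}g_{ij}\circ f_{j\infty}$: each $g_{ij}$ is a composition of the affine maps $g_i,\ldots,g_{j-1}$ and $f_{j\infty}$ is the (affine) projection of the inverse limit onto $\Delta_j$, so each $g_{ij}\circ f_{j\infty}$ is affine, and affineness is preserved under the pointwise limit defining $F_i$. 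Thus every $F_i$, and hence $F$, is affine. Combining with $\Delta\cong\varprojlim(\Delta_n,f_n)$ gives $\Delta$ affinely homeomorphic to $\varprojlim(\Delta_n,g_n)$, and by construction each $g_n$ is represented in barycentric coordinates by a matrix with a positive odd-integer scalar multiple, as required.

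The main obstacle, and the reason the argument is not a one-line assembly of the three cited lemmas, is the interlocking dependence in the error estimate: the allowable error $L(g_1,\ldots,g_{i-1})$ at each stage is governed by the maps chosen at earlier stages, so one cannot first fix all the perturbations and then check closeness. The inductive bookkeeping---choose $g_1,\ldots,g_{n-1}$, read off $\epsilon_n$, then perturb $M_n$ within $\epsilon_n$---is what makes the three ingredients fit together, with Lemma \ref{matrices} being exactly strong enough to meet an arbitrary prescribed error while simultaneously producing the odd-integer structure.
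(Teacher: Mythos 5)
Your proposal is correct and follows essentially the same route as the paper's proof: apply Theorem \ref{thm:llinverselims}, inductively perturb each $f_n$ via Lemma \ref{matrices} within the Brown constant $L(g_1,\ldots,g_{n-1})$, pass from the vertex estimate to $D(f_n,g_n)$ via Lemma \ref{lem:linfinitydist}, and observe that the resulting homeomorphism from Theorem \ref{thm:brownapprox} is affine because each $F_i$ is a pointwise limit of affine maps. The inductive bookkeeping you highlight is exactly the point the paper's proof relies on.
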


  \begin{proof}
    Represent $\Delta$ as the inverse limit of $(\Delta_n,f_n)$ given by Theorem
    \ref{thm:llinverselims}. Using Lemma \ref{matrices} with $\epsilon=L(g_1,\ldots,g_{n-1})$, inductively approximate each
    bonding map $f_n:\Delta_{n+1}\to\Delta_n$ by an affine map
    $g_n:\Delta_{n+1}\to\Delta_n$ so that the conditions of Theorem
    \ref{thm:brownapprox} are satisfied. Namely, Lemma \ref{matrices} gives $d_\infty(f_n(v),g_n(v))<L(g_1,\ldots,g_{n-1})$
    at the vertices
	and therefore $D(f_n,g_n)<L(g_1,\ldots,g_{n-1})$ by Lemma
    \ref{lem:linfinitydist}. Let $\Delta'$ be the inverse limit of $(\Delta_n,g_n)$. It is a Choquet simplex by \cite[Theorem 13]{tensor_prod}.
    The map $F:\Delta\to\Delta'$ from the conclusion of Theorem
    \ref{thm:brownapprox} is an affine homeomorphism.
    This follows, since each $F_n$ is the limit of $g_{nm}\circ f_{m\infty}$, both maps in the composition are affine, and a limit of affine maps is affine.
  \end{proof}

The proof of Theorem \ref{mainthm:choquetrealization} now follows quickly:

\begin{proof}[Proof of Theorem \ref{mainthm:choquetrealization}]
Let $\Delta$ be a compact metrizable Choquet simplex. By Proposition \ref{prop:oddmatrixlim} we may represent $\Delta$ as an inverse limit of \[\Delta_1 \xleftarrow{f_1} \Delta_2 \xleftarrow{f_2} \Delta_3 \xleftarrow{f_3} \ldots\] where $f_n$ is represented in barycentric coordinates by a matrix which has a scalar multiple whose entries are positive odd integers. Choose $K_n$ a scalar such that $K_n f_n$ has positive odd integer entries. Let $D_n$ be the dimension of $\Delta_n$. We will construct a sequence of punctured disks $X_n$ together with systems of arcs $A_n$, such that $\{A_n\}_{n=1}^\infty$ is directed and such that the transition map $\pi_n:C(X_{n+1})\to C(X_n)$ is exactly $K_n f_n$. To do this, we choose $X_1$ to be any punctured disk containing $D_1+1$ pairwise disjoint, homotopically distinct, homotopically non-trivial arcs forming the system $A_1$. Given the pair $(X_n,A_n)$ for any $n$, we may use Lemma \ref{lem:oddmatrixrealization} to construct a punctured disk $X_{n+1}$ containing $X_n$ and a system of arcs $A_{n+1}$ on $X_{n+1}$ for which the transition map $\pi_n:C(X_{n+1})\to C(X_n)$ is exactly $K_n f_n$.

Denote $C_n=C(X_n)$. Now, we choose as a base $\Sigma_1$ for the cone $C_1=\R_+^{D_1+1}$ the convex hull of the standard basis vectors $e_i=(0,\ldots,0,1,0,\ldots,0)$. We choose as a base for $C_n=\R_+^{D_n+1}$ the inverse image $\Sigma_n\vcentcolon=\pi_{1n}^{-1}(\Sigma_1)$. Since the column sums of $\pi_n$ are $K_n$, $\Sigma_n$ is the convex hull of the multiples $\frac{1}{K_1\cdots K_n}e_i$ and the induced map $\Sigma_{n+1}\to \Sigma_n$ is exactly $f_n$, in barycentric coordinates. Thus the inverse limit of $(\Sigma_n,\pi_n)_{n=1}^\infty$ is affinely homeomorphic to $\Delta$.

To finish the proof, for an arbitrary infinite type hyperbolic surface $X$ of the first kind, we need to produce an example of a minimal lamination $\Lambda$ on $X$ whose cone of transverse measures has a base affinely homeomorphic to $\Delta$. First consider the case that $X$ is the flute surface (genus zero with countably many punctures, exactly one of which is non-isolated). Note that such a hyperbolic metric of the first kind $X$ on the flute surface exists, e.g. by \cite[Theorem 4]{basmajian}. We may choose an exhaustion $Y_1\subset Y_2\subset Y_3\subset \ldots$ where $Y_n$ is a disk with the same number of punctures as $X_n$ for each $n$. By choosing a homeomorphism of $X_1$ with $Y_1$, we may embed $X_1$ in $X$. By choosing a homeomorphism from the annulus $Y_2\setminus Y_1$ to the annulus $X_2\setminus X_1$, we may extend the embedding of $X_1$ to an embedding of $X_2$. Continue this process inductively. Thus, $X_1\subset X_2\subset \ldots$ is identified with the exhaustion $Y_1\subset Y_2\subset \ldots$ and the directed system of arcs $\{A_n\}_{n=1}^\infty$ pushes forward to a directed system of arcs on $X$. Thus, we may assume that $X_1\subset X_2\subset \ldots$ is an exhaustion of $X$ and that $\{A_n\}_{n=1}^\infty$ is a directed system of collections of arcs on the subsurfaces $X_n$ of $X$. Let $\Lambda\subset X$ be the inverse limit lamination of $\{A_n\}_{n=1}^\infty$. Then $\mathcal M(\Lambda)$ is affinely homeomorphic to the limit of the inverse system $C_1 \longleftarrow C_2 \longleftarrow C_3 \longleftarrow \ldots$ described earlier. Hence $\mathcal M(\Lambda)$ has a base affinely homeomorphic to $\Delta$. By Proposition \ref{prop:inverselimminimal}, $\Lambda$ is minimal.

Finally, we consider the case of an arbitrary $X$. Replace the isolated punctures of the flute surface $Y$ by boundary components. There is a topological embedding of $Y$ into $X$ (see e.g. \cite[Lemma 3.2]{triangulation}). The exhaustion of $Y$ described in the last paragraph pushes forward to an exhaustion $Y_1\subset Y_2\subset \ldots$ of the subsurface $Y$ in $X$. This also embeds the collections of arcs $\{A_n\}_{n=1}^\infty$ as collections of arcs on the subsurfaces $Y_n$ of $X$. Extend the exhaustion $Y_1\subset Y_2\subset \ldots$ of $Y$ to an exhaustion of $X$ as follows. The surface $X$ is the union of $Y$ with at most countably many pairwise disjoint subsurfaces $Z^1,Z^2,\ldots$ meeting $Y$ only along their boundary components. Choose an exhaustion $Z_1^i\subset Z_2^i \subset \ldots$ of each $Z^i$ by punctured compact subsurfaces. Then we define an exhaustion $X_1\subset X_2\subset \ldots$ by taking $X_n$ to be the union of $Y_n$ with $Z_n^i$ for all $i$ such that $Z_n^i$ meets $Y_n$ (along the boundary). One may verify that this does define an exhaustion of $X$ by punctured compact subsurfaces and by \cite[Proposition 3.1]{complete} we may assume the $X_n$ have geodesic boundary (after a homotopy). Then for each $n$, $A_n$ is a finite collection of arcs on $X_n$ and $\{A_n\}_{n=1}^\infty$ is directed. Taking the inverse limit lamination $\Lambda$ of $\{A_n\}_{n=1}^\infty$, we see that the cones $C_n=C(X_n)$ are unchanged, as are the transition maps $C_{n+1}\to C_n$. Hence, $\mathcal M(\Lambda)$ has a base affinely homeomorphic to $\Delta$ and $\Lambda$ is minimal, as desired.
\end{proof}

\section{Cones of measures without Choquet simplex bases}
\label{sec:nocompactbase}

In the case that there is no compact subsurface intersecting every leaf of $\Lambda$, the cone of transverse measures $\mathcal M(\Lambda)$ may be badly behaved. In fact, the cone may have no compact base or no non-zero elements at all. We explore examples of these properties in this section.

\subsection{A lamination with no non-zero transverse measures}
\label{sec:zeromeasure}

In this subsection we construct a lamination with no non-zero transverse measures. The idea is to construct an inverse limit lamination with the following properties. Consider an exhaustion $X_1\subset X_2 \subset\ldots$ by punctured compact subsurfaces together with a system of arcs $\{A_n\}_{n=1}^\infty$ which is directed. Going from $(X_{n+1},A_{n+1})$ to $(X_{n+2},A_{n+2})$, most of the traversals of arcs of $A_{n+1}$ by arcs of $A_{n+2}$ are on the arc of $A_{n+1}$ which is disjoint from $X_n$. This effect is compounded at later stages so that a huge majority of the transversals of arcs of $A_{n+1}$ by arcs of $A_{n+k}$ for $k\gg 0$ are on the arc of $A_{n+1}$ disjoint from $X_n$. This will force any transverse measure to assign measure 0 to any leaf of the inverse limit lamination $\Lambda$ which intersects $X_n$ (for any $n$).

We will construct nested punctured disks $X_1\subset X_2\subset \ldots$ together with systems of arcs $A_n$ such that $\{A_n\}_{n=1}^\infty$ is directed and such that the inverse system $C(X_1) \longleftarrow C(X_2) \longleftarrow \ldots$ is exactly \[\R_+\xleftarrow{\pi_1 } \R_+^2 \xleftarrow{\pi_2 } \R_+^3 \xleftarrow{ \pi_3  } \ldots \tag{\textasteriskcentered \textasteriskcentered \textasteriskcentered} \label{eq:measurezero} \text{ where }\] \[\pi_1=\begin{pmatrix} 1 & 0 \end{pmatrix}, \ \ \ \pi_2= \begin{pmatrix} 1 & 0 & 0 \\ 2 & 1 & 0 \end{pmatrix}, \ \ \ \pi_3= \begin{pmatrix} 1 & 0 & 0 & 0 \\ 2& 1 & 0 & 0 \\ 2 & 2 & 1 & 0 \end{pmatrix}, \ \ \ \pi_4= \begin{pmatrix} 1 & 0 & 0 & 0 & 0 \\ 2& 1 & 0 & 0 & 0 \\ 2 & 2 & 1 & 0 & 0 \\ 2 & 2 &2 & 1 & 0 \end{pmatrix}, \ \ \ \ldots.\] The construction of the disks $X_n$ and arcs $A_n$ is analogous to that of Lemma \ref{lem:oddmatrixrealization} and Theorem \ref{mainthm:choquetrealization}.

We start by defining $X_1$ to be a punctured disk containing a single homotopically non-trivial arc, which forms the collection $A_1$. Suppose that $X_1,\ldots,X_n$ and $A_1,\ldots,A_n$ have been constructed such that the transition map from $A_{i+1}$ to $A_i$ is exactly $\pi_i$ for each $i<n$ and such that the dual tree to $A_i$ is an interval $[0,1]$ for $i\leq n$. We wish to construct $X_{n+1}$ and $A_{n+1}$ such that $\{A_i\}_{i=1}^{n+1}$ is directed and the transition map from $A_{n+1}$ to $A_n$ is $\pi_n$. Embed the dual interval $T_n$ to $A_n$ in $X_n$ in such a way that the vertices of $T_n$ lie in the regions of $X_n \setminus A_n$ that they represent and an edge of $T_n$ intersects $A_n$ in exactly one point, which lies on the arc that it represents. Denote by $\ell_1^n,\ldots,\ell_n^n$ the arcs of $A_n$. Order the vertices $v_0,v_1,\ldots,v_n$ of $T_n$ and edges $e_1,\ldots,e_n$ of $T_n$ such that $e_i$ joins $v_{i-1}$ to $v_i$ for each $i$ and $e_i$ represents $\ell_i^n$. See Figure \ref{fig:measurezerointerval}. Sub-divide $e_i$ into $2i-1$ edges for $1\leq i\leq n$. Label the vertices of $e_i$, in order from $v_{i-1}$ to $v_i$, by \[i, i-1, i-2,\ldots,2, 1,2,\ldots,i-1,i,i+1.\] We form a larger disk by gluing a closed annulus to the boundary component of $X_n$. On $X_n$ we place one arc $\ell_i^{n+1}$ for each $1\leq i\leq n+1$ which crosses through all the edges of $T_n$ with endpoints labeled by $i,i+1$. The arc $\ell_i^{n+1}$ will alternate directions through $X_n$ each time it crosses it, and whenever it passes through a sub-edge of $e_j$, it will traverse an arc homotopic to $\ell_j^n$. See Figure\ref{fig:measurezero}. The $\ell_1^{n+1},\ldots,\ell_n^{n+1}$ may be constructed such that $\ell_j^{n+1}$ separates $\ell_{j-1}^{n+1}$ from $\ell_{j+1}^{n+1}$. We additionally construct one more arc $\ell_{n+1}^{n+1}$ on $X_{n+1}$ which is disjoint from $X_n$ and is separated from $\ell_{n-1}^{n+1}$ by $\ell_n^{n+1}$. We set $A_{n+1}=\{\ell_i^{n+1}\}_{i=1}^{n+1}$ and place a puncture in each component of $(X_{n+1}\setminus X_n)\setminus A_{n+1}$. The arcs of $A_{n+1}$ are then disjoint, homotopically non-trivial, and homotopically distinct, the dual tree to $A_{n+1}$ is an interval, and the transition map is exactly $\pi_n$, as desired.

\begin{figure}[h]
\centering

\begin{subfigure}[t]{\textwidth}

\centering
\def\svgwidth{0.55\textwidth}
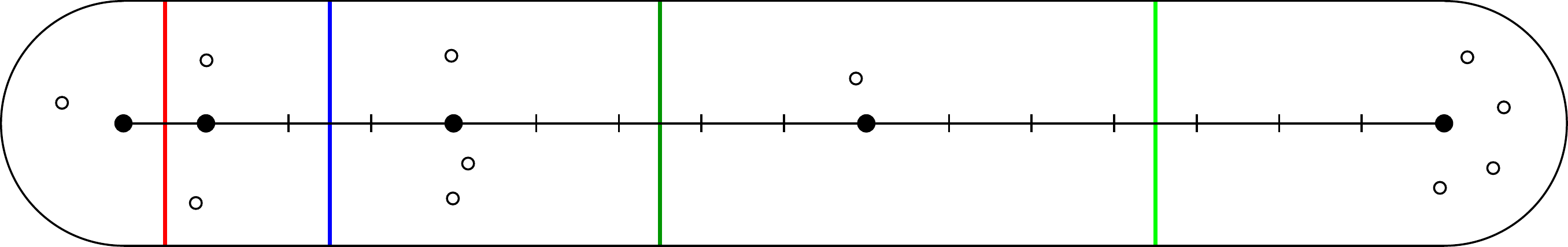

\caption{Realizing the transition matrix $\pi_4$. The dual interval $T_4$ is embedded in $X_4$ and its edges are sub-divided.}
\label{fig:measurezerointerval}
\end{subfigure}

\begin{subfigure}[t]{\textwidth}

\centering
\def\svgwidth{0.55\textwidth}
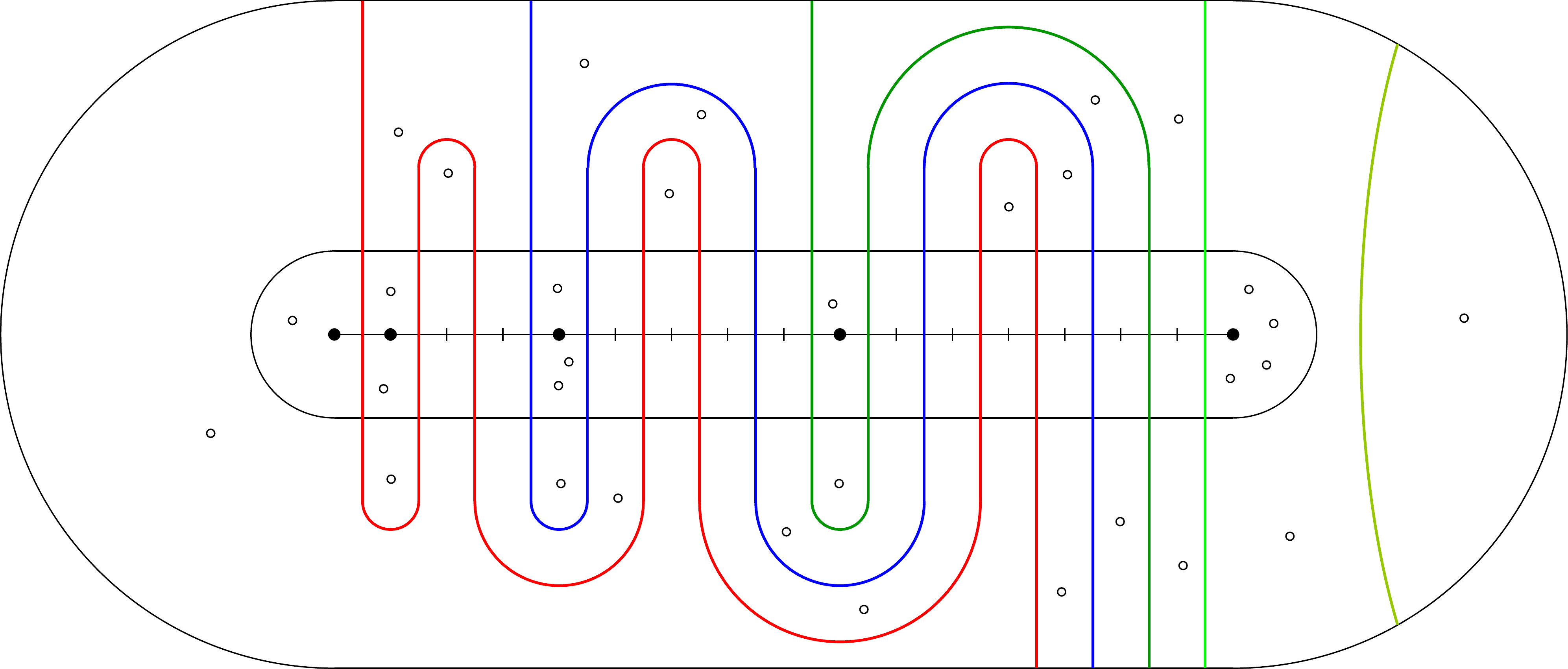

\caption{The arcs $A_5$ on $X_5$. The arc $\ell_j^{n+1}$ crosses through the edges labeled $[j,j+1]$ for $1\leq j\leq n$, while $\ell_{n+1}^{n+1}$ is disjoint from $X_n$.}
\label{fig:measurezero}
\end{subfigure}

\caption{Constructing $(X_5,A_5)$ inductively from $(X_4,A_4)$ and the transition matrix $\pi_4$.}
\end{figure}

Now we can prove Theorem \ref{mainthm:zeromeasure} from the introduction:

\begin{proof}[Proof of Theorem \ref{mainthm:zeromeasure}]

Set $\Lambda$ to be the inverse limit lamination of $\{A_n\}_{n=1}^\infty$. The cone $\mathcal M(\Lambda)$ is the limit of the inverse system (\ref{eq:measurezero}). The transition matrix $\pi_{nm}=\pi_n \circ \pi_{n+1} \circ \cdots \circ \pi_{m-1}$ for $m\geq n$ may be determined as follows. Set $i=m-n$. Then \[\pi_{nm}=\begin{pmatrix} 1 & 0 & 0 & \ldots & 0 & 0 & \ldots & 0 \\ a_1^i & 1 & 0 & \ldots & 0 & 0 & \ldots & 0 \\ a_2^i & a_1^i & 1 & \ldots & 0 & 0 & \ldots & 0 \\ a_3^i & a_2^i & a_1^i & \ldots & 0 & 0 & \ldots & 0 \\  \vdots & \vdots & \vdots & \ddots & \vdots & \vdots & \ddots & \vdots \\ a_{n-1}^i & a_{n-2}^i & a_{n-3}^i & \ldots & 1 & 0 & \ldots & 0\end{pmatrix}\] where $\pi_{nm}$ is $n\times m$, there are $m-n$ columns of zeroes on the right, and $a_j^i$ is the $(j,1)$-entry of the $i^{\text{th}}$ power of the infinite matrix \[A \vcentcolon = \begin{pmatrix} 1 & 0 & 0 & 0 & \ldots \\ 2 & 1 & 0 & 0 & \ldots \\ 2 & 2 & 1 & 0 & \ldots \\ 2 & 2 & 2 & 1 & \ldots \\ \vdots & \vdots & \vdots & \vdots & \ddots \end{pmatrix}.\]
We claim that $a_j^i$ is a polynomial in the variable $i$ of degree $j$. To see this, write $A=I+T$ where $I$ is an infinite identity matrix and $T=A-I$ is lower triangular. Then $A^i = \sum_{k=0}^i \binom{i}{k} T^k$ and by examining the powers $T^k$ we see that $a_j^i = c_0 \binom{i}{0} + c_1 \binom{i}{1} + \ldots + c_j \binom{i}{j}$ where $c_k$ is a fixed (positive) entry of $T^k$. Since $\binom{i}{k}$ is a polynomial in $i$ of degree $k$, the claim follows.


Hence for $j'>j$ we have $\frac{a_j^i}{a_{j'}^i}=\frac{P_j(i)}{P_{j'}(i)}\to 0 \text{ as } i\to \infty$. Fixing $n$, we therefore have that the non-zero columns of $\pi_{nm}$ converge projectively to $(0,0,\ldots,0,1)\in \R_+^n$ as $m\to \infty$. Thus, the intersections of the images of the maps $\pi_{nm}$ in $\R_+^n$ are contained in the sub-cone $0^{n-1} \times \R_+=\{(0,0,\ldots,0,x) : x\geq 0\}$. So the inverse limit $\mathcal M(\Lambda)$ coincides with the limit of the inverse system \[\R_+ \xleftarrow{\pi_1 } 0 \times \R_+ \xleftarrow{\pi_2 } 0^2 \times \R_+ \xleftarrow{\pi_3 } 0^3 \times \R_+\xleftarrow{\pi_4 } \ldots.\] However, the map $\pi_n$ restricted to the cone $0^n \times \R_+$ is just 0. So the inverse limit $\mathcal M(\Lambda)$ is 0. As in Theorem \ref{mainthm:choquetrealization}, the lamination $\Lambda$ may be realized on any infinite type surface.
\end{proof}

We mention an alternative way to prove that $\mathcal M(\Lambda)=0$. Construct $\Lambda$ as above. By studying the construction, one may notice that $\Lambda$ consists of a countable collection $L_1,L_2,\ldots$ of leaves. The leaf $L_i$ accumulates onto $L_{i+1}$ for each $i$ and $L_1$ is isolated, $L_2$ becomes isolated after removing $L_1$, $L_3$ becomes isolated after removing $L_2$, and so on. Since $L_1$ is isolated but accumulates onto $L_2$, it lies outside the support of any measure. The same holds inductively for $L_i$, by removing $L_1,\ldots,L_{i-1}$. Thus $\mathcal M(\Lambda)=0$.

\subsection{Laminations without compact bases}
\label{sec:nobase}

In this section we give a couple of examples of laminations $\Lambda$ which support non-zero transverse measures but for which the cones $\mathcal M(\Lambda)$ nonetheless lack compact bases. Necessarily, each such lamination $\Lambda$ contains sub-laminations disjoint from any given finite type subsurface. In particular, $\Lambda$ cannot be a minimal lamination in any of these cases.

\begin{ex}
Consider the lamination $\Lambda$ of Example \ref{ex:nobase1}. The cone $\mathcal M(\Lambda)\cong \R_+^\N$ has no compact base. For if $B$ were a base then $B$ would contain a multiple of $e_i=(0,\ldots,0,1,0\ldots)$ for each $i\geq 1$ (where $e_i$ has a 1 in entry $i$). Thus, $a_ie_i\in B$ for some $a_i>0$. We see that $a_ie_i\to 0$ as $i\to \infty$, regardless of the values of $a_i$. Since 0 does not lie in $B$, it cannot be compact.
\end{ex}

\begin{ex}
Consider the lamination $\Lambda$ on the infinite type surface $X$ in Figure \ref{fig:nobase2}. Thus, $X$ has a Cantor set of ends, all accumulated by genus. The lamination $\Lambda$ consists of countably many isolated simple closed curves $\Gamma_1,\Gamma_2,\ldots$ plus countably many proper leaves $L_1,L_2,\ldots$ such that $\Gamma_j$ converges to the union $\bigcup_{i=1}^\infty L_i$ as $j\to \infty$. Using an appropriate exhaustion of $X$ shows that $\mathcal M(\Lambda)$ is affinely homeomorphic to the cone $C\subset \ell^1 \times \R_+^{\mathbb{N}}$ defined by \[C=\left\{\left((y_i)_{i=1}^\infty,x_1,x_2,\ldots\right) \in \ell^1 \times \R_+^{\mathbb{N}} : y_i\geq 0 \text{ for all } i \text{ and } x_j \geq \sum y_i \text{ for all } j\right\}.\] Here $\ell^1$ is endowed with its weak${}^*$ topology as the dual of $c_0$, $\ell^1\times \R_+^{\mathbb{N}}$ with its product topology, and $C$ with the subspace topology. The $\ell^1$ coordinates $y_i$ of the cone correspond to weights on the simple closed curves $\Gamma_i$ while the coordinates $x_i$ correspond to weights on the proper leaves $L_i$. As in the last example, $C$ has no compact base.
\end{ex}

\begin{figure}[h]
\centering
\def\svgwidth{0.5\textwidth}
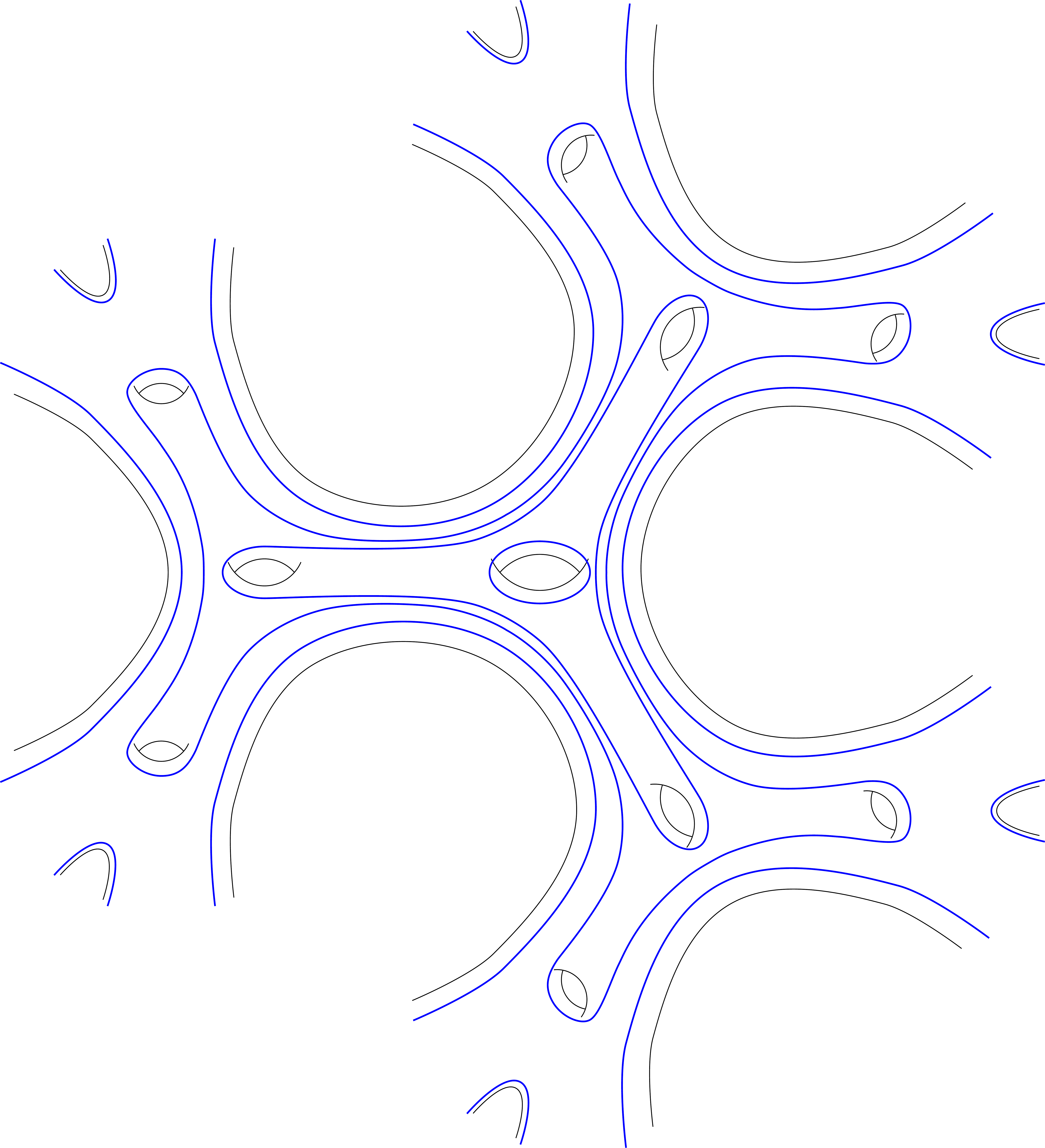

\caption{A lamination on the surface with a Cantor set of ends accumulated by genus, consisting of countably many isolated simple closed curves which limit to a countable union of proper leaves.}
\label{fig:nobase2}
\end{figure}

\bibliographystyle{plain}
\bibliography{transverse}

{\normalsize
\noindent \textbf{M. Bestvina.} Department of Mathematics, University of Utah, Salt Lake City, UT 84112. \\
E-mail: \href{mailto:bestvina@math.utah.edu}{bestvina@math.utah.edu}

\bigskip

\noindent \textbf{A. J. Rasmussen.} Department of Mathematics, University of Utah, Salt Lake City, UT 84112. \\
E-mail: \href{mailto:rasmussen@math.utah.edu}{rasmussen@math.utah.edu}

\end{document}